\documentclass{amsart}
\usepackage{amsmath,amscd,amssymb,combelow,color,enumitem,graphicx,float,comment}

\newtheorem{theorem}{Theorem}[section]

\newtheorem{proposition}[theorem]{Proposition}
\newtheorem{lemma}[theorem]{Lemma}
\newtheorem{corollary}[theorem]{Corollary}
\newtheorem{conjecture}[theorem]{Conjecture}
\newtheorem{definition}[theorem]{Definition}
\newtheorem{claim}[theorem]{Claim}
\newtheorem{example}[theorem]{Example}
\newtheorem{remark}[theorem]{Remark}

\def\fb{{\mathfrak{b}}}
\def\fg{{\mathfrak{g}}}
\def\fh{{\mathfrak{h}}}

\def\fn{{\mathfrak{n}}}

\def\fgl{{\mathfrak{gl}}}

\def\hgl{{\widehat{\fgl}}}

\def\BC{{\mathbb{C}}}

\def\BN{{\mathbb{N}}}

\def\BZ{{\mathbb{Z}}}

\def\CA{{\mathcal{A}}}
\def\CB{{\mathcal{B}}}

\def\CO{{\mathcal{O}}}

\def\CS{{\mathcal{S}}}

\def\CV{{\mathcal{V}}}

\def\ph{\varphi}

\def\sym{\textrm{sym}}

\def\UU{U_q(L\fg)}
\def\UUp{U_q^+(L\fg)}
\def\UUpm{U^\pm_q(L\fg)}

\def\UUm{U_q^-(L\fg)}
\def\UUg{U_q^\geq(L\fg)}
\def\UUl{U_q^\leq(L\fg)}

\def\tUU{\widetilde{U}_q(L\fg)}
\def\tUUp{\widetilde{U}_q^+(L\fg)}
\def\tUUpm{\widetilde{U}^\pm_q(L\fg)}

\def\tUUm{\widetilde{U}_q^-(L\fg)}
\def\tUUg{\widetilde{U}_q^\geq(L\fg)}
\def\tUUl{\widetilde{U}_q^\leq(L\fg)}

\def\ba{{\mathbf{a}}}
\def\bb{{\mathbf{b}}}

\def\br{{\mathbf{r}}}

\def\bs{\boldsymbol{\varsigma}}

\def\cc{{\mathbb{C}^I}}

\def\nn{{\mathbb{N}^I}}
\def\zz{{\mathbb{Z}^I}}

\def\bom{{\boldsymbol{\la}}}

\def\bpsi{{\boldsymbol{\psi}}}

\def\bom{{\boldsymbol{\omega}}}

\def\bm{{\boldsymbol{m}}}
\def\bn{{\boldsymbol{n}}}

\def\bx{{\boldsymbol{x}}}

\def\b0{{\boldsymbol{0}}}
\def\bone{{\boldsymbol{1}}}

\def\loccit{\emph{loc.~cit.~}}
\def\loccitt{\emph{loc.~cit.}}

\def\wI{\widehat{I}}

\def\UUaff{U_q(\widehat{\fg})_{c=1}}

\def\UUaffg{U_q(\widehat{\fb}^+)_{c=1}}

\def\UUsh{U_q(L\fg)^{\mu}}

\def\wI{\widehat{I}}

\def\bx{\boldsymbol{x}}

\def\vac{|\varnothing\rangle}

\def\bone{{\boldsymbol{1}}}

\def\ord{\textbf{ord }}
\def\eord{\textbf{\emph{ord }}}

\def\op{\text{op}}

\def\sh{\text{sh}}
\def\esh{\emph{sh}}

\begin{document}

\title[Borel and shifted category $\CO$]{\Large{\textbf{Borel and shifted category $\CO$}}} 

\author[David Hernandez and Andrei Negu\cb t]{David Hernandez and Andrei Negu\cb t}

\address{Universit\'e Paris Cit\'e, Sorbonne Universit\'e, CNRS, IMJ-PRG, F-75013 Paris, France}

\email{david.hernandez@imj-prg.fr}

\address{École Polytechnique Fédérale de Lausanne (EPFL), Lausanne, Switzerland \newline \text{ } \ \ Simion Stoilow Institute of Mathematics (IMAR), Bucharest, Romania} 

\email{andrei.negut@gmail.com}
	
\begin{abstract} We prove a precise relation between simple modules in the Borel category $\CO$ and the shifted category $\CO$ for a symmetrizable Kac-Moody Lie algebra. \end{abstract}

\maketitle

\bigskip

\tableofcontents

\section{Introduction}
\label{sec:intro}

\medskip

\subsection{The Borel category $\CO$}
\label{sub:borel intro}

Consider a symmetrizable Kac-Moody Lie algebra $\fg$, with a set $I$ of simple roots. We write $(d_{ij})_{i,j \in I}$ for the symmetrized generalized Cartan matrix associated to $\fg$, see \eqref{eqn:cartan matrix}. Fix $q \in \BC^*$ not a root of unity and an enlargement $\fh \supseteq \cc$ of the root lattice as in Remark \ref{rem:enlarge cartan}; the latter is a technical requirement that ensures the non-degeneracy of the bilinear form on the Cartan subalgebra. The representation theory of the quantum loop algebra
\begin{equation}
\label{eqn:quantum loop intro}
\UU = \BC \Big \langle e_{i,d}, f_{i,d}, \ph_{i,d'}^+, \ph_{i,d'}^-, \kappa_{\ba} \Big \rangle_{i \in I, d \in \BZ, d' \geq 0, \ba \in \fh} \Big/ \Big( \text{relations in Def. \ref{def:pre quantum loop}, \ref{def:quantum loop algebra}} \Big)
\end{equation}
has long been studied from various points of view, see for instance \cite{CP, HJ} and the many references in \cite{H25} for $\fg$ of finite type, and \cite{HLMS, N Cat} for $\fg$ of general symmetrizable type. The Borel category $\CO$ (defined by Jimbo and the first-named author) contains simple modules 
\begin{equation}
\label{eqn:borel simple module intro}
\Big(\text{Borel subalgebra of }\UU \Big) \curvearrowright L(\bpsi)
\end{equation}
which are indexed by so-called highest $\ell$-weights \footnote{If we enlarge the Cartan subalgebra as in Remark \ref{rem:enlarge cartan}, then simple modules are indexed by $(\bpsi,\bom)$ as in Remark \ref{rem:enlarge simple} and not merely by $\bpsi$. However, we choose not to include $\bom$ in our notation for brevity.}
\begin{equation}
\label{eqn:ell weight intro}
\bpsi = \left(\psi_i(z) = \sum_{d=0}^{\infty} \frac {\psi_{i,d}}{z^d} \in \BC[[z^{-1}]]^\times \right)_{i \in I}
\end{equation}
which are rational, i.e. each $\psi_i(z)$ is the power series expansion \footnote{We note that our convention is to expand $\ell$-weights in $z^{-1}$ as opposed from the more usual $z$, in order to ensure uniformity between our $z$ and the variables of shuffle algebras, see \eqref{eqn:big shuffle}.} of a rational function  in $z$ regular at $\infty$.

\medskip 

Let us write $\br = \ord \bpsi \in \zz$ for the $I$-tuple that keeps track of the orders of the poles of the rational functions $\psi_i(z)$ at $z=0$, and call it the order of $\bpsi$. Then there is (\cite{N Cat, N Char}) an isomorphism of vector spaces
\begin{equation}
\label{eqn:l decomposition intro}
L(\bpsi) \cong L^{\br} \otimes L^{\neq 0} (\bpsi)
\end{equation}
which underlies the decomposition of the $q$-character (long-known in special cases when  $\fg$ is of finite type \cite{FR, FH, HL Borel, FJMM, H Shifted, FH2}) as
\begin{equation}
\label{eqn:chi decomposition intro}
\chi_q(L(\bpsi)) = \chi^{\br} \cdot \chi_q(L^{\neq 0}(\bpsi))
\end{equation}
The factor $\chi^{\br}$ is an ordinary character (as opposed from a $q$-character) that only depends on $\br = \ord \bpsi$, and it has been computed in \cite{N Char}, in accordance with conjectures of \cite{MY, W1}. In particular, we recover two limit cases that were already known for finite type $\fg$:

\medskip

\begin{itemize} 

\item If $L(\bpsi)$ is a finite-dimensional representation of the entire quantum loop algebra, then 
there is only one factor $\chi_q(L(\bpsi)) = \chi_q(L^{\neq 0}(\bpsi))$ by \cite{FR}. 

\medskip

\item If $\bpsi$ is a polynomial in $z^{-1}$, then $\chi_q(L^{\neq 0}(\bpsi)) = [\bpsi]$ has only one term by \cite{FH}, and so $\chi_q(L(\bpsi)) = \chi^{\br} [\bpsi]$.

\end{itemize} 

 Our main interest is to calculate the second factor in \eqref{eqn:chi decomposition intro} by gaining an understanding of the vector space $L^{\neq 0}(\bpsi)$ itself,  for any highest $\ell$-weight $\bpsi$. We treat general symmetrizable Kac-Moody Lie algebras $\fg$, but our results are also new for $\fg$ of finite type, as we handle all simple modules in the category $\mathcal{O}$.

\medskip

\subsection{The shifted category $\CO^{\sh}$}
\label{sub:shifted intro}

By analogy with the Borel category $\CO$, the first-named author introduced the shifted category $\CO^{\sh}$ consisting of modules of the shifted quantum loop algebra
\begin{equation}
\label{eqn:shifted quantum loop intro}
\UUsh = \BC \Big \langle e_{i,d}, f_{i,d}, \ph_{i,d'}^+,  \ph_{i,d'}^-, \kappa_{\ba} \Big \rangle_{i \in I, d \in \BZ, d' \geq 0,\ba \in \fh} \Big/ \Big( \text{relations in Definition \ref{def:shifted}} \Big)
\end{equation}
While the Borel category $\mathcal{O}$ is well designed to study quantum integrable models thanks to the transfer-matrix construction \cite{FH, FH2}, the shifted category $\mathcal{O}^{\sh}$ fits very well in the framework of cluster categorification \cite{GHL}. In contrast, it is not known how to assign transfer-matrices to any module in $\mathcal{O}^{\sh}$, and there are difficulties to obtain direct cluster categorification from the whole category $\mathcal{O}$ (as tensor products of simple modules in $\mathcal{O}$ are not always of finite length). In this picture, it is thus important to understand the precise relation between the two categories of modules.

\medskip 

The algebra \eqref{eqn:shifted quantum loop intro} was defined in \cite{FT} for any integral coweight
\begin{equation}
\label{eqn:r mu intro}\
\mu = \sum_{i \in I} r_i \omega_i^\vee
\end{equation}
where $\br = (r_i)_{i \in I} \in \zz$. Moreover, for $\fg$ of finite type and for any rational $\ell$-weight $\bpsi$ with $\br = \ord \bpsi$, a simple module
\begin{equation}
\label{eqn:shifted simple module intro}
\UUsh \curvearrowright L^{\sh}(\bpsi)
\end{equation}
was constructed in \cite{H Shifted} (by \loccitt, $\UUsh$ has a non-trivial finite-dimensional module if and only if $\mu$ is codominant). In the present paper, such simple modules will be defined for an arbitrary symmetrizable Kac-Moody Lie algebra $\fg$.

\medskip 

When $\fg$ is of finite type and $L^{\sh}(\bpsi)$ is finite-dimensional, it was quickly recognized in \cite{H Shifted} that $\chi_q(L^{\sh}(\bpsi))$ matches the second factor in the right-hand side of \eqref{eqn:chi decomposition intro}. In the present paper, we establish this fact for a general $L(\bpsi)$ by lifting it from an equality of numbers to an isomorphism of vector spaces.

\medskip

\begin{theorem}
\label{thm:main intro}

For any symmetrizable Kac-Moody Lie algebra $\fg$ and any rational $\ell$-weight $\bpsi$, we have a vector space isomorphism 
\begin{equation}
\label{eqn:main intro}
L^{\neq 0}(\bpsi) \cong L^{\esh}(\bpsi)
\end{equation}
which preserves the natural gradings by $\bn \in \nn$ and $\bx \in (\BC^*)^{\bn}$ on both sides (see \eqref{eqn:decomposition x} and \eqref{eqn:decomposition x shifted}). Therefore, \eqref{eqn:main intro} descends to an equality of $q$-characters 
\begin{equation}
\label{eqn:main intro q-characters}
\chi_q(L^{\neq 0}(\bpsi)) = \chi_q(L^{\esh}(\bpsi)).
\end{equation}

\end{theorem}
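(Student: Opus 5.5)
We plan to prove the isomorphism through the shuffle algebra realization used in \cite{N Cat, N Char}, describing both sides as the same quotient of a shuffle algebra. Recall that $\UUm$ embeds into a big shuffle algebra of symmetric rational functions in variables $z_{i1},\dots,z_{in_i}$. In this model the simple Borel module is
\[
L(\bpsi) \cong \mathcal{S}^-\big/ J(\bpsi),
\]
where $J(\bpsi)$ is the kernel of a pairing determined by $\bpsi$: a shuffle element $R$ of degree $\bn$ is paired against the positive half, and the pairing is built from the product $\prod_{i,a}\psi_i(z_{ia})$ together with the standard $\zeta$-factors. The decomposition \eqref{eqn:l decomposition intro} comes from factoring $\psi_i(z)=z^{-r_i}\cdot\psi_i^{\neq 0}(z)$ near $z=0$. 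The factor $L^{\br}$ records the behavior of shuffle elements at $z_{ia}\to 0$ (the $\bx=0$ part), and $L^{\neq 0}(\bpsi)$ is the summand supported at $\bx\in(\BC^*)^{\bn}$ in the grading \eqref{eqn:decomposition x}. Concretely, $L^{\neq 0}(\bpsi)_{\bn,\bx}$ should be identified with a quotient of the localization (completion) of the shuffle algebra at the point $\bx$, modulo the kernel of the pairing localized there.

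On the shifted side we first construct $L^{\esh}(\bpsi)$ for general $\fg$. The shifted algebra $\UUsh$ has the same negative half $\UUm$ as $\UU$, and its positive half is also unchanged; only the Cartan relations \eqref{eqn:shifted quantum loop intro} are twisted by $\mu$. We therefore define $L^{\esh}(\bpsi)=\UUm\cdot v/J^{\esh}(\bpsi)$, where $J^{\esh}(\bpsi)$ is the radical of a Shapovalov-type pairing. In shuffle terms this pairing is the same integral formula, with $\bpsi$ inserted but with the $z^{r_i}$-shift absorbed. It is therefore given by $\prod\psi_i(z_{ia})$ paired with shuffle elements whose admissible pole or zero order at $z=0$ is modified by $\br$. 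We check simplicity and that this is a highest $\ell$-weight module, indexed by $(\bpsi,\bom)$ as in Remark \ref{rem:enlarge simple}. The $\bx$-grading \eqref{eqn:decomposition x shifted} again comes from the joint generalized eigenvalues of the $\ph_i^\pm$ acting through the variables.

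The core step is to compare, for each $\bn$ and each $\bx\in(\BC^*)^{\bn}$, the two localized quotients. Near a point with all coordinates nonzero, the factor $z^{\pm r_i}$ is an invertible analytic function. The Borel pairing and the shifted pairing therefore differ only by multiplication by a unit in the local ring at $\bx$, namely $\prod_{i,a} z_{ia}^{r_i}$ together with the constant $\kappa$-normalizations. It follows that their radicals coincide after localization, and the graded pieces are isomorphic. Summing over $\bx$ then gives \eqref{eqn:main intro}, and \eqref{eqn:main intro q-characters} follows because $q$-characters only record $\dim$ of each $(\bn,\bx)$ piece.

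The main obstacle is justifying that $L^{\neq 0}(\bpsi)_{\bn,\bx}$ really equals the localization at $\bx$ of the global quotient. This is the statement that the $0$-part and the nonzero part decouple (the tensor factorization \eqref{eqn:l decomposition intro}), and we must show no mixed contributions survive where some variables tend to $0$ and others to $\bx$. We would first try to use the factorization property of the shuffle pairing under splitting variables into two groups (the coproduct/wheel-condition compatibility), and reduce to the already established proof of \eqref{eqn:l decomposition intro} in \cite{N Char}. Extracting from that proof an explicit identification of $L^{\neq 0}$ with the $x\neq0$ localized piece, and running the same argument verbatim on the shifted side, should complete the proof.
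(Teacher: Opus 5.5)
Your proposal has a genuine gap at its core step: you never compute the shifted Shapovalov form. You assert instead that it is ``the same integral formula'' and that it differs from the Borel one by the local unit $\prod_{i,a} z_{ia}^{r_i}$, and neither claim is right as stated.

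\begin{itemize}
\item The radical $J^{\sh}(\bpsi)$ is cut out by $\langle E\prod\psi, F_1 * S(F_2)\rangle = 0$ for \emph{all} $E\in\CS_{\bn}$. There is no slope or pole-order restriction.
\item $\br$ disappears from this form entirely. The $\sigma_{-\br}$ in the shifted double relation cancels the factor $z^{r_i}$ in the eigenvalue of $\ph_i^-(z)$ on $\vac$, so there is no unit to absorb.
\item The form is nontrivial only because $\psi$ is expanded differently in the two coproduct factors. Abstractly, $F_1*S(F_2)$ vanishes for $\bn\neq\b0$.
\end{itemize}

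The decisive computation is Lemma~\ref{lem:antipode}. Pairing against $e_{i_1,d_1}*\dots*e_{i_n,d_n}\prod\psi$ gives the alternating sum over splittings $\{1,\dots,n\}=\{a_\bullet\}\sqcup\{b_\bullet\}$ of integrals over $|z_{a_m}|\ll\dots\ll|z_{a_1}|\ll 1\ll |z_{b_1}|\ll\dots\ll|z_{b_{n-m}}|$. This is exactly the difference-of-circles integral in \eqref{eqn:condition borel}. It yields the global equality $J^{\sh}(\bpsi)=\bar{J}^{\neq 0}(\bpsi)$ inside $\CS^-$, with no localization needed. Combined with Lemma~\ref{lem:iso}, the theorem follows, and the $\bx$-gradings match because the same residue-theorem argument applies verbatim on both sides.

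Without this computation, your ``differ by a unit in the local ring at $\bx$'' step has nothing to act on. To compare two radicals point by point, you must first know that each one is an intersection over $\bx\in(\BC^*)^{\bn}$ of local residue conditions of the same shape, tested against all Laurent monomials. For the shifted side, that is precisely the content of Lemma~\ref{lem:antipode}. For the Borel form $\langle E\prod\psi,S(F)\rangle$ with $E\in\CS_{\geq\b0|\bn}$, it fails: the pole of $\psi$ at $z=0$ contributes, and that contribution is what produces $L^{\br}$. Moreover, that slope-restricted test space is not stable under multiplication by $\prod z_{ia}^{\pm r_i}$, so ``differ by a unit'' would not give ``same radical'' even locally.

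You have also placed the main obstacle on the wrong side. The Borel-side decoupling does not require going through $L(\bpsi)$ and the factorization \eqref{eqn:l decomposition intro}. Lemma~\ref{lem:iso} (testing against $\sigma_{N\bone}(\CS_{\geq\b0|\bn})$ for $N\gg0$), together with \eqref{eqn:condition borel}, already presents $L^{\neq 0}(\bpsi)$ as $\CS^-$ modulo residue conditions at nonzero points. The real work is on the shifted side:
\begin{itemize}
\item showing the shifted form is of the same residue type, which is the missing Lemma~\ref{lem:antipode};
\item proving that $J^{\sh}(\bpsi)\vac$ is the maximal submodule, as in Proposition~\ref{prop:ideal}. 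This uses stability under $\CS^+$ via the shifted double relation, which your ``we check simplicity'' does not address.
\end{itemize}
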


\medskip

The result above not only holds for any symmetrizable Kac-Moody Lie algebra $\fg$, but it is also new for $\fg$ of finite type, as it is established for any simple module $L^{\sh}(\bpsi)$ (in particular $\mu$ is not necessarily codominant). Our methods are also different from the ones in the literature, see for instance \cite{HJ, H Shifted}.

\medskip

The isomorphism of vector spaces \eqref{eqn:main intro} lifts to one of modules for
$$
\UUm = \BC \Big \langle  f_{i,d} \Big \rangle_{i \in I, d \in \BZ} \Big/ \Big( \text{relations} \Big)
$$
which is a common subalgebras to both \eqref{eqn:quantum loop intro} and \eqref{eqn:shifted quantum loop intro}. However, since the the gluing between the positive and negative halves differs between the algebras \eqref{eqn:quantum loop intro} and \eqref{eqn:shifted quantum loop intro}, we cannot upgrade the isomorphism \eqref{eqn:main intro} any further.

\medskip

\subsection{$QQ$-systems}

Another important application of our results is given by the simple modules corresponding to $\tilde{Q}$-variables in $QQ$-systems \cite{FH2}. 

\medskip 

Recall that the ODE/IM correspondence gives a surprising relation between functions associated to Schr\"odinger differential operators and the spectrum of quantum systems called ``quantum KdV". Feigin-Frenkel \cite{FF} have proposed a large generalization of this correspondence in terms of Langlands duality. This open conjecture is a fruitful source of inspiration.
In particular, a remarkable system of relations (the $QQ$-system) was observed to be satisfied
by spectral determinants of certain solutions of affine opers \cite{mrv}. Then, motivated by the general Feigin-Frenkel conjecture, it was proved in 
\cite{FH2} that this $QQ$-system has a solution in the Grothendieck of the Borel category $\mathcal{O}$ (when the underlying Lie algebra is of finite type). The solution is described in terms of simple classes up to multiplicative constants
(the renormalization factors, which are to be computed). 

\medskip

Our results give the precise renormalization factors to write the $QQ$-system in the Grothendieck ring of the Borel category $\mathcal{O}$.
Indeed, a solution of the $QQ$-system exists in the category $\mathcal{O}^{\sh}$ {\it without any renormalization factor} by \cite{H Shifted}.
The relation between representations of shifted quantum loop algebras and Borel algebras that we establish here is the missing piece to compute the renormalization factors.

\medskip 

Note that the $QQ$-systems are closely related to the Bethe Ansatz relations in quantum integrable systems \cite{mrv} and to exchange relations in certain monoidal categorifications of cluster algebras \cite{GHL}. Hence, the precise $QQ$-system in the Grothendieck group of category $\CO$ established in the present paper opens the way to new developments in these directions as well.

\medskip

More generally, we establish a ring isomorphism between the Grothendieck rings of $\mathcal{O}$ and $\mathcal{O}^{\sh}$. This allows to formulate the conjectures in \cite{FH3} on generalized $QQ$-systems in terms of the Borel category $\mathcal{O}$. 

\medskip

With this in mind, one of the main applications of our results is Theorem \ref{thm:qq}: there is an explicit solution of the $QQ$-system in the Borel category $\mathcal{O}$. This result generalizes that of \cite{FH2} from finite type to an arbitrary symmetrizable Kac-Moody Lie algebra $\fg$.

\medskip

\subsection{Shuffle algebras}
\label{sub:shuffle intro}

There are many tasks that go into establishing Theorem \ref{thm:main intro}: defining Borel subalgebras of $\UU$ for arbitrary symmetrizable Kac-Moody Lie algebras $\fg$, explicitly constructing the vector spaces $L(\bpsi)$, $L^{\neq 0}(\bpsi)$ and $L^{\sh}(\bpsi)$ and establishing the coincidence of the latter two. All these tasks can be performed using the techniques of shuffle algebras (\cite{E, FO}). In a nutshell, there is a subspace
$$
\CS ^- \subseteq \CV = \bigoplus_{\bn \in \nn} \frac {\BC [z_{i1}^{\pm 1},\dots,z_{in_i}^{\pm 1} ]^{\sym}_{i \in I}}{\prod^{\text{unordered}}_{i \neq j, a, b} (z_{ia} - z_{jb})}
$$
such that $\CS^- \cong \UUm$, and a subalgebra 
$$
\CS^-_{<\b0} \subset \CS^-
$$
such that $\CS^-_{<\b0} \cong \UUm \cap \UUaffg$ in finite types. This allowed the second-named author to prove the following isomorphisms in \cite{N Cat, N Char}
\begin{align}
L(\bpsi) &= \CS^-_{<\b0} \Big / J(\bpsi) \label{eqn:l intro} \\
L^{\neq 0}(\bpsi) &= \CS^-_{<\b0} \Big / J^{\neq 0}(\bpsi) \label{eqn:l neq intro} 
\end{align}
where $J(\bpsi)$ and $J^{\neq 0}(\bpsi)$ are certain subsets of rational functions that we recall in Subsection \ref{sub:simple}. We also have
\begin{equation}
\label{eqn:l bar intro}
L^{\neq 0}(\bpsi) \cong \CS^- \Big / \bar{J}^{\neq 0}(\bpsi)
\end{equation}
where $\bar{J}^{\neq 0}(\bpsi)$ is defined in Lemma \ref{lem:iso}. Our main technical result is the following

\medskip

\begin{theorem}
\label{thm:technical intro}

For any symmetrizable Kac-Moody Lie algebra $\fg$ and any rational $\ell$-weight $\bpsi$, let $\mu$ and $\br = \eord \bpsi$ be related by \eqref{eqn:r mu intro}. Then
\begin{equation}
\label{eqn:technical intro}
L^{\esh}(\bpsi) = \CS^- \Big / J^{\esh}(\bpsi) 
\end{equation}
(where $J^{\esh}(\bpsi)$ is given in Definition \ref{def:ideal}) is the unique up to isomorphism simple $\UUsh$-module generated by a single vector $\vac$ satisfying relations \eqref{eqn:simple module shifted property}-\eqref{eqn:simple module shifted property 2}.

\end{theorem}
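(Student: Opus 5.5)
The proof will imitate the construction of $L(\bpsi)$ as $\CS^-_{<\b0}/J(\bpsi)$ in \cite{N Cat, N Char}, now working with all of $\CS^- \cong \UUm$ and the shifted gluing. The plan has three steps.

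\textbf{Step 1: the universal module.} Consider the shifted Verma-type module $M^{\sh}(\bpsi) = \UUsh \otimes_{\UUsh^{\geq}} \BC_{\bpsi}$. Here the positive half $e_{i,d}$ acts on the vacuum by zero, and $\ph^\pm_i(z)$ act on it by the expansions of $\psi_i(z)$ at $\infty$ and at $0$. For the action at $0$ I use that the shift $\mu$ with $r_i = \eord \psi_i$ is exactly what makes the expansion at $z=0$ live in the range allowed by the shifted relations of Definition \ref{def:shifted}.

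By the triangular decomposition of $\UUsh$, we get $M^{\sh}(\bpsi) \cong \UUm \cong \CS^-$ as vector spaces. I transport the action to $\CS^-$ explicitly:
\begin{itemize}
\item $f_{i,d}$ acts by shuffle multiplication by $z_{i1}^d$;
\item $\ph^\pm$ acts diagonally, by multiplying by $\prod_{i,a}\psi_i(z)$-type factors times the standard $\zeta$-ratios;
\item $e_{i,d}$ acts by the commutator formula $[e,f] \sim \ph^+ - \ph^-$. This yields a residue-type formula: $e_{i,d}\cdot R = \sum$ (residues at $z=0,\infty$ of $z^d \cdot \psi_i(z)\cdot R(\ldots,z,\ldots)\cdot \zeta$-factors).
\end{itemize}
The key point is that for the shifted algebra both residues, at $0$ and at $\infty$, occur without the truncation used in the Borel case. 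This is why $\CS^-$ appears rather than $\CS^-_{<\b0}$.

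\textbf{Step 2: the maximal submodule.} Simple quotients of $M^{\sh}(\bpsi)$ correspond to maximal proper submodules. The maximal one is the set of vectors $R$ such that no sequence of $e$'s brings $R$ back to a nonzero multiple of $\vac$. Equivalently, $R$ lies in the radical of the Shapovalov-type pairing $\CS^+ \otimes \CS^- \to \BC$ determined by $\bpsi$.

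I then verify that this radical equals $J^{\sh}(\bpsi)$ of Definition \ref{def:ideal}. I expect that definition to state that $R$ lies in $J^{\sh}(\bpsi)$ iff iterated residue/pairing expressions $\langle E, R\rangle_{\bpsi}$ vanish for all $E\in\CS^+$. Given this, three consequences follow:
\begin{itemize}
\item $J^{\sh}(\bpsi)$ is stable under $f$, since the pairing is adjoint to multiplication;
\item it is stable under $e$ and $\ph$ by the formulas of Step 1;
\item it does not contain $\vac$.
\end{itemize}
Hence the quotient $\CS^-/J^{\sh}(\bpsi)$ is simple: any nonzero vector can be raised to $\vac$, and $\vac$ generates everything.

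\textbf{Step 3: uniqueness.} Any simple module generated by a vector satisfying \eqref{eqn:simple module shifted property}-\eqref{eqn:simple module shifted property 2} is a quotient of $M^{\sh}(\bpsi)$. Such a quotient is simple only when the kernel is the unique maximal submodule, which is $J^{\sh}(\bpsi)$. The maximal submodule is unique because it is the sum of all proper submodules. These are graded by $\bn$, and they are proper iff their degree-$\b0$ part vanishes, since $\vac$ spans degree $\b0$.

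\textbf{Main obstacle.} The main obstacle is Step 1 together with its use in Step 2: establishing the triangular decomposition of $\UUsh$ and the exact formula for the $e$-action for a general symmetrizable Kac–Moody $\fg$ with arbitrary $\mu$. In particular, I must check that the pole structure of $\CS^-$ (wheel conditions, and poles only along $z_{ia}=z_{jb}$) is preserved, so that $e_{i,d}\cdot R$ again lies in $\CS^-$. I would first try to deduce the $e$-action from the pairing, as in \cite{N Cat}, and check the shifted relations on generators only.
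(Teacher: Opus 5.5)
Your overall architecture is the same as the paper's: the Verma-type module $W^{\sh}(\bpsi)\cong\CS^-$, the maximal graded submodule as the radical of a Shapovalov-type form, and uniqueness via the degree-$\b0$ part. But the one step that carries real content is missing. You never prove that the radical coincides with $J^{\sh}(\bpsi)$ as given in Definition \ref{def:ideal}; you only guess what that definition says.

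Concretely, the radical is $\{F \in \CS_{-\bn} : E F \vac = 0 \ \forall E \in \CS_{\bn}\}$. This set is automatically a submodule, being the set of $v$ such that $\UUsh v$ has zero degree-$\b0$ component, so your stability bullets cost nothing. The theorem therefore reduces to the identity
\[
E F \vac = \Big\langle E \prod_{i,a} \psi_i(z_{ia}), F_1 * S(F_2) \Big\rangle \vac, \qquad E \in \CS_{\bn}, \ F \in \CS_{-\bn},
\]
with $\psi$ expanded as in \eqref{eqn:expansion}. The paper obtains this from the shifted double relation \eqref{eqn:shifted drinfeld double relation}, in four steps:
\begin{enumerate}
\item On $\vac$, only the Cartan part of the middle factor of $\Delta^{(2)}(E)$ survives, and it acts by $\prod\psi$.
\item By \eqref{eqn:coproduct shuffle minus}, the degree-$\b0$ middle factor of $\Delta^{(2)}(F)$ is $\prod \ph^-_i(z_{ia})$ in the variables of $F_1$. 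It acts on $\vac$ by $\prod z_{ia}^{r_i}\psi_i(z_{ia})$.
\item The factors $z_{ia}^{r_i}$ exactly cancel the twist in $\langle \sigma_{-\br}(E_1), F_1\rangle$, turning it into $\langle E_1 \prod\psi, F_1\rangle$.
\item Finally, \eqref{eqn:bialgebra 1} reassembles the two pairings.
\end{enumerate}
This cancellation is where the fact that $r_i$ is the order of the pole of $\psi_i$ at $0$ really enters. Your remark that the expansion at $z=0$ lies ``in the allowed range'' only makes $W^{\sh}(\bpsi)$ well defined.

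Suppose you instead push through your explicit $e$-action. Iterating it should express $e_{i_1,d_1}\cdots e_{i_n,d_n} F\vac$ as iterated integrals over differences of circles around $\infty$ and $0$. That is the shape of the right-hand side of Lemma \ref{lem:antipode}, equivalently condition \eqref{eqn:condition borel}. It would identify the radical with $\bar{J}^{\neq 0}(\bpsi)$, which is a direct route to Theorem \ref{thm:main intro}. But to land on Definition \ref{def:ideal} you would still need Lemma \ref{lem:antipode}, which is a genuine Hopf-algebraic computation.

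Finally, what you call the main obstacle is not one. The isomorphism $W^{\sh}(\bpsi) \cong \CS^-$ follows from the triangular decomposition \eqref{eqn:triangular shifted}. That decomposition is built into the definition of $\UUsh$ as a shifted Drinfeld double of $\UUg$ and $\UUl$, whose halves are $\CS^\pm$ by construction. So $e_{i,d}\cdot R \in \CS^-$ automatically, and no wheel or pole conditions need to be checked.
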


\medskip

 The following fact is non-trivial
$$
\bar{J}^{\neq 0}(\bpsi) =  J^{\sh}(\bpsi) 
$$
and will be proved in \eqref{eqn:easy}. Comparing \eqref{eqn:l bar intro} with \eqref{eqn:technical intro} implies Theorem \ref{thm:main intro}. 

\medskip

\subsection{Acknowledgements}
The first-named author gratefully acknowledges the support of the Agence Nationale de la Recherche (grant ANR-24-CE40-3389). The second-named author gratefully acknowledges the support of the Swiss National Science Foundation grant 10005316.

\bigskip

\section{The Borel category $\CO$}
\label{sec:borel}

\medskip

 We recall $\UU$ and its Borel subalgebra for a complex semisimple Lie algebra $\fg$, and then generalize these notions to arbitrary symmetrizable Kac-Moody Lie algebras. We then review the Borel category $\CO$ defined in \cite{HJ, N Cat}, as well as the explicit construction of simple modules in this category from \loccit

\medskip 

\subsection{Basic notations}
\label{sub:basic notations}

The set $\BN$ will contain 0 throughout this paper. Fix $q \in \BC^*$, not a root of unity. We fix $h\in\BC$ satisfying $q = e^h$ so that the complex powers of $q$ are well-defined. To a finite set $I$ and a Cartan matrix
\begin{equation}
\label{eqn:cartan matrix}
C = \left(c_{ij} = \frac {2d_{ij}}{d_{ii}} \in \BZ\right)_{i,j \in I}
\end{equation}
one can associate a complex semisimple Lie algebra $\fg$. The numbers $d_{ij} = d_{ji}$ for $i\neq j$ must be non-positive integers, while the numbers $d_{ii}$ must be even positive integers. We will write
$$
d_i = \frac {d_{ii}}2
$$
for all $i \in I$. The root lattice of the Lie algebra $\fg$ will be identified with $\zz$ in what follows, and we will write
\begin{equation}
\label{eqn:bilinear form}
\cc \otimes \cc \xrightarrow {(\cdot,\cdot)} \BC, \qquad (\bs^i, \bs^j) = d_{ij}
\end{equation}
for the complexified symmetric invariant bilinear form, where 
$$
\bs^i = \underbrace{(0,\dots,0,1,0,\dots,0)}_{1 \text{ on }i\text{-th position}} \in \nn
$$
represents the $i$-th simple root. We will consider the standard partial order
$$
\bm \leq \bn \quad \Leftrightarrow \quad \bn - \bm \in \nn
$$
and write $\bm < \bn$ if $\bm \leq \bn$ and $\bm \neq \bn$. Let $\b0 = (0,\dots,0)$, $\bone=(1,\dots,1)$ and
$$
|\bm| = \sum_{i \in I} m_i
$$
for all $\bm = (m_i)_{i \in I}$.

\medskip

\begin{remark}
\label{rem:enlarge cartan}

Later in our paper, we will generalize complex semisimple Lie algebras to symmetrizable Kac-Moody Lie algebras. In this level of generality, there exist situations (such as type $\widehat{A}_2$) when the Cartan matrix \eqref{eqn:cartan matrix} is singular and hence the bilinear form \eqref{eqn:bilinear form} is degenerate. In this case, we extend the bilinear form following \cite{Kac}: we choose a complex vector space $\fh$ and a non-degenerate pairing
\begin{equation}
\label{eqn:bilinear form extended}
\fh \otimes \fh \xrightarrow {(\cdot,\cdot)} \BC
\end{equation}
which extends \eqref{eqn:bilinear form} with respect to a henceforth fixed inclusion $\cc \hookrightarrow \fh$ (we abuse notation and write $\bs^i$ for the image of the $i$-th standard basis vector in $\fh$). It is well-known that the smallest such enlargement $\fh$ has dimension $2|I| - \emph{rank}(C)$.
    
\end{remark}

\medskip

\subsection{The quantum affine algebra}
\label{sub:quantum affine}

We will be interested in the quantum affine algebra of $\fg$, with trivial central element. If we let $\wI = I \sqcup 0$, this algebra is defined as
\begin{equation}
\label{eqn:quantum affine group}
\UUaff = \BC \Big \langle e_i, f_i, \kappa_i^{\pm 1} \Big \rangle_{i \in \wI} \Big/ \Big( [\kappa_i, \kappa_j] = 0 \text{ and other relations} \Big)
\end{equation}
The ``other relations" one imposes in the right-hand side will not be used in the present paper, and the interested reader can find them in \cite[Subsection 2.1]{HJ}. The algebra \eqref{eqn:quantum affine group} has a Borel subalgebra 
$$
\UUaffg \subset \UUaff
$$
generated by $\{e_i, \kappa_i^{\pm 1}\}_{i \in \wI}$.

\medskip

\begin{definition} 
\label{def:category o affine}

(\cite{HJ}) Consider the category $\CO$ of complex representations 
\begin{equation}
\label{eqn:rep affine}
\UUaffg \curvearrowright V
\end{equation}
which admit a decomposition
\begin{equation}
\label{eqn:weight decomposition}
V = \bigoplus_{\bom \in \cup_{s=1}^t (\bom^s - \nn) } V_{\bom}
\end{equation}
for finitely many $\bom^1,\dots,\bom^t \in \cc$, such that every weight space
\begin{equation}
\label{eqn:weight}
V_{\bom} = \left\{v \in V \Big| \kappa_i \cdot v = q^{(\bom, \bs^i)}v, \ \forall i \in I \right\}
\end{equation}
is finite-dimensional. If $v \in V_\bom$ as above, then we call $\bom$ the weight of $v$.

\end{definition}

\medskip

\subsection{The quantum loop algebra}
\label{sub:quantum loop}

In order to index simple modules in the category $\CO$ of Definition \ref{def:category o affine}, we follow in the footsteps of \cite{CP} and consider the non-trivial isomorphism (constructed by \cite{Dr} and proved by \cite{B, Da})
\begin{equation}
\label{eqn:isomorphism}
\Phi : \UUaff \xrightarrow{\sim} \UU
\end{equation}
where the object in the right-hand side is the quantum loop algebra
\begin{equation}
\label{eqn:quantum loop group}
\UU = \Big \langle e_{i,d} , f_{i,d}, \ph_{i,d'}^+, \ph_{i,d'}^- \Big \rangle_{i \in I, d \in \BZ, d' \geq 0} \Big/ \Big( [\ph_{i,s}^\pm, \ph_{j,t}^{\pm'}] = 0 \text{ and other relations} \Big).
\end{equation}
We will recall the full set of relations in $\UU$ in Definitions \ref{def:pre quantum loop} and \ref{def:quantum loop algebra}. The only properties of the isomorphism \eqref{eqn:isomorphism} that will be important to us are
$$
\Phi(\kappa_i) = \ph_{i,0}^+
$$
$$
\Phi \left(\UUaffg\right) \supset \{\ph_{i,0}^+, \ph_{i,1}^+,\dots\}_{i \in I} \cup  \{e_{i,0}, e_{i,1},\dots\}_{i \in I}.
$$
As such, we can consider modules in category $\CO$ and ask how their weight spaces further decompose into generalized eigenspaces for the commutative family of endomorphisms $\{\ph_{i,0}^+,\ph_{i,1}^+,\dots\}_{i \in I}$. This is best systematized by the following notion.

\medskip

\begin{definition} 
\label{def:l weight}

An \textbf{$\ell$-weight} is an $I$-tuple of invertible power series
\begin{equation}
\label{eqn:l weight}
\bpsi = \left(\psi_i(z) = \sum_{d = 0}^{\infty} \frac {\psi_{i,d}}{z^d} \in \BC[[z^{-1}]]^\times \right)_{i \in I} 
\end{equation}
If every $\psi_i(z)$ is the expansion of a rational function, then $\bpsi$ is called \textbf{rational}.

\end{definition}

\medskip

 Thus, given an $\ell$-weight $\bpsi$, we can do two things:

\medskip

\begin{itemize}[leftmargin=*]

\item for any module $\UUaffg \curvearrowright V$, consider the generalized eigenspace
\begin{equation}
\label{eqn:generalized eigenspace}
V_{\bpsi} = \left\{v \in V \Big| \left(\ph_{i,d}^+ - \psi_{i,d} \cdot \text{Id}_V \right)^N (v) = 0\text{ for any }i,d \text{ and for }N \gg 0 \right\}
\end{equation}

\item consider the simple module 
\begin{equation}
\label{eqn:simple module}
\UUaffg \curvearrowright L(\bpsi)
\end{equation}
generated by a single vector $\vac$ subject to the relations
\begin{equation}
\label{eqn:simple module relations}
e_{i,d}\cdot \vac = 0 \qquad \text{and} \qquad \ph_{i,d}^+ \cdot \vac = \psi_{i,d} \vac
\end{equation}
for all $i \in I, d \geq 0$. A simple module \eqref{eqn:simple module} was shown to exist and be unique up to isomorphism in \cite{HJ}, building upon the work of \cite{CP} on finite-dimensional modules. 
\end{itemize}

Moreover, we have the following.

\begin{theorem}\cite{HJ} The representation $L(\bpsi)$ lies in category $\CO$ if and only if $\bpsi$ is rational.
\end{theorem}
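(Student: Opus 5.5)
The statement is the theorem of \cite{HJ}: $L(\bpsi)$ lies in category $\CO$ if and only if $\bpsi$ is rational. Its two implications need quite different tools. For the \emph{only if} direction, I would use the finite-dimensionality of weight spaces. For the \emph{if} direction, I would use explicit building blocks together with the submodule–quotient property of simple tensor product factors. The sketch below is written for $\fg$ of finite type, as in \cite{HJ}; the general symmetrizable case rests on the shuffle algebra description in \cite{N Cat}.

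\emph{Only if.} Suppose $L(\bpsi)$ is in $\CO$. Fix $i \in I$ and consider the weight space of weight $\bom - \bs^i$, where $\bom$ is the weight of $\vac$. It is finite-dimensional and contains the vectors $x_d := f_{i,?}$-type elements of $\UUaffg$ applied to $\vac$. Concretely, I would use the elements of the Borel subalgebra that lower the weight by $\bs^i$; these exist because $e_0$ lies in the Borel subalgebra, and the relevant images are $f_{i,d}\kappa$-type elements with $d \geq 1$. Call these vectors $v_d$ for $d \geq 1$. The relation $[e_{i,m}, f_{i,d}] \propto \ph^{\pm}_{i,m+d}$ gives
\[
e_{i,m}\, v_d \;\propto\; \psi_{i,m+d}\, \vac .
\]
Consider the Hankel matrix $(\psi_{i,m+d})_{m \geq 0,\, d \geq 1}$. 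By simplicity of $L(\bpsi)$, any vector of weight $\bom - \bs^i$ killed by all $e_{i,m}$ is zero. Hence the span of the $v_d$ is isomorphic to the row space of this Hankel matrix, and finite-dimensionality of the weight space forces the matrix to have finite rank. By Kronecker's theorem, a power series has a Hankel matrix of finite rank exactly when it is the expansion of a rational function. So each $\psi_i(z)$ is rational.

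\emph{If.} Every rational $\bpsi$ factors as a product of three kinds of pieces:
\begin{itemize}
\item constant $\ell$-weights, which are one-dimensional;
\item $\ell$-weights of finite-dimensional simple modules, which come from Drinfeld polynomial ratios;
\item the fundamental prefundamental weights $\bpsi = (1 - a z^{-1})^{\pm\delta_{ij}}$.
\end{itemize}
First, I would construct the prefundamental modules $L^\pm_{i,a}$ inside $\CO$. This is done as limits of Kirillov--Reshetikhin modules $W^{(i)}_{k,aq^{\cdots}}$ as $k \to \infty$, after normalizing by a character so that the weight spaces stabilize. The inductive system of these modules has graded dimensions bounded by the ordinary character of the limit, which gives finite-dimensional weight spaces. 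Next, take the tensor product of these blocks using the Borel coproduct. The result lies in $\CO$, since $\CO$ is stable under tensor products. The tensor product of the highest weight vectors is an $\ell$-highest weight vector of $\ell$-weight $\bpsi$. The simple module $L(\bpsi)$ is then a subquotient of the submodule it generates, and $\CO$ is closed under subquotients.

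The main obstacle is the construction of the negative prefundamental modules and the proof that their weight spaces are finite-dimensional. This requires controlling the KR-module $q$-characters uniformly in $k$, for example via the Frenkel--Mukhin algorithm or via the normalized limit of \cite{HJ}. I would take this limit construction as the key input. In the present paper's general symmetrizable setting, the analogous step is replaced by the explicit realization $L(\bpsi) = \CS^-_{<\b0}/J(\bpsi)$ from \cite{N Cat}.
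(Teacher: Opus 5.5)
Your proposal is correct and follows the proof of \cite{HJ}, which the paper cites without reproducing. For the only-if direction, the vectors $f_{i,d}\vac$ with $d \geq 1$ lie in a finite-dimensional weight space, and this forces a linear recurrence on the coefficients $\psi_{i,m+d}$. Simplicity is not needed there, since any single nontrivial linear relation among these vectors already gives the recurrence. For the if direction, you write $\bpsi$ as a constant times prefundamental $\ell$-weights and realize $L(\bpsi)$ as a subquotient of a tensor product in $\CO$. As you note, all the real work then lies in the existence of the positive and negative prefundamental modules in $\CO$, which is the main construction of \cite{HJ} and which you take as input rather than prove.
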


We henceforth work only with rational $\ell$-weights $\bpsi$.

\medskip

 The characters (i.e. the generating series of dimensions of weight spaces \eqref{eqn:weight}) of simple modules are not strong enough to distinguish between different $L(\bpsi)$'s. However, the $q$-characters of Frenkel-Reshetikhin (\cite{FR, HJ}, i.e. the generating series of dimensions of $\ell$-weight spaces \eqref{eqn:generalized eigenspace}) of simple modules do distinguish between them. We will denote these $q$-characters as follows, for any $V$ in category $\CO$
\begin{equation}
\label{eqn:q character}
\chi_q(V) = \sum_{\ell \text{-weights } \bpsi} \dim_{\BC} \left(V_{\bpsi} \right) [\bpsi]
\end{equation}
where $[\bpsi]$ are formal symbols that multiply component-wise (as $I$-tuples).

\medskip

\subsection{Kac-Moody Lie algebras}
\label{sub:kac-moody}

We will henceforth assume that $\fg$ is a symmetrizable Kac-Moody Lie algebra, or equivalently, drop the positive-definiteness requirement on the Cartan matrix \eqref{eqn:cartan matrix}. In this level of generality, we do not have a notion of 
$$
\UUaff \text{ and its Borel subalgebra } \UUaffg.
$$
However, there exists a notion of $\UU$, to which we will associate a replacement of the Borel subalgebra using shuffle algebra tools. We start with a two-step definition of the quantum loop algebra associated to $\fg$. In what follows, we will write
$$
e_i(z) = \sum_{d = -\infty}^{\infty} \frac {e_{i,d}}{z^d}, \qquad f_i(z) = \sum_{d = -\infty}^{\infty} \frac {f_{i,d}}{z^d}, \qquad \ph^\pm_i(z) = \sum_{d = 0}^{\infty} \frac {\ph^\pm_{i,d}}{z^{\pm d}}.
$$ 
and $q_i = q^{d_i}$. We assume that an enlargement $\fh \supseteq \cc$ as in Remark \ref{rem:enlarge cartan} is given, and define the following notion (which differs from that of \cite{N Cat} by the fact that it has a bigger Cartan subalgebra; symbols $\{\kappa_{\ba}\}_{\ba \in \fh}$ will always be considered to be $\BC$-additive, i.e. satisfy the formula 
\begin{equation}
\label{eqn:additive}
\kappa_{\alpha \ba+ \beta \bb} = \kappa_{\ba}^{\alpha} \kappa_{\bb}^{\beta}
\end{equation}
for all $\alpha,\beta \in \BC$ and $\ba,\bb \in \fh$; in all representations we will consider, all the $\kappa_{\ba}$'s will act diagonally with non-zero complex eigenvalues). For all $i,j \in I$, let
\begin{equation}
\label{eqn:def zeta}
\zeta_{ij}(x) = \frac {x - q^{-d_{ij}}}{x-1}.
\end{equation}

\medskip

\begin{definition}
\label{def:pre quantum loop}

The pre-quantum loop algebra associated to $\fg$ is
$$
\tUU = \BC \Big \langle e_{i,d}, f_{i,d}, \ph_{i,d'}^+ , \ph_{i,d'}^-, \kappa_{\ba} \Big \rangle_{i \in I, d \in \BZ, d' \geq 0, \ba \in \fh} \Big/
  \text{relations \eqref{eqn:rel 0 loop}-\eqref{eqn:rel 3 loop}}
$$
where we impose the following relations for all $i,j \in I$, $\ba \in \fh$, $\pm, \pm' \in \{+,-\}$:
\begin{equation}
\label{eqn:rel 0 loop}
  e_i(x) e_j(y) \zeta_{ji} \left( \frac yx \right) =\, e_j(y) e_i(x) \zeta_{ij} \left(\frac xy \right)
\end{equation}
\begin{equation}
\label{eqn:rel 1 loop}
  \ph_j^\pm(y) e_i(x) \zeta_{ij} \left(\frac xy \right) = e_i(x) \ph_j^\pm(y) \zeta_{ji} \left( \frac yx \right)
\end{equation}
\begin{equation}
\label{eqn:rel 1 loop bonus}
\kappa_{\ba} e_i(x) = e_i(x) \kappa_{\ba} q^{(\ba,\bs^i)}
\end{equation}
\begin{equation}
\label{eqn:rel 2 loop}
  \ph_{i}^{\pm}(x) \ph_{j}^{\pm'}(y) = \ph_{j}^{\pm'}(y) \ph_{i}^{\pm}(x), \quad \ph_{i,0}^{\pm} = \kappa_{\pm \bs^i}
\end{equation}
as well as the opposite relations \footnote{In other words, we replace any product $\dots e\ph e' \ph' \dots$ by $ \dots \ph' f' \ph f \dots$.} with $e$'s replaced by $f$'s, and finally the relation
\begin{equation}
\label{eqn:rel 3 loop}
  \left[ e_i(x), f_j(y) \right] =
  \frac {\delta_{ij}\delta \left(\frac xy \right)}{q_i - q_i^{-1}}  \Big( \ph_i^+(x) - \ph_i^-(y) \Big).
\end{equation}
In formula \eqref{eqn:rel 0 loop}, we clear denominators and obtain relations by equating the coefficients of all $x^{-d} y^{-d'}$ in the left and right-hand sides, while in \eqref{eqn:rel 1 loop} we expand in non-positive powers of $y^{\pm 1}$ and then equate coefficients. 

\end{definition}

\medskip

Note that we do not assume any Drinfeld-Serre type relations to hold in the pre-quantum loop algebra, and instead include (generalized versions of) such relations in Definition \ref{def:quantum loop algebra} below. More specifically, the quantum loop algebra $\UU$ will be defined as a particular quotient of the pre-quantum loop algebra $\tUU$, which we will introduce using the language of shuffle algebras. 

\medskip

We have the following shift automorphisms for all $\br = (r_i)_{i\in I} \in \zz$
\begin{equation}
\label{eqn:shift pre quantum}
\sigma_{\br} : \tUU \rightarrow \tUU, \quad e_{i,d} \mapsto e_{i,d+r_i}, f_{i,d} \mapsto f_{i,d-r_i}, \ph_{i,d}^\pm \mapsto \ph_{i,d}^\pm, \kappa_{\ba} \mapsto \kappa_{\ba}
\end{equation}

\medskip

\begin{remark}

We note that different authors use different normalizations for the Cartan elements, and we compare them here. In the present paper, we write
\begin{equation}
\label{eqn:phi to p}
\ph_j^\pm(y) = \kappa_{\bs^j}^{\pm 1} \exp \left(\sum_{u=1}^{\infty} \frac {p_{j,\pm u}}{u y^{\pm u}} \right)
\end{equation}
in terms of which \eqref{eqn:rel 1 loop} implies
\begin{equation}
\label{eqn:rel 2 loop k}
[p_{j,u}, e_i(x)] = e_i(x) x^u \left(q^{ud_{ij}} - q^{-ud_{ij}} \right)
\end{equation}
Therefore, the comparison between the generators $p_{j,u}$ and the $\tilde{h}_{i,u}$ of \cite[Section 9.2]{H Shifted} is given by the following formula for all $j\in I$ and $u \in \BZ \backslash 0$
$$
p_{j,u} = u(q-q^{-1}) \sum_{i \in I}  \tilde{h}_{i,u} C_{ij}(q^u)
$$
where 
\begin{equation}
\label{eqn:quantum Cartan matrix}
C_{ij}(x) = \frac {x^{d_{ij}}- x^{-d_{ij}}}{x^{d_i}-x^{-d_i}}
\end{equation}
is the modified quantum Cartan matrix (it coincides with the usual quantum Cartan matrix, which has entries $x^{d_i}+x^{-d_i}$ on the diagonal and $[c_{ij}]_x$ off the diagonal, for finite type $\fg$).
    
\end{remark}

\medskip

\subsection{The big shuffle algebra}
\label{sub:big shuffle}

We now review the trigonometric version (\cite{E}) of the Feigin-Odesskii shuffle algebra (\cite{FO}) associated to the Kac-Moody Lie algebra $\fg$. Consider the vector space of rational functions in arbitrarily many variables
\begin{equation}
\label{eqn:big shuffle}
\CV = \bigoplus_{\bn \in \nn} \CV_{\bn}, \quad \text{where} \quad \CV_{(n_i \geq 0)_{i \in I}} = \frac {\BC[z_{i1}^{\pm 1},\dots,z_{in_i}^{\pm 1}]^{\text{sym}}_{i \in I}}{\prod^{\text{unordered}}_{i \neq j} \prod_{a=1}^{n_i} \prod_{b=1}^{n_j} (z_{ia} - z_{jb})} 
\end{equation}
Above, ``sym" refers to color-symmetric rational functions, meaning that they are symmetric in the variables $z_{i1},\dots,z_{in_i}$ for each $i \in I$ separately (the terminology is inspired by the fact that $i \in I$ is called the color of the variable $z_{ia}$). We make the vector space $\CV$ into a $\BC$-algebra via the following shuffle product:
\begin{equation}
	\label{eqn:mult}
E( z_{i1}, \dots, z_{i n_i})_{i \in I} * E'(z_{i1}, \dots,z_{i n'_i})_{i \in I} = \frac 1{\bn!  \bn'!}\,\cdot
\end{equation}
$$
\textrm{Sym} \left[ E(z_{i1}, \dots, z_{in_i}) E'(z_{i,n_i+1}, \dots, z_{i,n_i+n'_i})
\prod_{i,j \in I} \mathop{\prod_{1 \leq a \leq n_i}}_{n_j < b \leq n_j+n_j'} \zeta_{ij} \left( \frac {z_{ia}}{z_{jb}}\right) \right]
$$
The word ``Sym" in \eqref{eqn:mult} denotes symmetrization with respect to the
\begin{equation*}
(\bn+\bn')! := \prod_{i\in I} (n_i+n'_i)!
\end{equation*}
permutations of the variables $\{z_{i1}, \dots, z_{i,n_i+n'_i}\}$ for each $i$ independently. The reason for formula \eqref{eqn:mult} is to ensure that there exist algebra homomorphisms
\begin{align}
&\widetilde{\Upsilon}^+ : \tUUp \rightarrow \CV, \qquad \quad e_{i,d} \mapsto z_{i1}^d \in \CV_{\bs^i} \label{eqn:upsilon plus} \\
&\widetilde{\Upsilon}^- : \tUUm \rightarrow \CV^{\text{op}}, \qquad f_{i,d} \mapsto z_{i1}^d \in \CV_{\bs^i}^{\text{op}} \label{eqn:upsilon minus}
\end{align}
where 
\begin{align*}
&\tUUp = \BC \Big \langle e_{i,d} \Big \rangle_{i \in I, d \in \BZ} \Big / ( \text{relation \eqref{eqn:rel 0 loop}}) \\
&\tUUm = \BC \Big \langle f_{i,d} \Big \rangle_{i \in I, d \in \BZ} \Big / ( \text{opposite of relation \eqref{eqn:rel 0 loop}}) 
\end{align*}
By standard results on triangular decompositions, we have
$$
\tUU = \tUUp \otimes  \BC \left [\ph_{i,1}^\pm, \ph_{i,2}^\pm, \dots, \kappa_{\ba} \right ]_{i \in I, \ba \in \fh} \otimes \tUUm
$$
with the natural identification of generators. Note that $\widetilde{\Upsilon}^\pm$ intertwine the shift automorphisms \eqref{eqn:shift pre quantum} with
\begin{align} 
&\sigma_{\br} : \CV \rightarrow \CV, \qquad \qquad R(z_{i1},\dots,z_{in_i}) \mapsto R(z_{i1},\dots,z_{in_i}) \prod_{i \in I} \prod_{a=1}^{n_i} z_{ia}^{r_i}, \label{eqn:shift plus} \\ 
&\sigma_{\br} : \CV^{\op} \rightarrow \CV^{\op}, \qquad R(z_{i1},\dots,z_{in_i}) \mapsto R(z_{i1},\dots,z_{in_i}) \prod_{i \in I} \prod_{a=1}^{n_i} z_{ia}^{-r_i} .\label{eqn:shift minus}
\end{align} 

\medskip

\begin{definition}
\label{def:quantum loop algebra} 

If we let
\begin{equation}
\label{eqn:quantum loop algebra quotient}
\UUpm = \tUUpm \Big/ \emph{Ker }\widetilde{\Upsilon}^\pm
\end{equation}
then the quantum loop algebra is defined as
\begin{equation}
\label{eqn:quantum loop algebra}
\UU = \UUp \otimes  \BC \left [\ph_{i,1}^\pm, \ph_{i,2}^\pm, \dots, \kappa_{\ba} \right ]_{i \in I, \ba \in \fh} \otimes \UUm
\end{equation}
with the relations induced by \eqref{eqn:rel 0 loop}-\eqref{eqn:rel 3 loop}. Note that $\UU$ inherits the shifts \eqref{eqn:shift pre quantum}.

\end{definition}

\medskip

\begin{example} 
\label{ex:finite type algebra}

For $\fg$ of finite type, it was proved in \cite{NT} that \eqref{eqn:quantum loop algebra quotient} coincides with the original quantum loop algebra defined by Drinfeld in \cite{Dr} and studied by Beck in \cite{B} and Damiani in \cite{Da}. In other words, a system of generators for the ideal $\emph{Ker }\widetilde{\Upsilon}^+$ is given by the Drinfeld-Serre relations
\begin{equation}
\label{eqn:drinfeld-serre}
\sum_{k=0}^{1-c_{ij}} (-1)^k {1-c_{ij} \choose k}_q \emph{Sym} \left[ e_i(z_1) \dots e_i(z_k) e_j(w) e_i(z_{k+1}) \dots e_i(z_{1-c_{ij}}) \right] = 0
\end{equation}
for all $i \neq j$, and analogously for $\emph{Ker }\widetilde{\Upsilon}^-$ using the $f$ currents. Above, $\emph{Sym}$ denotes symmetrization with respect to the variables $z_1,\dots,z_{1-c_{ij}}$.

\end{example}

\medskip 

\begin{example}
\label{ex:almost algebra}

More generally, if
$$
d_{ij} \in \{0, - \max(d_i,d_j)\}
$$
for all $i \neq j$, then we will say $\fg$ is a \emph{strongly symmetrizable} Kac-Moody Lie algebra. In this case, it follows from \cite{N Arbitrary, N New} that $\emph{Ker }\widetilde{\Upsilon}^+$ is still generated by relations \eqref{eqn:drinfeld-serre}, and so 
$$
\UU = \tUU \Big/ (\text{Drinfeld-Serre relations \eqref{eqn:drinfeld-serre}})
$$
    
\end{example}

\medskip

\begin{example} 
\label{ex:simply laced algebra}

For any simply-laced Kac-Moody Lie algebra $\fg$ (i.e. $d_{ii} = 2, \forall i \in I$), it was shown in \cite{N Loop} that $\emph{Ker }\widetilde{\Upsilon}^+$ is generated by relations \eqref{eqn:rho} below. To set up these relations, consider for any $i \neq j \in I$ and any arithmetic progressions $s,\dots,t$ and $s',\dots, t'$ with ratio 2, such that $s+t=s'+t'$, the following oriented graph

\begin{figure}[h]
\label{fig}
\centering
\includegraphics[scale=0.25]{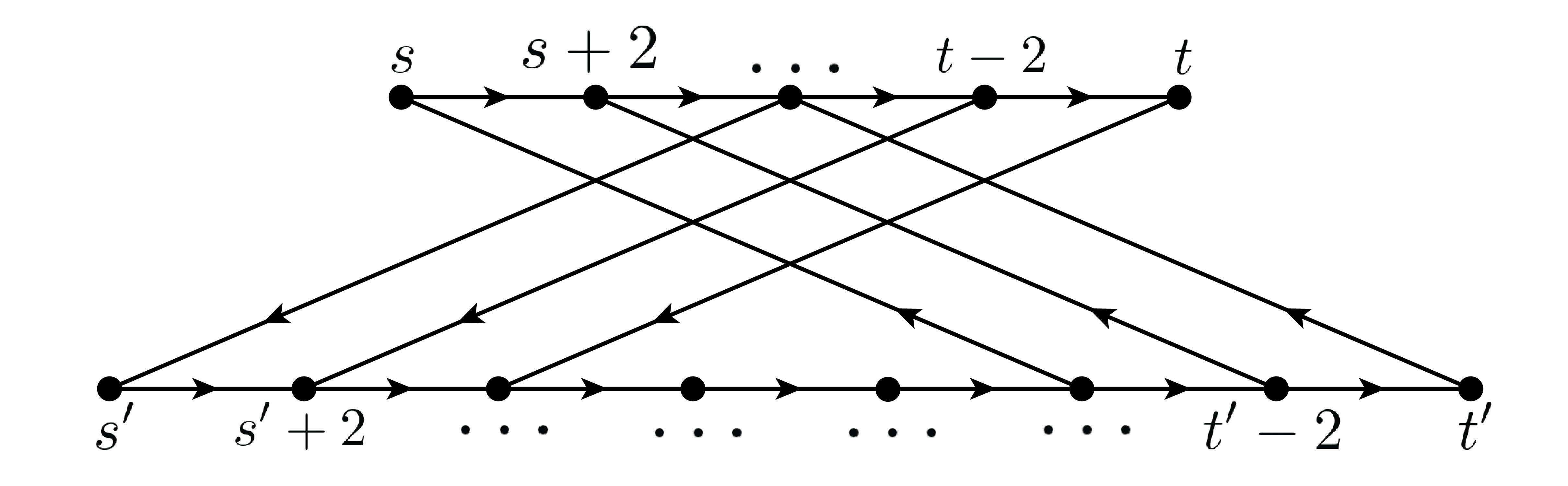} 
\caption{A distinguished zig-zag $Z$}
\end{figure}

\noindent with the diagonal edges pointing $-d_{ij}$ steps to the left. Then we impose the relation
\begin{equation}
\label{eqn:rho}
\sum \Big(\text{a certain polynomial} \Big)  \mathop{\prod_{\text{vertices } c \text{ in}}}_{\text{descending order}} \Big( e_{i}(z_c) \text{ or } e_{j}(z_c) \Big)
\end{equation}
with the sum going over all subgraphs of Figure 1 with no oriented cycles (any such subgraph gives rise to a well-defined order on the set of vertices, with respect to which we can take the product in \eqref{eqn:rho}) and we use either the current $e_i(z_c)$ or $e_j(z_c)$ depending on whether the vertex $c$ lies on the top or bottom row of Figure 1. We refer the interested reader to \cite{N Loop} for the precise polynomials in \eqref{eqn:rho}.

\medskip

\end{example}

\subsection{Hopf algebras}
\label{sub:hopf}

Let us now recall the reason why \eqref{eqn:quantum loop algebra} is a well-defined algebra with respect to relations \eqref{eqn:rel 0 loop}-\eqref{eqn:rel 3 loop}. The reason is the well-known fact that
\begin{align} 
&\tUUg = \tUUp \otimes \BC \left [\ph_{i,1}^+, \ph_{i,2}^+, \dots, \kappa_{\ba} \right ]_{i \in I, \ba \in \fh} \label{eqn:al plus} \\
&\tUUl = \BC \left [\ph_{i,1}^-, \ph_{i,2}^-, \dots, \kappa_{\ba} \right ]_{i \in I, \ba \in \fh} \otimes \tUUm \label{eqn:al minus}
\end{align}
(made into algebras using relations \eqref{eqn:rel 0 loop}-\eqref{eqn:rel 2 loop} and their opposites, respectively) are actually topological Hopf algebras with respect to the Drinfeld coproduct with values in a topological completion of the tensor square:
\begin{equation}
\label{eqn:coproduct ph}
\Delta(\ph^\pm_i(z)) = \ph^\pm_i(z) \otimes \ph^\pm_i(z), \quad \Delta(\kappa_{\ba}) = \kappa_{\ba} \otimes \kappa_{\ba}
\end{equation}
\begin{equation}
\label{eqn:coproduct e}
\Delta(e_i(z)) = \ph_i^+(z) \otimes e_i(z) + e_i(z) \otimes 1
\end{equation}
\begin{equation}
\label{eqn:coproduct f}
\Delta(f_i(z)) = 1 \otimes f_i(z) + f_i(z) \otimes \ph_i^-(z)
\end{equation}
for all $i \in I, \ba \in \fh$, and antipode $S$ given by
\begin{equation}
\label{eqn:antipode phi}
S\left(\ph_i^\pm(z) \right) = \left(\ph^\pm_i(z)\right)^{-1}, \quad S(\kappa_{\ba}) = \kappa_{\ba}^{-1}
\end{equation}
\begin{equation}
\label{eqn:antipode e}
S\left(e_i(z) \right) = -\left(\ph^+_i(z)\right)^{-1} e_i(z)
\end{equation}
\begin{equation}
\label{eqn:antipode f}
S\left(f_i(z) \right) = - f_i(z) \left(\ph^-_i(z)\right)^{-1}.
\end{equation}
for all $i \in I, \ba \in \fh$. Similarly, we can upgrade $\CV$ and $\CV^{\op}$ to topological Hopf algebras by appropriately enlarging them with elements $\ph_{i,d}^+$ and $\ph_{i,d}^-$ respectively. Explicitly, we have the following coproduct formulas, see \cite[Subsection 3.7]{N Cat}:
\begin{align}
\Delta(E) = \sum_{\b0 \leq \bm \leq \bn} \frac {\prod^{j \in I}_{m_j < b \leq n_j} \ph^+_j(z_{jb}) E(z_{i1},\dots , z_{im_i} \otimes z_{i,m_i+1}, \dots, z_{in_i})}{\prod^{i \in I}_{1\leq a \leq m_i} \prod^{j \in I}_{m_j < b \leq n_j} \zeta_{ji} \left( \frac {z_{jb}}{z_{ia}} \right)} \label{eqn:coproduct shuffle plus} \\
\Delta(F) = \sum_{\b0 \leq \bm \leq \bn} \frac {F(z_{i1},\dots , z_{im_i} \otimes z_{i,m_i+1}, \dots, z_{in_i}) \prod^{j \in I}_{1 \leq b \leq m_j} \ph^-_j(z_{jb})}{\prod^{i \in I}_{1\leq a \leq m_i} \prod^{j \in I}_{m_j < b \leq n_j} \zeta_{ij} \left( \frac {z_{ia}}{z_{jb}} \right)} \label{eqn:coproduct shuffle minus} 
\end{align}
for all $E \in \CS_{\bn}$, $F \in \CS_{-\bn}$. To make sense of the right-hand side of formulas \eqref{eqn:coproduct shuffle plus} and \eqref{eqn:coproduct shuffle minus}, we expand the denominator as a power series in the range $|z_{ia}| \ll |z_{jb}|$, and place all the powers of $z_{ia}$ to the left of the $\otimes$ sign and all the powers of $z_{jb}$ to the right of the $\otimes$ sign (for all $i,j \in I$, $1 \leq a \leq m_i$, $m_j < b \leq n_j$). The homomorphisms $\widetilde{\Upsilon}^\pm$ respect the Hopf algebra structures above, so
\begin{align*} 
&\UUg = \UUp \otimes \BC \left [\ph_{i,1}^+, \ph_{i,2}^+, \dots, \kappa_{\ba} \right ]_{i \in I, \ba \in \fh} \\
&\UUl = \BC \left [\ph_{i,1}^-, \ph_{i,2}^-, \dots, \kappa_{\ba} \right ]_{i \in I, \ba \in \fh} \otimes \UUm 
\end{align*}
inherit Hopf algebra structures. Moreover, there exists a Hopf pairing
\begin{equation}
\label{eqn:pairing}
\UUg \otimes \UUl \xrightarrow{\langle \cdot,\cdot\rangle} \BC
\end{equation} 
generated by the assignments
\begin{equation}
\label{eqn:pairing ef}
\Big \langle e_i(x), f_j(y) \Big \rangle = \frac {\delta_{ij}\delta \left(\frac xy \right)}{q_i^{-1} - q_i} 
\end{equation} 
\begin{equation} 
\label{eqn:pairing ph}
\Big \langle \ph^+_i(x), \ph^-_j(y) \Big \rangle =  \frac {xq^{d_{ij}} - y}{x - yq^{d_{ij}}}, \qquad \langle \kappa_{\ba}, \kappa_{\bb} \rangle = q^{-(\ba,\bb)}
\end{equation} 
\footnote{Note that our $q$ and $\kappa_i$ are the usual $q^{-1}$ and $K_i^{-1}$ from the theory of quantum groups.} under the following conditions for all $a,a_1,a_2 \in \UUg$ and $b,b_1,b_2 \in \UUl$
\begin{align}
&\Big \langle a,b_1b_2 \Big \rangle = \Big \langle \Delta(a), b_1 \otimes b_2 \Big \rangle \label{eqn:bialgebra 1} \\
&\Big \langle a_1 a_2 ,b \Big \rangle = \Big \langle a_1 \otimes a_2, \Delta^{\text{op}}(b) \Big \rangle \label{eqn:bialgebra 2} 
\end{align}
($\Delta^{\text{op}}$ is the coproduct opposite to $\Delta$) and
\begin{equation}
\label{eqn:antipode pairing}
\Big \langle S(a), S(b) \Big \rangle = \Big \langle a,b \Big \rangle.
\end{equation}
As shown in \cite{N Arbitrary}, we have that 
\begin{equation}
\label{eqn:drinfeld double}
\UU \text{ is the Drinfeld double } \frac {\UUg \otimes \UUl}{\left( \kappa_{\ba} \otimes 1 - 1 \otimes \kappa_{\ba} \right)_{\ba \in \fh}}.
\end{equation}
Above, recall that the multiplication in the Drinfeld double is controlled by 
\begin{equation}
\label{eqn:drinfeld double relation}
\begin{split}
ba = \Big \langle a_1, S(b_1) \Big \rangle a_2b_2 \Big \langle a_3, b_3 \Big \rangle \\ 
\Leftrightarrow \quad ab = \Big \langle a_1, b_1 \Big \rangle b_2a_2 \Big \langle a_3, S(b_3) \Big \rangle
\end{split}
\end{equation}
for all $a \in \UUg$ and $b \in \UUl$, where we use Sweedler notation 
$$
\Delta^{(2)}(x) = (\Delta \otimes \text{Id} ) \circ \Delta(x) = x_1 \otimes x_2 \otimes x_3,
$$
to avoid writing down the implied summation signs.

\medskip 

\begin{remark}

The above point of view on $\UU$ affords significant technical advantages. For example, showing that the Drinfled new coproduct \eqref{eqn:coproduct ph}, \eqref{eqn:coproduct e}, \eqref{eqn:coproduct f} respects the Drinfeld-Serre relations \eqref{eqn:drinfeld-serre} is a challenging computation directly (see for instance \cite{Da2}). However, the fact that the composition of homomorphisms
$$
\widetilde{\Upsilon}^+ : \tUUp \twoheadrightarrow \UUp \hookrightarrow \CV 
$$
respects the coproduct allows us to conclude that $\Delta$ descends from $\tUU$ to $\UU$.

\end{remark}

\medskip

\subsection{The (small) shuffle algebra}
\label{sub:small shuffle}

We will refer to either of
\begin{equation}
\label{eqn:spherical}
\CS^\pm = \text{Im }\widetilde{\Upsilon}^\pm
\end{equation}
as ``the" shuffle algebra, i.e. the subalgebra of either $\CV$ or $\CV^{\op}$ generated by $\{z_{i1}^d \in \CV_{\bs^i}\}_{i \in I, d\in \BZ}$. By construction, we have
\begin{equation}
\label{eqn:iso shuffle}
\UUpm \cong \CS^{\pm}
\end{equation}
Explicitly, $\CS^-$ is the $\BC$-linear span of rational functions of the form
\begin{equation}
\label{eqn:generators}
\text{Sym} \left[ z_1^{d_1} \dots z_n^{d_n} \prod_{1 \leq a < b \leq n}\zeta_{i_bi_a} \left( \frac {z_b}{z_a} \right) \right]
\end{equation}
as $i_1,\dots,i_n$ run over $I$ and $d_1,\dots,d_n$ run over $\BZ$ (in the expression above, we identify each variable $z_a$ with some variable of the form $z_{i_a\bullet_a}$ in the notation of \eqref{eqn:big shuffle}; the values of $\bullet_a \in \{1,2,\dots\}$ do not matter due to the presence of the symmetrization, as long as we have $\bullet_a \neq \bullet_b$ whenever $a \neq b$, $i_a = i_b$). $\CS^+$ has an analogous description to \eqref{eqn:generators}, but with the product going over $a>b$. The shuffle algebras $\CS^\pm$ are graded by $\pm \nn$, with
$$
\CS_{\bn} = \CS^+ \cap \CV_{\bn} \qquad \text{and} \qquad \CS_{-\bn} = \CS^- \cap \CV^{\op}_{\bn} 
$$
and they inherit the shift automorphisms $\sigma_{\br} : \CS^\pm \rightarrow \CS^\pm$ from \eqref{eqn:shift plus}-\eqref{eqn:shift minus}. Moreover,
\begin{align*} 
&\CS^\geq = \CS^+ \otimes \BC \left [\ph_{i,1}^+, \ph_{i,2}^+, \dots, \kappa_{\ba} \right ]_{i \in I, \ba \in \fh} \\
&\CS^\leq = \BC \left [\ph_{i,1}^-, \ph_{i,2}^-, \dots, \kappa_{\ba} \right ]_{i \in I, \ba \in \fh} \otimes \CS^-
\end{align*}
inherit topological Hopf algebra structures from extended $\CV$ and $\CV^{\op}$ (see \eqref{eqn:coproduct shuffle plus}-\eqref{eqn:coproduct shuffle minus}). Meanwhile, the pairing \eqref{eqn:pairing} takes the following form under the isomorphisms \eqref{eqn:iso shuffle}:
\begin{equation}
\label{eqn:pairing shuffle}
\CS^\geq \otimes \CS^\leq \xrightarrow{\langle \cdot, \cdot \rangle} \BC
\end{equation}
by formula \eqref{eqn:pairing ph} together with \footnote{We will abuse notation in our formulas for the pairing by writing $e_{i,d}$ instead of $z_{i1}^d$, see \eqref{eqn:upsilon plus}.}
\begin{equation}
\label{eqn:pairing shuffle formula}
\Big \langle e_{i_1,d_1} * \dots * e_{i_n,d_n}, F \Big \rangle = \int_{|z_1| \gg \dots \gg |z_n|} \frac {z_1^{d_1} \dots z_n^{d_n} F(z_1,\dots,z_n)}{\prod_{1\leq a < b \leq n} \zeta_{i_bi_a} \left(\frac {z_b}{z_a} \right)} 
\end{equation}
for all $F \in \CS_{-\bn}$, any $d_1,\dots,d_n \in \BZ$ and any $i_1,\dots,i_n \in I$ such that $\bs^{i_1}+\dots+\bs^{i_n} = \bn$ (the notation in the right-hand side of \eqref{eqn:pairing shuffle formula} is defined in accordance with \eqref{eqn:generators}). The subscript under the integral sign means that the variables run over concentric circles centered at the origin, which are very far away from each other and ordered as prescribed by the $\gg$ signs. The volume form $\prod_{a=1}^n \frac {dz_a}{2\pi i z_a}$ will be implied in all our integrals, although we will not write it out explicitly.

\medskip

\begin{example} 
\label{ex:finite type}

For $\fg$ of finite type, we have the following explicit description
\begin{equation}
\label{eqn:e}
\CS^+ = \left\{ \frac {\rho \text{ satisfying \eqref{eqn:wheel 1}}}{\prod^{\text{unordered}}_{i \neq j} \prod_{a, b} (z_{ia} - z_{jb})}  \right\} 
\end{equation}
where $\rho$ goes over color-symmetric Laurent polynomials which satisfy the so-called Feigin-Odesskii wheel conditions for all $i \neq j$ in $I$
\begin{equation}
\label{eqn:wheel 1}
\rho (\dots, z_{ia}, \dots,z_{jb},\dots)\Big|_{(z_{i1},z_{i2}, \dots, z_{i,1-c_{ij}}) \mapsto (z_{j1} q^{d_{ij}}, z_{j1} q^{d_{ij}+d_{ii}},  \dots, z_{j1} q^{-d_{ij}})} =  0
\end{equation}
The inclusion $\subseteq$ of \eqref{eqn:e} was established in \cite{E} based on the seminal work \cite{FO}, while the inclusion $\supseteq$ of \eqref{eqn:e} was proved in \cite{NT}. Compare with Example \ref{ex:finite type algebra}.

\end{example}

\medskip 

\begin{example} 
\label{ex:almost}

For $\fg$ strongly symmetrizable (see Example \ref{ex:almost algebra}), formulas \eqref{eqn:e} and \eqref{eqn:wheel 1} also hold as stated, as proved in \cite[Lemma 3.23]{N New}.

\end{example} 

\medskip

\begin{example} 
\label{ex:simply laced}

For any simply-laced symmetrizable Kac-Moody Lie algebra $\fg$ (i.e. $d_{ii} = 2, \forall i \in I$), it was shown in \cite{N Loop} that 
\begin{equation}
\label{eqn:ee}
\CS^+ = \left\{ \frac {\rho \text{ satisfying \eqref{eqn:wheel 2}}}{\prod^{\text{unordered}}_{i \neq j} \prod_{a, b} (z_{ia} - z_{jb})}  \right\} 
\end{equation}
where $\rho$ goes over color-symmetric Laurent polynomials that satisfy the following condition: for any $i \neq j$ in $I$ and any $t-s, t'-s' \in 2 \BN$ such that $s-t' = s' -t \equiv d_{ij}$ mod 2, 
\begin{equation}
\label{eqn:wheel 2}
(x-y)^{\frac {t-s'+d_{ij}}2+1}
\end{equation}
divides the specialization of $\rho$ at 
\begin{align}
&z_{i1} = xq^s, \ \quad z_{i2} = xq^{s+2}, \quad \ \dots \ \quad z_{i,\frac {t-s}2} = xq^{t-2}, \ \ \quad z_{i, \frac {t-s}2+1} = xq^t \label{eqn:spec 1 intro} \\
&z_{j1} = yq^{s'}, \quad z_{j2} = yq^{s'+2}, \quad \dots \quad z_{j,\frac {t'-s'}2} = yq^{t'-2}, \quad z_{j, \frac {t'-s'}2+1} = yq^{t'} \label{eqn:spec 2 intro}
\end{align}
Compare with Example \ref{ex:simply laced algebra}.

\end{example}

\medskip

\subsection{Slope subalgebras and category $\CO$}
\label{sub:slopes}

 The following are particular cases of slope subalgebras, which were studied in \cite{N Cat}. Let
\begin{align*}
&\CS_{\geq \b0|\bn} = \left\{ E \in \CS_{\bn}  \Big| \lim_{\xi \rightarrow 0} E(\xi z_{i1},\dots,\xi z_{im_i},z_{i,m_{i+1}},\dots,z_{in_i}) < \infty, \forall \b0 < \bm \leq \bn \right\} \\
&\CS_{< \b0|-\bn} = \left\{ F \in \CS_{-\bn}  \Big| \lim_{\xi \rightarrow 0} F(\xi z_{i1},\dots,\xi z_{im_i},z_{i,m_{i+1}},\dots,z_{in_i}) = 0, \forall \b0 < \bm \leq \bn \right\}
\end{align*}
It is straightforward to show that
\begin{align*}
&\CS_{\geq \b0}^+ = \bigoplus_{\bn \in \nn} \CS_{\geq \b0|\bn} \\
&\CS_{< \b0}^- = \bigoplus_{\bn \in \nn} \CS_{< \b0|-\bn}
\end{align*}
are subalgebras of $\CS^+$ and $\CS^-$ (respectively) with respect to the shuffle product. 

\medskip

\begin{proposition}
\label{prop:borel}

The subspace
$$
\CA^{\geq} = \CS_{\geq \b0}^+ \otimes \BC \left [\ph_{i,1}^\pm, \ph_{i,2}^\pm, \dots, \kappa_{\ba} \right ]_{i \in I, \ba \in \fh} \otimes  \CS_{< \b0}^-
$$
of $\UU$ is a subalgebra. If $\fg$ is of finite type, the isomorphism
$$
\UUaff \xrightarrow{\sim} \UU
$$
sends the Borel subalgebra $\UUaff$ isomorphically onto $\CA^{\geq}$.

\end{proposition}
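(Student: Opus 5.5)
We plan to prove the two assertions of Proposition \ref{prop:borel} separately. The first is that $\CA^{\geq}$ is closed under multiplication. The second is that, in finite type, it coincides with the image of the Borel subalgebra.

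\textbf{Closure under multiplication.} Each of the three tensor factors is already a subalgebra: $\CS^+_{\geq \b0}$ and $\CS^-_{<\b0}$ by the remark preceding the proposition, and the Cartan part is commutative. By the triangular decomposition \eqref{eqn:quantum loop algebra}, it therefore suffices to show two straightening properties. First, a product $h \cdot E$ or $F \cdot h$ with $h$ Cartan can be rewritten inside $\CA^{\geq}$. Second, a product $F \cdot E$ with $F \in \CS_{<\b0|-\bn}$ and $E \in \CS_{\geq \b0|\bm}$ can be rewritten as an element of $\CS^+_{\geq \b0} \otimes \text{Cartan} \otimes \CS^-_{<\b0}$.

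For the first property, use \eqref{eqn:rel 1 loop} in the form \eqref{eqn:rel 2 loop k}: commuting $p_{j,u}$ past $E$ multiplies $E$ by $\sum_a z_{ia}^u$ times constants. For $u>0$ this preserves the finiteness condition at $\xi \to 0$. For $u<0$, one must instead move $\ph^-$ to the right past $F$, where multiplication by $z^{-u}$ preserves vanishing at $0$. So we route positive modes leftward and negative modes rightward.

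For the second property, use the Drinfeld double relation \eqref{eqn:drinfeld double relation} with the coproducts \eqref{eqn:coproduct shuffle plus}--\eqref{eqn:coproduct shuffle minus}. The components $E_1 \otimes E_2 \otimes E_3$ of $\Delta^{(2)}(E)$ and the analogous components of $F$ are obtained by expanding in the region $|z_{ia}|\ll|z_{jb}|$. The key check is that whenever $E \in \CS_{\geq \b0}$, the middle factors that survive the pairing again lie in $\CS_{\geq\b0}$ (up to $\ph^+$ factors), and similarly for $F$. The pairing factors $\langle E_1, S(F_1)\rangle$ and $\langle E_3,F_3\rangle$ are computed by the contour integral \eqref{eqn:pairing shuffle formula}. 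Their slope conditions (boundedness versus vanishing at $0$) force all terms to vanish except those in which the naive-degree-zero pieces pair. This is the standard argument that slope subalgebras $\CS^\pm_{\geq\b0}$ and $\CS^\mp_{<\b0}$ form a ``half-double'', as in \cite{N Cat}, and we would follow it.

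\textbf{Finite type.} Here $\Phi(\UUaffg)$ is generated by $\kappa_i$, $e_i$ for $i\in I$, and $e_0$. One has $\Phi(e_i)=e_{i,0}$, and $\Phi(e_0)$ is, up to a $\kappa$-factor, a $q$-commutator of $f_{j,1}$'s realizing the negative of the highest root shifted by the loop degree. First, by the first part, $\Phi(\UUaffg)\subseteq \CA^{\geq}$ once we check that these generators lie there; for instance, $\Phi(e_0)$ has shuffle image proportional to $z_{j_1}\cdots z_{j_n}\prod\zeta$, which vanishes as $\xi\to0$. Conversely, we show that $\CA^{\geq}$ is generated by $e_{i,d}$ ($d\ge0$), $\ph^+$, and $\CS^-_{<\b0}$, and that the latter is generated by the $f_{i,d}$ ($d>0$) together with root vectors. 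We then compare graded dimensions: $\UUaffg$ has a PBW basis (Beck), and the slope-factorization of $\CS^\pm$ from \cite{N Cat} gives matching PBW bases for $\CS^\pm_{\geq\b0},\CS^-_{<\b0}$.

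\textbf{Main obstacle.} We expect the hardest step to be this last dimension and PBW comparison, specifically identifying $\CS^-_{<\b0}$ with $\UUm\cap\Phi(\UUaffg)$. Our first attempt would be to compare both with Beck's convex-order root vectors for $\widehat{\fg}$ restricted to positive affine roots.
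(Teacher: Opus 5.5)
The paper gives no proof of this proposition; it is quoted from \cite{N Cat}. Your outline therefore has to stand on its own, and its Cartan step contains a genuine error.

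Your ``routing'' of negative modes is not available. In a product $h\cdot E$ the order is fixed, and putting it in the form $\CS^+\otimes(\text{Cartan})\otimes\CS^-$ forces $h$ past $E$. By \eqref{eqn:rel 2 loop k}, $[p_{i,-1},e_{i,0}]=(q^{-2d_i}-q^{2d_i})\,e_{i,-1}$, and $e_{i,-1}\notin\CS^+_{\geq\b0}$. On the $F$ side your sign is reversed: by \eqref{eqn:p shuffle}, moving $p_{i,u}$ with $u<0$ past $F$ multiplies $F$ by the \emph{negative} power $z^{u}$. So $f_{i,1}\ph^-_{i,1}$ contains a nonzero multiple of $\kappa_{-\bs^i}f_{i,0}$, and $f_{i,0}\notin\CS^-_{<\b0}$. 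With the $\ph^-_{i,d}$, $d>0$, included, the subspace is simply not closed. In finite type these elements are not in $\Phi(\UUaffg)$ either, since their degree $-d\delta$ is not a nonnegative combination of $\alpha_0$ and the $\alpha_i$. The $\pm$ in the Cartan factor is a misprint for $+$. With $\BC[\ph^+_{i,1},\ph^+_{i,2},\dots,\kappa_{\ba}]$, your first straightening property is correct, using only $u>0$ on both sides.

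The architecture of your closure argument (triangular decomposition plus Drinfeld-double straightening) is sound. For $F\cdot E$, however, the surviving terms are more specific than you say, and the lemma that makes the argument work is only asserted.
\begin{itemize}
\item Scaling the smallest variables shows that the first factor $E_1$ of $\Delta^{(2)}(E)$ has degree $\geq 0$, since its $\ph^+_{j,k}$ have degree $k\geq 0$. Likewise $F_1$ has degree $>0$ unless $F_1=1$.
\item Hence $\langle E_1,S(F_1)\rangle$ kills everything except $F_1=1$, $E_1\in\BC[\kappa_{\ba}]$. In particular this removes all $\ph^-$'s from $F_2$.
\item The pairing $\langle E_3,F_3\rangle$ is \emph{not} restricted to degree-zero pieces. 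The term $E_2\otimes E_3=\ph^+_{i,1}\otimes e_{i,-1}$ against $F_3=f_{i,1}$ is exactly what produces $\ph^+_{i,1}$ in $[e_{i,0},f_{i,1}]$.
\end{itemize}
What you still need to prove is that the left tensor factors of $\Delta(E)$ and $\Delta(F)$, for $E\in\CS^+_{\geq\b0}$ and $F\in\CS^-_{<\b0}$, again satisfy the respective slope conditions.

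The finite-type half is also essentially open in your proposal.
\begin{itemize}
\item $\Phi(e_0)$ is, up to a $\kappa$-factor, a $q$-commutator of $f_{j,0}$'s with a single $f_{j,1}$, of loop degree $1$. It is not a commutator of $f_{j,1}$'s, and it is not proportional to $z_{j_1}\cdots z_{j_n}\prod\zeta$.
\item Its membership in $\CS^-_{<\b0}$ depends on the precise $q$-commutator. For $\mathfrak{sl}_3$, the two $q$-commutators of $f_{1,0}$ and $f_{2,1}$ are multiples of $\frac{z_1z_2}{z_1-z_2}\in\CS^-_{<\b0}$ and of $\frac{z_2^2}{z_1-z_2}\notin\CS^-_{<\b0}$.
\item The reverse inclusion is left unproved, and it is the real content of the claim. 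This holds whether you argue via generation of $\CS^+_{\geq\b0}$ by the $e_{i,d}$, $d\geq 0$, and of $\CS^-_{<\b0}$ by the $f_{i,d}$, $d>0$, plus root vectors, or via a PBW dimension count.
\end{itemize}
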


\medskip

\begin{remark} As explained in \cite{N Char}, there exists a factorization
$$
\CA^{\geq} = \prod_{\mu \in (-\infty, \infty]}^{\rightarrow} \CB_{\mu}
$$
where $\CB_{\mu} \subset \CA^{\geq}$ is a subalgebra whose elements all satisfy the identity
$$
\mu(\emph{vdeg } X) = | \emph{hdeg } X | 
$$
(thus, $\mu = \infty$ contains the generators $e_{i,0} \in \CS^+$, while $\mu = 0$ corresponds to the positive half of the loop-Cartan subalgebra). The graded dimension of each $\CB_{\mu}$ is determined by the exponents in the conjectural \cite[formula (90)]{N Char}, which are known for finite type $\fg$. Meanwhile, the commutation relations between general elements in different algebras $\CB_{\mu}$ is not known even in finite types, see \cite{BS} for a classic situation that is closely related to the $\fg = \hgl_1$ analogue of the present paper.
    
\end{remark}

\medskip

 The construction above justifies the following generalization of Definition \ref{def:category o affine}, which applies to all symmetrizable Kac-Moody Lie algebras $\fg$.

\medskip

\begin{definition} 
\label{def:category o borel}

(\cite{N Cat}) Consider the category $\CO$ of complex representations 
\begin{equation}
\label{eqn:rep loop}
\CA^{\geq} \curvearrowright V
\end{equation}
which admit a decomposition 
\begin{equation}
\label{eqn:weight decomposition km}
V = \bigoplus_{\bom \in \cup_{s=1}^t (\bom^s - \nn) } V_{\bom}
\end{equation}
for finitely many $\bom^1,\dots,\bom^t \in \fh$, such that every weight space
\begin{equation}
\label{eqn:weight km}
V_{\bom} = \left\{v \in V \Big| \kappa_{\ba} \cdot v = q^{(\bom, \ba)}v, \ \forall \ba \in \fh \right\}
\end{equation}
is finite-dimensional. If $v \in V_\bom$ as above, then we call $\bom$ the weight of $v$.
\end{definition}

\medskip

\subsection{Simple modules}
\label{sub:simple}

For any rational $\ell$-weight $\bpsi$, we will write $\br = \ord \bpsi \in \zz$ for the tuple of order of poles of $\bpsi = (\psi_i(z))_{i \in I}$ at $z=0$. Consider the following $-\nn$ graded vector spaces, which were introduced in \cite{N Cat, N Char}
\begin{align*}
L(\bpsi) &= \CS_{<\b0}^- \Big/ J(\bpsi) \\
L^{\neq 0}(\bpsi) &= \CS_{<\b0}^- \Big/ J^{\neq 0}(\bpsi) \\
L^{\br} &= \CS_{<\b0}^- \Big/ J^{\br}
\end{align*}
In the formulas above, we define 
$$
\begin{aligned}
J(\bpsi) &= \bigoplus_{\bn \in \nn} J(\bpsi)_{\bn}  \\ 
J^{\neq 0}(\bpsi) &= \bigoplus_{\bn \in \nn} J^{\neq 0}(\bpsi)_{\bn} \\ 
J^{\br} &= \bigoplus_{\bn \in \nn} J^{\br}_{\bn} 
\end{aligned} \qquad \subseteq \qquad \bigoplus_{\bn \in \nn} \CS_{<\b0|-\bn} = \CS^-_{<\b0}
$$
that consist of those elements $F = F(z_{i1},\dots,z_{in_i})_{i \in I} \in \CS_{<\b0|-\bn}$ such that 
\begin{align}
&\left \langle E(z_{i1},\dots,z_{in_i}) \prod_{i \in I} \prod_{a=1}^{n_i} \psi_i(z_{ia}) , S (F) \right \rangle = 0, \quad \forall E \in \CS_{\geq \b0|\bn} \label{eqn:for 1} \\
&\left \langle E(z_{i1},\dots,z_{in_i}) \prod_{i \in I} \prod_{a=1}^{n_i} \psi_i(z_{ia}) , S (F) \right \rangle = 0, \quad \forall E \in \sigma_{N \bone} (\CS_{\geq \b0|\bn}), N \gg 0 \label{eqn:for 2} \\
&\left \langle E(z_{i1},\dots,z_{in_i}) \prod_{i \in I} \prod_{a=1}^{n_i} z_{ia}^{-r_i} , S (F) \right \rangle = 0, \qquad \forall E \in \CS_{\geq \b0|\bn} \label{eqn:for 3}
\end{align}
respectively (in the right-hand side, $S$ denotes the antipode map that we will not need to review). One of the main results of \cite{N Cat} is that there is an action
\begin{equation}
\label{eqn:simple module Borel}
\CA^{\geq} \curvearrowright L(\bpsi)
\end{equation}
with respect to which the RHS is the unique (up to isomorphism) simple $\CA^{\geq}$-module generated by a single vector $\vac$ that satisfies the relations 
\begin{equation}
\label{eqn:simple module relations 1}
\ph_{i,d}^+ \cdot \vac = \psi_{i,d} \vac
\end{equation}
\begin{equation}
\label{eqn:simple module relations 2}
e_{i,d}\cdot \vac = 0 
\end{equation}
for all $i \in I$ and $d \geq 0$.

\medskip

\begin{remark}
\label{rem:enlarge simple}

If we work with the enlarged Cartan subalgebra as in Remark \ref{rem:enlarge cartan}, then we actually obtain a simple module for all
$$
(\bpsi,\bom)
$$
where $\bom \in \fh$ has the property that $\psi_{i,0} = q^{(\bom,\bs^i)}$ for all $i \in I$, as in \cite{HTG}. Such a simple module also satisfies the relation
\begin{equation}
\label{eqn:simple module relations 3}
\kappa_{\ba} \cdot \vac = q^{(\bom,\ba)} \vac
\end{equation}
for all $\ba \in \fh$, on top of \eqref{eqn:simple module relations 1} and \eqref{eqn:simple module relations 2}. To keep things readable, we will not include $\bom$ in our notation, and still refer to the simple modules merely as $L(\bpsi)$. 
    
\end{remark}

\medskip

\subsection{Decomposing simple modules}

It was argued in \cite{N Cat} that \eqref{eqn:simple module Borel} is an analogue of the classical construction of irreducible $\fg$-representations as quotients of $U(\fn)$ by the kernel of the Shapovalov form: $\CS^-_{<\b0}$ takes the role of $U(\fn)$ and $J(\bpsi)$ takes the role of kernel of a contravariant form. Moreover, the following decomposition of vector spaces was proved in \cite[Proposition 4.8]{N Char}
\begin{equation}
\label{eqn:l factor}
L(\bpsi) \cong L^{\br} \otimes L^{\neq 0}(\bpsi)
\end{equation}
The isomorphism above was shown to respect the action of the positive loop Cartan subalgebra (generated by the $\ph_{i,d}^+$), and so it implies the following product formula for $q$-characters (which experts have long known in finite types)
\begin{equation}
\label{eqn:chi factor}
\chi_q(L(\bpsi)) = \chi^{\br} \cdot \chi_q(L^{\neq 0}(\bpsi))
\end{equation}
The factor $\chi^{\br}$ in the right-hand side of \eqref{eqn:chi factor} is an ordinary character (that is, a combination of rational $\ell$-weights that are constant), which was calculated in \cite{N Char} in accordance to a conjecture of \cite{MY, W1}. In the present paper, we will mostly focus on the second factor, or equivalently, on the second tensor factor of \eqref{eqn:l factor}. To start with, the following simple statement was proved in \cite[Remark 4.5]{N Char}.

\medskip

\begin{lemma}
\label{lem:iso}

The inclusion $\CS^-_{<\b0} \subset \CS^-$ induces an isomorphism
\begin{equation}
\label{eqn:iso}
L^{\neq 0}(\bpsi) \cong \CS^- \Big/ \bar{J}^{\neq 0}(\bpsi)
\end{equation}
where we write $\bar{J}^{\neq 0}(\bpsi) = \{F \in \CS^- \text{ satisfying \eqref{eqn:for 2}} \}$.

\end{lemma}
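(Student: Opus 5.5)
We need to show the map $\CS^-_{<\b0} \to \CS^-/\bar{J}^{\neq 0}(\bpsi)$ is surjective with kernel $J^{\neq 0}(\bpsi)$. The kernel statement is immediate: by definition $J^{\neq 0}(\bpsi)$ consists of those $F\in\CS^-_{<\b0}$ satisfying \eqref{eqn:for 2}, and $\bar J^{\neq 0}(\bpsi)$ consists of those $F\in\CS^-$ satisfying the same condition. Hence $J^{\neq 0}(\bpsi)=\bar J^{\neq 0}(\bpsi)\cap \CS^-_{<\b0}$, and the induced map is injective. All the work is in surjectivity, i.e. in proving
\[
\CS^-_{-\bn} = \CS_{<\b0|-\bn} + \bar J^{\neq 0}(\bpsi)_{\bn}
\]
for every $\bn$.

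For surjectivity I would use the shift automorphisms. First, note that for $N \gg 0$ the test space $\sigma_{N\bone}(\CS_{\geq\b0|\bn})$ pairs with $F$ through a twist by $\prod z_{ia}^{N}$. Using \eqref{eqn:pairing shuffle formula}, the pairing $\langle E\prod\psi_i(z_{ia}),S(F)\rangle$ is computed by a contour integral. Because $\psi_i$ is rational, a large shift makes the integral insensitive to the behavior of $F$ near $0$ beyond a fixed order. Concretely, I would show the following. For any $F\in\CS^-_{-\bn}$ and $M$ large, $F$ differs from $\sigma_{-M\bone}$-type modifications, or from elements of $\CS_{<\b0|-\bn}$, by something annihilated by all $E\in\sigma_{N\bone}(\CS_{\geq \b0|\bn})$ for $N\gg0$.

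A cleaner route is the slope factorization $\CS^- = \CS^-_{<\b0}\otimes\CS^-_{\geq\b0}$, which is available from \cite{N Cat} as the negative analogue of the factorization of $\CA^\geq$. One then checks that any product $F'\ast G$ with $G\in\CS^-_{\geq\b0}$ of positive degree acts on the highest weight vector in a way that is killed by condition \eqref{eqn:for 2}. Via the Hopf pairing properties \eqref{eqn:bialgebra 1} and \eqref{eqn:drinfeld double relation}, this amounts to the following claim: pairing elements of $\sigma_{N\bone}(\CS_{\geq\b0})\cdot\prod\psi_i$ against $S(G)$ reduces, after coproduct expansion, to pairings of pieces of slope $\geq N$ against pieces of slope $\ge0$ but bounded. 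These vanish once $N$ exceeds the bounded slope, except for the degree-zero term.

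Granting this, every element of $\CS^-_{-\bn}$ is congruent modulo $\bar J^{\neq0}(\bpsi)$ to an element of $\CS^-_{<\b0}$, which gives surjectivity.

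The main obstacle is the vanishing statement. One has to control the coproduct \eqref{eqn:coproduct shuffle minus} expanded in $|z_{ia}|\ll|z_{jb}|$ and make the slope bounds uniform in $N$. I would first try to prove it via the limit characterization of $\CS_{<\b0}$ and $\CS_{\geq\b0}$ given in Subsection \ref{sub:slopes}. The idea is to move contours in the integral \eqref{eqn:pairing shuffle formula} and observe that, for $N$ large, no residues at $0$ or $\infty$ survive for the $\CS^-_{\geq\b0}$ factors. Since the paper cites \cite[Remark 4.5]{N Char} for this lemma, I expect this contour argument to be exactly what is needed.
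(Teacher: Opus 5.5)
\paragraph{Injectivity is fine; surjectivity has a gap.} Your argument that $J^{\neq 0}(\bpsi)=\bar J^{\neq 0}(\bpsi)\cap\CS^-_{<\b0}$ is correct. The problem is surjectivity: the key claim of your ``cleaner route'' is false. Elements $F'*G$ with $G\in\CS^-_{\geq\b0}$ of nonzero degree do \emph{not} lie in $\bar J^{\neq 0}(\bpsi)$ in general.

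Already in degree $-\bs^i$ with $\psi_i(z)=\frac{z}{z-a}$, $a\in\BC^*$, this fails. There $\CS_{-\bs^i}=\BC[z^{\pm1}]$ and $\CS_{<\b0|-\bs^i}=z\BC[z]$. Testing \eqref{eqn:for 2} on $E=z^d$, $d\geq N$, it reads $a^dF(a)=0$ up to a nonzero constant. Hence $\bar J^{\neq 0}(\bpsi)_{\bs^i}=\{F : F(a)=0\}$.

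Under your factorization, $\CS^-_{\geq\b0}$ must have a nonzero element $G$ in degree $-\bs^i$, because $z\BC[z]\neq\BC[z^{\pm1}]$. Since $\CS^-_{\geq\b0}$ does not depend on $\bpsi$, this $G$ violates $G(a)=0$ for generic $a$, so your vanishing claim fails already for $F'=1$. For instance, $f_{i,0}=1\notin\bar J^{\neq0}(\bpsi)$. What is true is only the congruence $1\equiv a^{-1}z$ modulo $\bar J^{\neq0}(\bpsi)$.

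The concrete form of your first route fails for the same reason: $F-\sigma_{-M\bone}(F)=F\cdot(1-z^M)$ lies in $\bar J^{\neq 0}(\bpsi)$ only if $a^M=1$ or $F(a)=0$. The contour argument in your last paragraph only shows that residues at $0$ and $\infty$ drop out. That gives the description \eqref{eqn:condition borel} of $\bar J^{\neq0}(\bpsi)_{\bn}$ by finitely many jet conditions at points of $(\BC^*)^{\bn}$. It does not show that these conditions can be met by elements of $\CS^-_{<\b0}$, which is exactly the surjectivity you need.

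\paragraph{The missing idea.} Multiplication by $\prod_{i,a}z_{ia}$ is invertible on the quotient, because the quotient is finite-dimensional and supported away from $0$. Let $e$ denote multiplication by $\prod_{i,a}z_{ia}$, that is, $\sigma_{-\bone}$ on $\CS^-$. The following four facts hold.
\begin{enumerate}
\item $e^{\pm1}$ preserve $\bar J^{\neq0}(\bpsi)$. Shifting by $\bone$ only replaces $N$ by $N\pm1$ in \eqref{eqn:for 2}, since $\sigma$ is a Hopf automorphism compatible with the pairing. Alternatively, read it off \eqref{eqn:condition borel}, where all $d_a\in\BZ$ occur.
\item $e$ preserves $\CS^-_{<\b0}$.
\item For each $F\in\CS_{-\bn}$ one has $e^MF\in\CS_{<\b0|-\bn}$ for $M\gg0$, since $F(\xi z_{\bm},z_{\text{rest}})$ has a pole of bounded order at $\xi=0$.
\item $L^{\neq0}(\bpsi)_{\bn}$ is finite-dimensional. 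For example, it is a quotient of $L(\bpsi)_{\bn}$, because $\sigma_{N\bone}(\CS_{\geq\b0|\bn})\subseteq\CS_{\geq\b0|\bn}$ for $N\geq 0$ gives $J(\bpsi)\subseteq J^{\neq0}(\bpsi)$.
\end{enumerate}

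Now let $Q=\CS_{-\bn}/\bar J^{\neq0}(\bpsi)_{\bn}$, and let $Q^+\subseteq Q$ be the image of $\CS_{<\b0|-\bn}$, which is $L^{\neq0}(\bpsi)_{\bn}$ by your injectivity. Then $e$ is an automorphism of $Q$ that preserves the finite-dimensional subspace $Q^+$, so $eQ^+=Q^+$. Therefore $[F]=e^{-M}[e^MF]\in e^{-M}Q^+=Q^+$.

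Equivalently, on $Q^+$ the operator $e^{-1}$ is a polynomial in $e$. This is the step that turns your shift heuristic into a proof, replacing the false vanishing statement by a congruence.
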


\medskip

\subsection{Residues}
\label{sub:residues}

We wish to describe $L^{\neq 0}(\bpsi)$ as a vector space. By Lemma \ref{lem:iso}, this vector space is as explicit as are $\CS^-$ (see \eqref{eqn:generators} and Examples \ref{ex:finite type}, \ref{ex:almost} and \ref{ex:simply laced}) and $\bar{J}^{\neq 0}(\bpsi)$. To describe the latter vector space, we will call 
$$
i_1,\dots,i_n \in I \quad \text{an ordering of} \quad \bn \in \nn
$$
if $\bs^{i_1}+\dots+\bs^{i_n} = \bn$, see \eqref{eqn:generators}. In this case, for any $F \in \CS_{-\bn}$ we will use
$$
F(z_1,\dots,z_n)
$$
to mean the fact that each variable $z_a$ is plugged into one of the variables $z_{i_a\bullet_a}$ of $F$ (the choice of $\bullet_1,\dots,\bullet_n \in \{1,2,\dots\}$ does not matter due to the color-symmetry of $F$, as long as $\bullet_a \neq \bullet_b$ whenever $a \neq b, i_a = i_b$). As proved in \cite{N Cat, N Char}, we have
\begin{equation}
\label{eqn:condition borel}
F \in \bar{J}^{\neq 0}(\bpsi)_{\bn}  \quad \Leftrightarrow  \quad \int_{z_1} \dots \int_{z_n} \frac {z_1^{d_1} \dots z_n^{d_n} F (z_1,\dots,z_n)}{\prod_{1\leq a < b \leq n} \zeta_{i_bi_a} \left(\frac {z_b}{z_a} \right)} \prod_{a=1}^n \psi_{i_a}(z_a) = 0
\end{equation}
for all orderings $i_1,\dots,i_n$ of $\bn$ and all $d_1,\dots,d_n \in \BZ$. In the formula above, the contours of integration run along the difference of two circles, one centered around $\infty$ and one centered around 0 (with $z_n$ much closer to these two singularities than $z_1$). By the residue theorem, we therefore conclude that 
\begin{equation}
\label{eqn:big intersection}
\bar{J}^{\neq 0}(\bpsi)_{\bn} = \bigcap_{\bx \in (\BC^*)^{\bn}} \bar{J}^{\neq 0}(\bpsi)_{\bx}
\end{equation}
where for any
\begin{equation}
\label{eqn:point x}
\bx = (x_{i1},\dots,x_{in_i})_{i \in I} \in (\BC^*)^{\bn} = \prod_{i \in I} (\BC^*)^{n_i}/S_{n_i}
\end{equation}
we define
\begin{equation}
\label{eqn:j x}
\bar{J}^{\neq 0}(\bpsi)_{\bx} = 
\end{equation}
$$
\left\{F \in \CS_{-\bn} \Big| \underset{z_n=x_n}{\text{Res}} \dots \underset{z_1=x_1}{\text{Res}} \frac {z_1^{d_1} \dots z_n^{d_n} F (z_1,\dots,z_n)}{\prod_{1\leq a < b \leq n} \zeta_{i_bi_a} \left(\frac {z_b}{z_a} \right)} \prod_{a=1}^n \psi_{i_a}(z_a) = 0, \ \forall d_1,\dots,d_n \in \BZ \right\}
$$
Above, we let $x_1,\dots,x_n$ denote any ordering of the entries of $\bx$, in accordance with any ordering $i_1,\dots,i_n$ of $\bn$. From the formula above, we see that $F \in \bar{J}^{\neq 0}(\bpsi)_{\bx}$ if and only if finitely many linear combinations of the derivatives of $F$ vanish at the point \eqref{eqn:point x}. Thus, $\CS^- / \bar{J}^{\neq 0}(\bpsi)_{\bx}$ is a finite-dimensional ring which is annihilated by a sufficiently high power of the maximal ideal of the point $\bx \in (\BC^*)^{\bn}$. Since only finitely many points $\bx$ will produce a non-trivial residue in formula \eqref{eqn:j x} (this is because the rational functions $\psi_i$ and $\zeta_{ij}$ can only produce finitely many poles), then we conclude from \eqref{eqn:big intersection} that
\begin{equation}
\label{eqn:decomposition x}
L^{\neq 0}(\bpsi)_{\bn} = \bigoplus_{\bx \in (\BC^*)^{\bn}} L^{\neq 0}(\bpsi)_{\bx}
\end{equation}
where
\begin{equation}
\label{eqn:l x}
L^{\neq 0}(\bpsi)_{\bx} = \CS_{-\bn} \Big/ \bar{J}^{\neq 0}(\bpsi)_{\bx}
\end{equation}
The decomposition \eqref{eqn:decomposition x} is precisely the one by eigenvalues of the loop Cartan subalgebra $\{\ph_{i,0}^+,\ph_{i,1}^+,\dots\}_{i \in I}$, and indeed as shown in \cite{N Cat} we have
\begin{equation}
\label{eqn:decomposition x q character}
\chi_q(L^{\neq 0}(\bpsi)) = [\bpsi] \sum_{\bn \in \nn} \sum_{\bx \in (\BC^*)^{\bn}} \dim_{\BC} \left(L^{\neq 0}(\bpsi)_{\bx}\right) \prod_{i \in I} \prod_{a=1}^{n_i} A_{i,x_{ia}}^{-1}
\end{equation}
where $A_{i,x}$ denotes the well-known $\ell$-weight of \cite{FR, FM}. In our conventions, it is given by the $I$-tuple of rational functions
\begin{equation}
\label{eqn:a weight}
A_{i,x}^{-1} = \left[ \left( \frac {z-xq^{d_{ij}}}{zq^{d_{ij}}-x}\right)_{j \in I} \right]
\end{equation}
Thus, the dimensions of the vector spaces \eqref{eqn:l x} provide the coefficients of the $q$-character of simple modules when expanded in the basis of monomials in the $A_{i,x}^{-1}$'s.

\bigskip

\section{The shifted category $\CO^{\sh}$}
\label{sec:shifted}

 In what follows, the symmetrizable Kac-Moody Lie algebra $\fg$ and $\br \in \zz$ will be arbitrary. We fix $\br = (r_i)_{i \in I}$ as above, and associate to it the integral coweight
\begin{equation}
\label{eqn:integral weight}
\mu = \sum_{i \in I} r_i \omega_i^\vee
\end{equation}
We will consider the shifted quantum loop algebra $\UU^{\mu}$ of \cite{FT}. By analogy with the previous Section, we construct a simple module
$$
\UU^{\mu} \curvearrowright L^{\sh}(\bpsi)
$$
for any $\ell$-weight $\bpsi$ with $\br = \ord \bpsi$. We will show that $L^{\sh}(\bpsi)$ is isomorphic to the same-named simple module in the shifted category $\CO$ defined in \cite{H Shifted}, and then we will prove Theorems \ref{thm:main intro} and \ref{thm:technical intro}. 

\medskip

\subsection{The shifted quantum loop algebra} 
\label{sub:shifted} 

It is well-known that the shifted quantum loop algebra $\UU^{\mu}$ has the same positive, negative and Cartan subalgebras as $\UU$, but differs in the way they are glued. In fact, we can define the shifted quantum loop algebra as a shifted Drinfeld double, in the following sense. We recall that $\br$ and $\mu$ are always related by formula \eqref{eqn:integral weight}.

\medskip

\begin{definition}
\label{def:shifted}

For any symmetrizable Kac-Moody Lie algebra $\fg$ and any $\br \in \zz$, we consider the $\mu$ shifted Drinfeld double
\begin{equation}
	\label{eqn:shifted quantum loop algebra}
	\UUsh = \UUg \otimes \UUl
\end{equation}
where the multiplication on the vector space in the RHS is controlled by the following shifted version of relations \eqref{eqn:drinfeld double relation}
\begin{equation}
\label{eqn:shifted drinfeld double relation}
\begin{split}
ba = \Big \langle \sigma_{-\br}(a_1), S(b_1) \Big \rangle a_2b_2 \Big \langle a_3, b_3 \Big \rangle \quad \Leftrightarrow \\ 
\Leftrightarrow \quad ab = \Big \langle \sigma_{-\br}(a_1), b_1 \Big \rangle b_2a_2 \Big \langle a_3, S(b_3) \Big \rangle
\end{split}
\end{equation}
with $\sigma_{\br}$ being the automorphism $e_{i,d} \mapsto e_{i,d+r_i}, f_{i,d} \mapsto f_{i,d-r_i}, \ph_{i,d}^{\pm} \mapsto  \ph_{i,d}^{\pm}, \kappa^\pm_{\ba} \mapsto \kappa^\pm_{\ba}$.

\end{definition}

\medskip

 We denote the Cartan elements in $\UUg$ and $\UUl$ by $\kappa_{\ba}^+$ and $\kappa_{\ba}^-$, respectively, and do not require them to be set equal to each other as in \eqref{eqn:al plus}-\eqref{eqn:al minus}. The consequence of this convention is that the usual quantum loop algebras are related to shifted quantum loop algebras by
$$
\UU = \UU^0 \Big/ (\kappa_\ba^+ = \kappa_\ba^-)_{\ba \in \fh}
$$
As a consequence of relation \eqref{eqn:shifted drinfeld double relation}, the shifted analogue of relation \eqref{eqn:rel 3 loop} is
\begin{equation}
\label{eqn:rel 3 loop shifted}
\left[ e_i(x), f_j(y) \right] = \frac {\delta_{ij}\delta \left(\frac xy \right)}{q_i - q_i^{-1}}  \Big( \ph_i^+(x) - y^{-r_i} \ph_i^-(y) \Big)
\end{equation}
but relations \eqref{eqn:rel 0 loop}-\eqref{eqn:rel 2 loop} (as well as their analogues when $e$'s are replaced by $f$'s) hold as stated. Thus, we leave it as an exercise to the reader to check that there is an isomorphism
\begin{equation}
\label{eqn:conventions}
\UU^{\mu} \cong \mathcal{U}_{0,\mu} \left(\widehat{\fg} \right)
\end{equation}
where the right-hand side is the algebra defined in \cite[Subsection 3.1]{H Shifted}. Note that our Cartan currents $\ph^\pm_i(z)$ differ from those of \loccit by the coordinate change $z \rightarrow z^{-1}$ and an overall renormalization, which ensures that our $\ph^\pm_i(z)$ start at $z^0$. 

\medskip

\begin{remark} 

In \loccitt, the algebra in the RHS of \eqref{eqn:conventions} was only considered for finite type $\fg$, which is why the Drinfeld-Serre relation \eqref{eqn:drinfeld-serre} was imposed (see \cite[formula (3.6)]{H Shifted}). For a general symmetrizable Kac-Moody Lie algebra $\fg$, this relation must be replaced by a system of generators of $\emph{Ker } \widetilde{\Upsilon}^\pm$, see for example relations \eqref{eqn:rho} for simply-laced $\fg$. 

\end{remark} 

\medskip

 Since the positive and negative halves of $\UU^{\mu}$ are the same as those of $\UU$, we will still use the shuffle algebra realization of Subsection \ref{sub:small shuffle}, and denote the subalgebras generated by $\{e_{i,d}\}$ and $\{f_{i,d}\}$ respectively by
\begin{equation}
\label{eqn:iso shuffle shifted}
\UUpm^{\mu} \cong \CS^{\pm}.
\end{equation}
It is convenient to replace the Cartan elements $\{\ph_{i,d}^\pm\}^{i \in I}_{d \geq 0}$ by $\{\kappa_{\ba}^\pm,p_{i,u}\}^{\ba \in \fh}_{i \in I, u \in \BZ \backslash 0}$ via
\begin{equation}
\label{eqn:k and p}
\ph^\pm_i(x) = \kappa_{\pm \bs^i}^{\pm} \exp \left( \sum_{u=1}^{\infty} \frac {p_{i,\pm u}}{ux^{\pm u}} \right).
\end{equation}
This is because the commutation relations between these new Cartan elements and $\CS^\pm \cong \UUpm$ can be more succintly written as
\begin{equation}
\label{eqn:k shuffle}
\kappa^+_{\ba} X = X \kappa^+_{\ba} q^{(\pm \bn, \ba)}, \quad \kappa^-_{\ba} X = X \kappa^-_{\ba} q^{(\pm \bn, \ba)}
\end{equation}
\begin{equation}
\label{eqn:p shuffle}
\left[p_{i,u}, X \right] = \pm X \sum_{j \in I} \left(z_{j1}^u+\dots+z_{jn_j}^u\right)(q^{ud_{ij}} - q^{-ud_{ij}})
\end{equation}
for any $X(z_{j1},\dots,z_{jn_j})_{j \in I} \in \CS_{\pm \bn}$. As before, these relations take the same form in the shifted and non-shifted cases, and $\mu = \sum_{i \in I} r_i \omega_i^\vee$ only plays a role in \eqref{eqn:rel 3 loop shifted}.

\medskip

\subsection{Shifted category $\CO$ and simple modules}
\label{sub:shifted category o}

For a general symmetrizable Kac-Moody Lie algebra $\fg$, the following analogue of Definition \ref{def:category o borel} was constructed by the first-named author.

\medskip

\begin{definition} 
\label{def:category o shifted}

(\cite{H Shifted}) Consider the category $\CO^{\esh}$ of complex representations 
\begin{equation}
\label{eqn:rep shifted}
\UUsh \curvearrowright V
\end{equation}
which admit a decomposition \eqref{eqn:weight decomposition km} with finite-dimensional weight spaces \eqref{eqn:weight km}.

\end{definition}

\medskip

In \cite{H Shifted}, the first-named author constructed for any rational $\ell$-weight $\bpsi$ of order $\br$ (recall the notation $\mu = \sum_{i \in I} r_i \omega_i^\vee$) a unique simple module
\begin{equation}
\label{eqn:shifted simple}
\UUsh \curvearrowright L^{\sh}(\bpsi)
\end{equation}
generated by a single vector $\vac$ that satisfies 
\begin{equation}
\label{eqn:simple module relations 1 shifted}
\begin{split}
&\ph_i^+(z) \cdot \vac = \psi_i(z) \vac \qquad \text{ expanded near } z \sim \infty \\
&\ph_i^-(z) \cdot \vac = z^{r_i} \psi_i(z) \vac \quad \text{expanded near } z \sim 0
\end{split}\end{equation}
for all $i \in I$, and
\begin{equation}
\label{eqn:simple module relations 2 shifted}
e_{i,d}\cdot \vac = 0 
\end{equation}
for all $i \in I$, $d \in \BZ$.

\begin{theorem} The representation $L^{\esh}(\bpsi)$ is in $\CO^{\esh}$ if and only if $\bpsi$ is rational.
\end{theorem}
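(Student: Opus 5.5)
The plan is to prove the two implications separately, in both cases working with the explicit model of $L^{\sh}(\bpsi)$ as a quotient of $\CS^-$ by the annihilator of a contravariant pairing, as in Theorem \ref{thm:technical intro}.

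\emph{Necessity.} Suppose $L^{\sh}(\bpsi)$ lies in $\CO^{\sh}$. By \eqref{eqn:k shuffle} and \eqref{eqn:simple module relations 1 shifted}, for each $i \in I$ the vectors $f_{i,d} \cdot \vac$, $d \in \BZ$, all lie in the single weight space of weight $\bom - \bs^i$. That space is finite-dimensional, so there is a nontrivial relation $\sum_{d=0}^{N} c_d f_{i,d+k}\cdot\vac = 0$ for all $k$, after shifting by the operators $p_{i,u}$ using \eqref{eqn:p shuffle}, as in the classical argument of \cite{HJ}. Applying $e_{i,m}$ and using \eqref{eqn:rel 3 loop shifted} together with \eqref{eqn:simple module relations 2 shifted} gives
$$
e_{i,m} f_{i,d} \cdot \vac = \frac{1}{q_i-q_i^{-1}} \left( \psi^+_{m+d} - \psi^-_{m+d} \right) \vac,
$$
where $\psi^+_k$ are the coefficients of $\psi_i(z)$ expanded at $\infty$ and $\psi^-_k$ those of $z^{r_i}\psi_i(z)$ expanded at $0$, both shifted by $r_i$ as appropriate. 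The sequence $k \mapsto \psi^+_k - \psi^-_k$ therefore satisfies a linear recurrence with constant coefficients. Since $\psi^+_k = 0$ for $k<0$ and $\psi^-_k$ vanishes for $k$ large, the recurrence forces $\sum_k \psi^+_k z^{-k}$ to be the expansion of a rational function. In other words, the Hankel matrix has finite rank, and Kronecker's criterion applies. So $\bpsi$ is rational.

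\emph{Sufficiency.} Let $\bpsi$ be rational. I would realize $L^{\sh}(\bpsi)=\CS^-/J^{\sh}(\bpsi)$, where $J^{\sh}(\bpsi)_{\bn}$ is the kernel of the pairing $F \mapsto \langle e_{i_1,d_1}\cdots e_{i_n,d_n}\vac^*, F\vac\rangle$. The first step is to compute this pairing by commuting $e$'s past $f$'s using \eqref{eqn:shifted drinfeld double relation}. It should equal
$$
\Big(\int_{\infty}-\int_{0}\Big)\cdots\Big(\int_{\infty}-\int_{0}\Big)\ \frac {z_1^{d_1} \cdots z_n^{d_n}\, F(z_1,\dots,z_n)}{\prod_{a<b} \zeta_{i_bi_a} \left(\frac {z_b}{z_a} \right)}\ \prod_a \psi_{i_a}(z_a),
$$
up to the twist by $z^{r}$ that matches the two expansions in \eqref{eqn:simple module relations 1 shifted}. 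This mirrors \eqref{eqn:condition borel}. Because the integrand is a rational function, the residue theorem turns each difference of contours into a sum of residues at finitely many points $\bx \in (\BC^*)^{\bn}$. These points come only from the poles of the $\psi_i$ and of the $\zeta_{ij}$, and there are finitely many of them.

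Hence $J^{\sh}(\bpsi)_{\bn}=\bigcap_{\bx} J^{\sh}(\bpsi)_{\bx}$, where each $\CS_{-\bn}/J^{\sh}(\bpsi)_{\bx}$ is cut out by finitely many conditions on derivatives of $F$ at $\bx$, and is therefore finite-dimensional. Each weight space $L^{\sh}(\bpsi)_{\bom-\bn}=\CS_{-\bn}/J^{\sh}(\bpsi)_{\bn}$ is then finite-dimensional, and all weights lie in $\bom-\nn$. This is exactly the condition of Definition \ref{def:category o shifted}.

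The main obstacle is the first step of the sufficiency argument: establishing the pairing formula, with the correct placement of the shift $\sigma_{-\br}$ and the correct contours. It requires an induction on $n$ using the shifted relation \eqref{eqn:rel 3 loop shifted} and the coproduct \eqref{eqn:coproduct shuffle minus}. This is the step where the shifted case genuinely differs from the Borel one, in which only \eqref{eqn:for 2} entered.
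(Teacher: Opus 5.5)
Your proof is correct in outline. For sufficiency, though, it takes a different route from the paper.

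\textbf{What the paper does.} The paper gives no argument in the text. It invokes \cite[Theorem 4.12]{H Shifted}, whose proof runs the classical highest-weight arguments of \cite{HTG, CP94} in terms of generators and relations. The paper only observes that these arguments never use the Serre-type relations \eqref{eqn:quantum loop algebra quotient}, so they remain valid in any symmetrizable type. Your necessity argument is of exactly this classical kind: finite-dimensionality of the weight space $\bom-\bs^i$, shifting with $p_{i,\pm 1}$, then applying $e_{i,m}$ and \eqref{eqn:rel 3 loop shifted} to get a two-sided recurrence.

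\textbf{What your sufficiency argument does.} You derive finite-dimensionality from the paper's own later machinery:
\begin{enumerate}
\item the realization $L^{\sh}(\bpsi)=\CS^-/J^{\sh}(\bpsi)$ (Proposition \ref{prop:ideal}, Corollary \ref{cor:simple module});
\item the contour-integral formula for the pairing;
\item the residue argument behind \eqref{eqn:big intersection shifted}--\eqref{eqn:decomposition x shifted}.
\end{enumerate}
None of these uses the theorem, so there is no circularity. The step you leave open is precisely Lemma \ref{lem:antipode} together with \eqref{eqn:r on vac}--\eqref{eqn:r on vac 2}. There, $\sigma_{-\br}$ cancels the factor $z^{r_i}$ in the $\ph^-$-eigenvalue, so the same rational integrand $\prod_a \psi_{i_a}(z_a)$ sits on both the contours around $0$ and around $\infty$. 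That is what lets the residue theorem localize at finitely many $\bx$. The paper proves this not by induction on \eqref{eqn:rel 3 loop shifted} but by Hopf-pairing manipulations: the coproduct of $e_{i,d}$, the formula $S^{-1}(e_i(z)) = -e_i(z)\bar{\ph}^+_i(z)$, and commuting $\ph^+$ and $\bar{\ph}^+$ past the $e$'s.

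\textbf{What each route buys.}
\begin{itemize}
\item Your route is uniform with the rest of the paper. Because everything happens inside $\CS^-$, it is automatically blind to the unknown Serre-type relations. It also yields the $\ell$-weight decomposition over the points $\bx$ at the same time as finite-dimensionality.
\item The cited route is more elementary and does not need Lemma \ref{lem:antipode}.
\end{itemize}

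\textbf{A small point on necessity.} For the ``only if'' direction to have content, the $\ph^-$-eigenvalue must be treated as data independent of the $\ph^+$-eigenvalue; your argument implicitly does this. You should then conclude more than that $\sum_k \psi^+_k z^{-k}$ is rational: both series are expansions of one and the same rational function. This follows from your recurrence holding for all $k \in \BZ$. Multiplying the difference of the two expansions by $c(z)=\sum_d c_d z^d$ gives $0$, so $c(z)$ times either expansion is the same Laurent polynomial.
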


The proof of this statement in \cite[Theorem 4.12]{H Shifted} relies on the arguments of \cite[Theorem 4.9]{HTG} and \cite[Section 5]{CP94} for ordinary quantum affinizations, that work both for Cartan matrices of finite type or general type (as explained in \cite{HLMS}). Thus the proof in \cite{H Shifted} holds for general type (it does not involve the relations imposed in \eqref{eqn:quantum loop algebra quotient}, which are not explicitly known in general symmetrizable types).

\medskip

Using Drinfeld's topological coproduct, a fusion product was constructed for simple representations in category $\CO^{\text{sh}}$ (\cite{H Shifted}). The corresponding proofs also extend from finite to general symmetrizable types. 

\medskip

Note that representations in $\CO^{\sh}$ are not necessarily of finite length and do not 
necessarily have the Jordan-H\"older property. However, the multiplicity of a simple module in an object of category $\CO^{\sh}$ is well-defined, and thus the (topological) Grothendieck ring is well-defined. In finite types, the category of finite length modules is stable by fusion product (see \cite{HZ} and Theorem \ref{fls} below), but the analogous statement is not clear for general symmetrizable types.

\medskip

\subsection{Technical results}
\label{sub:technical}

We will now proceed to give a Verma-module like description of the simple module \eqref{eqn:shifted simple}, in order to compare it to the object $L^{\neq 0}(\bpsi)$ considered in the previous Section. We begin with the following analogues of the technical results \cite[Definition 4.3 and Proposition 4.5]{N Cat}, compare with \eqref{eqn:for 1}.

\medskip

\begin{definition}
\label{def:verma module}

For any rational $\ell$-weight $\bpsi$ of order $\br$, consider the representation
\begin{equation}
\UUsh \curvearrowright W^{\esh}(\bpsi)
\end{equation}
generated by a single vector $\vac$ modulo the relations
\begin{equation}
\label{eqn:two phis}
\begin{split}
&\ph_i^+(z) \cdot \vac = \psi_i(z) \vac \qquad \text{ expanded near } z \sim \infty \\
&\ph_i^-(z) \cdot \vac = z^{r_i} \psi_i(z) \vac \quad \text{expanded near } z \sim 0 \\
&\kappa^\pm_{\ba} \cdot \vac = q^{(\bom,\ba)} \vac
\end{split}
\end{equation}
for all $i \in I$, $\ba \in \fh$ (in the third equation, $\bom \in \fh$ is a weight as in Remark \ref{rem:enlarge simple}, even though we do not include it in our notation), and 
\begin{equation}
\label{eqn:e ann}
e_{i,d} \cdot \vac = 0
\end{equation}
for all $i \in I$, $d \in \BZ$.

\end{definition}

\medskip

 By the very nature of the Drinfeld double construction, we have a triangular decomposition (linear isomorphism)
\begin{equation}
\label{eqn:triangular shifted}
\UUsh =  \CS^+ \otimes \BC \left [\ph_{i,1}^\pm, \ph_{i,2}^\pm, \dots, \kappa^\pm_{\ba} \right ]_{i \in I, \ba \in \fh} \otimes \CS^-.
\end{equation}
We have a vector space isomorphism
\begin{equation}
\label{eqn:iso verma}
W^{\sh}(\bpsi) \cong \CS^-
\end{equation}
and so $W^{\sh}(\bpsi)$ inherits the $(-\nn)$-grading from $\CS^-$.

\medskip

\begin{definition}
\label{def:ideal} 

For any $\ell$-weight $\bpsi$ of order $\br$, consider the linear subspace
\begin{equation}
	\label{eqn:graded j}
J^{\esh}(\bpsi) = \bigoplus_{\bn \in \nn}	J(\bpsi)_{\bn} \subseteq  \bigoplus_{\bn \in \nn} \CS_{-\bn} = \CS^-
\end{equation}
consisting of those shuffle elements $F(z_{i1},\dots,z_{in_i})_{i \in I} \in \CS_{-\bn}$ such that 
\begin{equation}
\label{eqn:psi pairing shifted}
\left \langle E(z_{i1},\dots,z_{in_i})_{i \in I} \prod_{i \in I} \prod_{a=1}^{n_i} \psi_i(z_{ia}) , F_1 * S (F_2) \right \rangle = 0, \quad \forall E \in \CS_{\bn}
\end{equation}
where in the right-hand side we write $\Delta(F) = F_1 \otimes F_2$ for the coproduct \eqref{eqn:coproduct shuffle minus} in Sweedler notation \footnote{If we write $F_1(z_{i1},\dots,z_{im_i})_{i \in I}$ and  $F_2(z_{i,m_i+1},\dots,z_{in_i})_{i \in I}$ for the variables that appear in the two tensor factors of $\Delta(F) = F_1 \otimes F_2$, it is very important to keep in mind that the rational functions $\psi_i$ in \eqref{eqn:psi pairing shifted} must be expanded in the range 
\begin{equation}
\label{eqn:expansion}
|z_{i1}|,\dots,|z_{i m_i}| \ll |z_{i,m_i+1}|,\dots,|z_{in_i}|
\end{equation}
This is the reason why \eqref{eqn:psi pairing shifted} does not vanish identically, despite the fact that $F_1 * S(F_2) = 0$ in any topological Hopf algebra due to the properties of the antipode.}. Compare \eqref{eqn:psi pairing shifted} with \eqref{eqn:for 1}.

\end{definition}

\medskip

Let us explain why the pairing in \eqref{eqn:psi pairing shifted} is well-defined for any given $E$ and $F$, even if $F_1 \otimes F_2$ is an infinite sum. Due to the expansion in \eqref{eqn:coproduct shuffle minus}, all but finitely many of the summands $F_1 \otimes F_2$ will have the property that the homogeneous degree of $F_1$ is bounded below and that of $F_2$ is bounded above by any given number. Since $\langle E,F \rangle \neq 0$ only if the homogeneous degrees of $E$ and $F$ add up to 0, the choice of expansion in \eqref{eqn:expansion} implies that $E \prod_{i \in I} \prod_{a=1}^{n_i} \psi_i(z_{ia})$ pairs trivially with all but finitely many of the $F_1 * S(F_2)$. 

\medskip

\begin{proposition}
\label{prop:ideal} 

$J^{\esh}(\bpsi) \vac$ is the unique maximal graded $\UUsh$ submodule of $W^{\esh}(\bpsi)$. 

\end{proposition}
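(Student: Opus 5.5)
We will follow the classical Shapovalov-form argument, in the form used for \cite[Proposition 4.5]{N Cat}, adapted to the shifted double \eqref{eqn:shifted drinfeld double relation}. The first step is to describe the action of $\CS^+$ on $W^{\sh}(\bpsi) \cong \CS^-$ (see \eqref{eqn:iso verma}). For $E \in \CS_{\bm}$ and $F \in \CS_{-\bn}$ we commute $E$ past $F$ using \eqref{eqn:shifted drinfeld double relation}, written in the form $ab = \langle \sigma_{-\br}(a_1), b_1\rangle b_2 a_2 \langle a_3, S(b_3)\rangle$ and applied to $a = E$, $b = F$. The coproduct \eqref{eqn:coproduct shuffle plus} makes $a_2$ a product of $\ph^+$'s with an element of $\CS^+$. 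Any term in which that $\CS^+$ factor has positive degree kills $\vac$ by \eqref{eqn:e ann}. The surviving Cartan factors $\ph^+_j(z)$ act by $\psi_j(z)$ by \eqref{eqn:two phis}, and the factors coming from $a_1$ pair with $b_1$ through $\sigma_{-\br}$.

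Now specialize to $\bm = \bn$, so that $E \cdot F\vac$ is a scalar multiple of $\vac$. Call this scalar $\langle E, F\rangle_{\bpsi}$. The computation above gives a formula of the type
$$
E \cdot F \vac = \Big\langle E \prod_{i \in I} \prod_{a=1}^{n_i} \psi_i(z_{ia}), \, F_1 * S(F_2) \Big\rangle \vac .
$$
The shift $\sigma_{-\br}$ in the first pairing is exactly compensated by the factor $z^{r_i}$ in the $\ph^-$-eigenvalue in \eqref{eqn:two phis}. As a result, the Cartan contributions on the $F_1$ side and on the $F_2$ side combine into the single series $\psi_i$, but expanded in the two different regions \eqref{eqn:expansion}. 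This bookkeeping step is where we expect the main obstacle. One must track carefully which $\ph^\pm$ appear in $\Delta^{(2)}(E)$ and in $\Delta^{(2)}(F)$, and where the $\psi_i$ are expanded. One must also check that the infinite sums converge in the sense explained after Definition \ref{def:ideal}. We would first verify the formula for $E = e_{i,d}$, $F = f_{i,d'}$, where it must reproduce \eqref{eqn:rel 3 loop shifted}. We would then argue inductively using multiplicativity, namely \eqref{eqn:bialgebra 1}--\eqref{eqn:bialgebra 2} and the fact that the action is an algebra action.

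With the formula in hand, we prove that $F \in J^{\sh}(\bpsi)$ if and only if $\CS^+_{\bn} \cdot F\vac = 0$. We then show that $J^{\sh}(\bpsi)\vac$ equals the set $\{F\vac : \CS^+ \cdot F \vac \text{ has zero component in degree } \b0\}$. It is graded, since the condition is degree by degree.

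\textbf{Submodule.} Invariance under $\CS^-$ holds because $E \cdot (F'F)\vac$ can be rewritten by moving $E$ through $F'$: one obtains terms of the form $F'' \cdot (\text{elements of } \CS^+ \text{ and Cartan}) \cdot F\vac$. The degree-$\b0$ part of such a term only involves $E' \cdot F\vac$ with $E' \in \CS_{\bn}$, which vanishes. Invariance under the Cartan generators follows from \eqref{eqn:k shuffle}, \eqref{eqn:p shuffle} and the fact that the $\ph^\pm$ act on $\CS_{-\bn}\vac$ triangularly with respect to degree. Invariance under $\CS^+$ is immediate from the characterization, since $\CS^+ \cdot \CS^+ \subseteq \CS^+$.

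\textbf{Maximality.} Let $M$ be any graded submodule not containing $\vac$. Every $v = F\vac \in M$ of degree $-\bn$ then satisfies $E \cdot v \in M_{\b0} = 0$ for all $E \in \CS_{\bn}$, hence $v \in J^{\sh}(\bpsi)\vac$. Moreover, $J^{\sh}(\bpsi)\vac$ is proper, since $\vac \notin J^{\sh}(\bpsi)\vac$ (take $E = 1$). This proves uniqueness and maximality.
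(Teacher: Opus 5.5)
Your proposal is correct, but it organizes the argument differently from the paper.

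\textbf{Your route.} You put the contravariant-form identity
\[
E\cdot F\vac=\Big\langle E\prod\psi,\ F_1*S(F_2)\Big\rangle\vac,\qquad E\in\CS_{\bn},\ F\in\CS_{-\bn},
\]
first. You then identify $J^{\sh}(\bpsi)\vac$ with the radical $\{v:(\CS^+v)_{\b0}=0\}$, and derive all three invariances from the module structure:
\begin{itemize}
\item $\CS^+$-invariance is associativity, $E'(Ev)=(E'*E)v$.
\item $\CS^-$-invariance is a degree count after moving $E$ past $F'$ with \eqref{eqn:shifted drinfeld double relation}. The Cartan factors land between the $\CS^-$ and $\CS^+$ parts, so only $E''\in\CS_{\bn}$ acting on $F\vac$ contributes in degree $\b0$.
\item Cartan invariance is best obtained by commuting $p_{i,u},\kappa^\pm_{\ba}$ past $E$ using \eqref{eqn:p shuffle} with $X=E\in\CS^+$. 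This leaves $E\rho\in\CS_{\bn}$ acting on $F\vac$. Your phrase ``triangularly with respect to degree'' is not quite right: Cartan elements preserve the degree exactly.
\end{itemize}

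\textbf{The paper's route.} The paper uses this identity only in the maximality step. It proves the invariances directly as identities of Hopf pairings:
\begin{itemize}
\item for $\CS^-$, via multiplicativity of $\Delta$, anti-multiplicativity of $S$ and Claim \ref{claim:pairing};
\item for the Cartan part, via the contour-integral form of the pairing, moving $\rho$ from $F$ to $E$;
\item for $\CS^+$, by computing $E\cdot F\vac$ in all degrees and then re-deriving, through \eqref{eqn:bialgebra 1}--\eqref{eqn:bialgebra 2}, what is in effect $E'(EF\vac)=(E'*E)F\vac$.
\end{itemize}

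\textbf{Comparison.} Your route needs only the single degree-$\bn$ computation and avoids Claim \ref{claim:pairing} and the integral argument. The paper's route makes each invariance an explicit, self-contained pairing identity.

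The shared core is the same: $\sigma_{-\br}$ is compensated by the $z^{r_i}$ in the $\ph^-$-eigenvalue, and $\psi$ is expanded in two regions. This is exactly the paper's passage from \eqref{eqn:base} to \eqref{eqn:r on vac 2}. Done directly in that way, it yields the formula at once, so your fallback of checking it on $e_{i,d},f_{i,d'}$ and inducting is unnecessary. Your maximality argument is the paper's, stated contrapositively.
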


\medskip 

\begin{proof} We need to show that the subspace $J^{\sh}(\bpsi) \vac \subseteq W^{\sh}(\bpsi)$ is preserved by 

\medskip

\begin{enumerate}[leftmargin=*]

\item left multiplication with $\CS^-$,

\medskip

\item left multiplication with $\{\kappa_\ba^\pm,p_{i,u}\}_{\ba \in \fh, i \in I, u \neq 0}$,

\medskip

\item left multiplication with $\CS^+$.

\end{enumerate}

\medskip

 To prove (1), let us consider any $F' \in \CS^-$, $F'' \in  J^{\sh}(\bpsi)$ and $E \in \CS^+$. The fact that the coproduct is a homomorphism, the antipode is an anti-homomorphism and property \eqref{eqn:bialgebra 1} imply that
\begin{equation}
\label{eqn:*}
\begin{split}
& \left \langle E \prod \psi , (F'* F'')_1 * S \left( (F' * F'')_2 \right) \right \rangle \\
=& \left \langle E \prod \psi , F'_1 * F''_1 * S (F_2'') * S(F_2') \right \rangle  \\
=& \left \langle \Delta^{(3)} (E \prod \psi) ,F'_1 \otimes F''_1 \otimes S (F_2'') \otimes S(F_2') \right \rangle
\end{split}
\end{equation}
where $E\prod \psi$ is shorthand for $E(z_{i1},\dots,z_{in_i})_{i \in I} \prod_{i \in I} \prod_{a=1}^{n_i} \psi_i(z_{ia})$. The following Claim is an easy consequence of \eqref{eqn:bialgebra 2} and \eqref{eqn:antipode pairing}, which we leave as an exercise to the reader (see the analogous statement in \cite{N Cat}).

\medskip

\begin{claim}
\label{claim:pairing}

If \eqref{eqn:psi pairing shifted} holds for all $E \in \CS^+$, then it also holds for all $E \in \CS^{\geq}$.

\end{claim}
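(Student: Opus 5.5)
The plan is to reduce from $\CS^{\geq}$ to $\CS^+$ by moving the Cartan part of $E$ across the pairing. By the triangular decomposition, every element of $\CS^{\geq}$ is a linear combination of products $h \cdot E$, with $E \in \CS_{\bn}$ and $h$ a monomial in the $\kappa^+_{\ba}$ and the $p_{i,u}$, $u>0$ (equivalently, in the $\ph^+_{i,d}$ via \eqref{eqn:k and p}). Relations \eqref{eqn:k shuffle}--\eqref{eqn:p shuffle} let us put the Cartan factor on either side of $E$, at the cost of rescaling by $q^{(\pm\bn,\ba)}$ or adding multiples of $E$ times power sums. I would induct on the degree of $h$ in the $p_{i,u}$. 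For the inductive step it then suffices to treat a single factor $h = \kappa^+_{\ba}$ or $h = p_{i,u}$, applied to an element for which the vanishing \eqref{eqn:psi pairing shifted} is already known.

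The key step is to compute $\langle h \cdot E\prod\psi, b\rangle$ with $b = F_1 * S(F_2)$, using \eqref{eqn:bialgebra 2}. This gives $\langle h \otimes E\prod\psi, \Delta^{\op}(b)\rangle$, a sum of terms $\langle h, b_{(2)}\rangle \langle E\prod\psi, b_{(1)}\rangle$. Since $h$ has degree $0$, only those terms of $\Delta^{\op}(b)$ whose $h$-slot is purely Cartan contribute. By \eqref{eqn:coproduct shuffle minus}, \eqref{eqn:antipode f} and \eqref{eqn:antipode phi}, these Cartan factors are products of $\ph^-_j(z_{jb})^{\pm1}$ and $\kappa^-$'s.

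For $h = \kappa^+_{\ba}$, which is grouplike, the contribution collapses to a scalar $q^{-(\ba, \cdot)}$ determined by the degree $-\bn$. So $\langle \kappa^+_{\ba}E\prod\psi, b\rangle$ is a constant multiple of $\langle E\prod\psi, b\rangle$, which vanishes by hypothesis.

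For $h = p_{i,u}$, which is primitive, pairing with $\ph^-_j(z)$ via \eqref{eqn:pairing ph} (after taking logarithms) returns a multiple of $z^u(q^{ud_{ij}}-q^{-ud_{ij}})$. Summed over the $\ph^-$ factors, we expect the net effect to be
$$
\langle p_{i,u}E\prod\psi, b\rangle = \pm\Big\langle E \cdot \sum_{j}(z_{j1}^u+\dots+z_{jn_j}^u)(q^{ud_{ij}}-q^{-ud_{ij}}) \prod\psi, b\Big\rangle,
$$
up to a term proportional to $\langle E\prod\psi,b\rangle$ coming from the unit term. This is the pairing-side shadow of \eqref{eqn:p shuffle}.

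To finish, multiplication by a color-symmetric Laurent polynomial such as a power sum preserves $\CS^+$, as \eqref{eqn:p shuffle} itself shows, since $[p_{i,u},E] \in \CS^+$. It also commutes with multiplication by $\prod\psi$. So the right-hand side is of the form $\langle E'\prod\psi,b\rangle$ with $E'\in\CS_{\bn}$, which vanishes by hypothesis, and the induction closes. The same argument should be compatible with \eqref{eqn:antipode pairing}, which I would use if it is more convenient to move $h$ onto $S(F_2)$ instead.

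The main obstacle is the bookkeeping for $h = p_{i,u}$. The element $b = F_1*S(F_2)$ is an infinite sum with the prescribed expansion \eqref{eqn:expansion}, and one must check two things: that the $\ph^-$ factors contributed by $\Delta(F)$ and by $S(F_2)$ are paired with $p_{i,u}$ consistently with that expansion, and that the power sums in all variables $z_{ja}$ (both the $F_1$ and the $F_2$ variables) assemble as claimed rather than partially cancelling. I would first verify this in the single-factor case $\bn = \bs^j$ and then argue by multiplicativity of $\Delta$.
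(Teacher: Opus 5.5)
Your route is the one the paper intends; it calls the claim an easy consequence of \eqref{eqn:bialgebra 2} and \eqref{eqn:antipode pairing}. You write elements of $\CS^{\geq}$ as Cartan monomials times elements of $\CS^+$, split the pairing with \eqref{eqn:bialgebra 2}, and keep only the terms of the coproduct of $F_1 * S(F_2)$ whose Cartan slot has degree zero. However, the step you defer as ``bookkeeping'' is the whole content of the claim, and the outcome you predict for it is not what happens.

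With $h$ on the left nothing survives. Already for $\bn = \bs^j$, each term of $F_1 * S(F_2)$ is $\pm F \ph^-_j(z)^{-1}$, and $\Delta^{\op}(F \ph^-_j(z)^{-1}) = F\ph^-_j(z)^{-1} \otimes \ph^-_j(z)^{-1} + 1 \otimes F \ph^-_j(z)^{-1}$. So $p_{i,u}$ pairs only with $1$ and gives $0$. Likewise $\kappa^+_{\ba}$ gives the scalar $1$ rather than $q^{-(\ba,\bn)}$, because the $\ph^-$'s created by $\Delta$ cancel those created by $S$. Your conclusion survives, but your displayed identity for $p_{i,u}E$ is false as stated. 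Also, it is coassociativity rather than multiplicativity of $\Delta$ that makes the general case work: $F \mapsto F_1 * S(F_2)$ with the split-dependent expansion \eqref{eqn:expansion} is not multiplicative in $F$.

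Here is the clean argument. Write $\Delta^{(3)}(F) = F_{(1)} \otimes F_{(2)} \otimes F_{(3)} \otimes F_{(4)}$ and use $\Delta \circ S = (S \otimes S) \circ \Delta^{\op}$ to get
\[
\Delta\big(F_1 * S(F_2)\big) = F_{(1)} * S(F_{(4)}) \otimes F_{(2)} * S(F_{(3)}).
\]
Iterating \eqref{eqn:coproduct shuffle minus}, a typical term of $\Delta^{(3)}(F)$ is $G_1 \otimes G_2 \ph_{[1]} \otimes G_3 \ph_{[1]}\ph_{[2]} \otimes G_4 \ph_{[1]}\ph_{[2]}\ph_{[3]}$. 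Here $G_k \in \CS^-$ and $\ph_{[k]}$ is the product of $\ph^-_j(z)$ over the variables of $G_k$.

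\emph{Cartan factor on the left.} By \eqref{eqn:bialgebra 2}, $h$ pairs with $F_{(2)} * S(F_{(3)})$. This has degree zero only if $G_2 = G_3 = 1$, and then it equals $\ph_{[1]} S(\ph_{[1]}) = 1$, since both factors are in the same variables. The remaining factor $G_1 \otimes G_4 \ph_{[1]}$ is just $\Delta(F)$ with the same split, so the expansion \eqref{eqn:expansion} is untouched. Hence $\langle hX \prod \psi, F_1 * S(F_2)\rangle = \varepsilon(h) \langle X \prod\psi, F_1 * S(F_2)\rangle$ for every $X$, and no induction or absorption is needed.

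\emph{Cartan factor on the right.} Now $h$ pairs with $F_{(1)} * S(F_{(4)})$. Its degree-zero part is $S(\ph_{[2]}\ph_{[3]})$, the inverse of the product of $\ph^-$ over all $n$ variables. This yields a single color-symmetric Laurent polynomial $\rho_h$ in all variables, the same for every term of the infinite sum. That uniformity is exactly what justifies moving it onto $E$, with $E\rho_h \in \CS^+$. The two placements agree via \eqref{eqn:p shuffle}: the power sum in $[p_{i,u}, E]$ cancels $\rho_{p_{i,u}}$.
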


\medskip

 The claim above and the fact that $F'' \in J^{\sh}(\bpsi)$ imply that the second-and-third of the four tensor factors in the RHS of \eqref{eqn:*} have pairing 0. Thus, the whole pairing in \eqref{eqn:*} is 0, hence $F' * F'' \in J^{\sh}(\bpsi)$, as required.

\medskip

 To prove (2), recall from \eqref{eqn:k shuffle} and \eqref{eqn:p shuffle} that commuting $F$ with $\kappa^\pm_\ba$ and $p_{i,u}$ amounts to multiplying $F$ by either a scalar or a color-symmetric Laurent polynomial $\rho(z_{ia})$. Since the pairing in \eqref{eqn:psi pairing shifted} is a certain contour integral applied to the product of $E$, $F$ and $\psi$, see the general formula \eqref{eqn:pairing shuffle formula}, multiplying $F$ by $\rho$ has the same effect on the pairing as multiplying $E$ by $\rho$. Since $\CS^+$ is preserved under multiplication by color-symmetric Laurent polynomials, (2) follows. 

\medskip

 For statement (3), we invoke \eqref{eqn:shifted drinfeld double relation} for any $E \in \CS_{\bn}$ and $F \in \CS_{-\bn}$:
\begin{equation}
\label{eqn:base}
E * F = \Big \langle \sigma_{-\br}(E_1), F_1 \Big \rangle F_2 * E_2 \Big \langle E_3, S(F_3) \Big \rangle 
\end{equation}
where $\Delta^{(2})(E) = E_1 \otimes E_2 \otimes E_3 \in \CS_{\bn'} \otimes \CS_{\bn''} \otimes \CS_{\bn'''}$. When we apply \eqref{eqn:base} to $\vac \in W^\sh(\bpsi)$, only the $\bn'' = \b0$ terms survive, and indeed we obtain
\begin{equation}
\label{eqn:r on vac}
E \left( F \vac \right) = F_2 \vac \cdot \Big \langle \sigma_{-\br}(E_1), F_1 \Big \rangle \Big \langle E_2 \prod \psi , S(F_3) \Big \rangle 
\end{equation}
The product of $\psi$'s in the second pairing is produced by the action of Cartan elements in the middle tensor factor of $\Delta^{(2)}(E)$ on the vacuum. We must prove that $F \in J^{\sh}(\bpsi)$ implies that the RHS of \eqref{eqn:r on vac} lies in $J^{\sh}(\bpsi) \vac$. Let us unpack the right-hand side of \eqref{eqn:r on vac}. By \eqref{eqn:coproduct shuffle minus}, we have
\begin{equation}
\label{eqn:reason}
\Delta^{(2)}(F) = F'(z_{ia}) \otimes F''(z_{jb}) \prod_{i,a} \ph_i^-(z_{ia}) \otimes F'''(z_{kc}) \prod_{i,a} \ph_i^-(z_{ia}) \prod_{j,b} \ph_j^-(z_{jb}),
\end{equation}
for various $F',F'',F''' \in \CS^-$ whose variables we will denote by $z_{ia}, z_{jb}, z_{kc}$, respectively. With this in mind, formula \eqref{eqn:r on vac} reads
\begin{equation}
\label{eqn:r on vac 2}
E \left( F \vac \right) = 
\end{equation}
\begin{align*} 
&= F'' \prod_{i,a} \ph_i^-(z_{ia}) \vac \cdot \Big \langle \sigma_{-\br}(E_1), F' \Big \rangle \Big \langle E_2 \prod \psi , S(F''' \prod_{i,a} \ph_i^-(z_{ia}) \prod_{j,b} \ph_j^-(z_{jb})) \Big \rangle = \\
&= F'' \vac \cdot \Big \langle E_1 \prod \psi, F' \Big \rangle \Big \langle E_2 \prod \psi , S(F''' \prod_{i,a} \ph_i^-(z_{ia}) \prod_{j,b} \ph_j^-(z_{jb})) \Big \rangle  
\end{align*}
with the equality between the latter two rows following from the middle equation of \eqref{eqn:two phis}. To show that the expression on the bottom row lies in $J^{\sh}(\bpsi) \vac$, we must prove that for all $E' \in \CS^+$ we have \footnote{In the middle pairing in the right-hand side of equation \eqref{eqn:0 want}, we are using the fact that
\begin{multline*}
F_2 * S(F_3) = F''_1 \prod_{i,a} \ph_i^-(z_{ia}) * S\Big(F_2'' \prod_{i,a} \ph_i^-(z_{ia}) \Big) = \\ = F''_1\prod_{i,a} \ph_i^-(z_{ia}) * \prod_{i,a} \ph_i^-(z_{ia})^{-1} S(F_2'') = F''_1 * S(F_2'') 
\end{multline*}
where $F_2 \otimes F_3$ denotes the coproduct $\Delta \left( F'' \prod_{i,a} \ph_i^-(z_{ia}) \right)$ of the middle tensor factor in \eqref{eqn:reason}.}
\begin{equation}
\label{eqn:0 want}
0 =\Big \langle E_1 \prod \psi, F_1 \Big \rangle \Big \langle E' \prod \psi, F_2 * S(F_3) \Big \rangle \Big \langle E_2 \prod \psi , S(F_4) \Big \rangle
\end{equation}
By \eqref{eqn:bialgebra 1}, equality \eqref{eqn:0 want} becomes
\begin{equation*}
\begin{split}
0 =&\Big \langle E_1 \prod \psi, F_1 \Big \rangle \Big \langle E'_1 \prod \psi, F_2 \Big \rangle \Big \langle E_2', S(F_3) \Big \rangle \Big \langle E_2 \prod \psi , S(F_4) \Big \rangle \\
\stackrel{\eqref{eqn:bialgebra 2}}=& \Big \langle E_1' * E_1 \prod \psi, F_1 \Big \rangle \Big \langle E_2' * E_2 \prod \psi, S(F_2) \Big \rangle \stackrel{\eqref{eqn:bialgebra 1}}= \Big \langle E' * E \prod \psi, F_1 * S(F_2) \Big \rangle.
\end{split}
\end{equation*}
The latter is a true equality due to the assumption that $F \in J^{\sh}(\bpsi)$.

\medskip

 Having showed that $J^{\sh}(\bpsi) \vac$ is a graded $\UUsh$ submodule of $W^{\sh}(\bpsi)$, it remains to show that it is the unique such maximal graded submodule. To this end, choose any $F \in \CS_{-\bn} \backslash J^{\sh}(\bpsi)_{\bn}$, which means that there exists $E \in \CS_{\bn}$ such that
$$
\left \langle E(z_{i1},\dots,z_{in_i})_{i \in I} \prod_{i \in I} \prod_{a=1}^{n_i} \psi_i(z_{ia}) , F_1 * S(F_2) \right \rangle =: \alpha \neq 0
$$
Because of \eqref{eqn:reason}, formula \eqref{eqn:r on vac} implies precisely $EF \vac = \alpha \vac$. Therefore, any graded submodule of $W^{\sh}(\bpsi)$ which strictly contains $J^{\sh}(\bpsi) \vac$ must contain the highest weight vector $\vac$, and thus must be the whole of $W^{\sh}(\bpsi)$. 

\end{proof}

\medskip

\begin{corollary}
\label{cor:simple module}

(\textbf{Theorem \ref{thm:technical intro}}) For any $\ell$-weight $\bpsi$ of order $\br$, the quotient
\begin{equation}
\label{eqn:simple module cor}
L^{\esh}(\bpsi) = W^{\esh}(\bpsi) \Big/ J^{\esh}(\bpsi) \vac 
\end{equation}
is the unique (up to isomorphism) simple graded $\UUsh$ module generated by a single vector $\vac$ that satisfies the properties 
\begin{equation}
\label{eqn:simple module shifted property}
\begin{split}
&\ph_i^+(z) \cdot \vac = \psi_i(z) \vac \qquad \text{ expanded near } z \sim \infty \\
&\ph_i^-(z) \cdot \vac = z^{r_i} \psi_i(z) \vac \quad \text{expanded near } z \sim 0 \\
&\kappa^\pm_{\ba} \cdot \vac = q^{(\bom,\ba)} \vac
\end{split}
\end{equation}
for all $i \in I$, $\ba \in \fh$ and 
\begin{equation}
\label{eqn:simple module shifted property 2}
e_{i,d}\cdot \vac = 0
\end{equation}
for all $i \in I$, $d \in \BZ$.

\end{corollary}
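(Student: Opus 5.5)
The corollary is a formal consequence of Proposition \ref{prop:ideal} together with the universal property of $W^{\sh}(\bpsi)$. The plan has three parts: simplicity of the quotient, existence with the required properties, and uniqueness up to isomorphism. The hardest of these is uniqueness, because of the word ``graded'' and because we need every candidate module to be a quotient of $W^{\sh}(\bpsi)$.

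For \emph{simplicity}: by Proposition \ref{prop:ideal}, $J^{\sh}(\bpsi)\vac$ is the unique maximal graded submodule of $W^{\sh}(\bpsi)$. Any graded submodule of the quotient \eqref{eqn:simple module cor} pulls back to a graded submodule of $W^{\sh}(\bpsi)$ containing $J^{\sh}(\bpsi)\vac$. That pullback is therefore either $J^{\sh}(\bpsi)\vac$ itself or all of $W^{\sh}(\bpsi)$, so $L^{\sh}(\bpsi)$ is simple as a graded module. The quotient is nonzero because $1 \notin J^{\sh}(\bpsi)$: for $\bn=\b0$ the pairing in \eqref{eqn:psi pairing shifted} with $E=1$ equals $1$.

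For \emph{existence}: the image of $\vac$ in the quotient still satisfies \eqref{eqn:simple module shifted property}--\eqref{eqn:simple module shifted property 2}, since these are exactly the defining relations \eqref{eqn:two phis}--\eqref{eqn:e ann} of $W^{\sh}(\bpsi)$. It also generates the quotient, because $\vac$ generates $W^{\sh}(\bpsi)$.

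For \emph{uniqueness}, let $V$ be any simple graded module generated by a vector $v$ satisfying these relations.
\begin{enumerate}
\item By the definition of $W^{\sh}(\bpsi)$ as the module generated by $\vac$ modulo exactly these relations, there is a surjective $\UUsh$-homomorphism $W^{\sh}(\bpsi)\to V$ sending $\vac\mapsto v$. Concretely, using the triangular decomposition \eqref{eqn:triangular shifted} and the identification \eqref{eqn:iso verma}, it is $F\vac\mapsto Fv$.
\item This map is compatible with the $-\nn$ gradings, where the grading on $V$ is the weight grading relative to $v$ coming from the $\kappa$'s. Hence its kernel is a proper graded submodule.
\item By Proposition \ref{prop:ideal}, the kernel is therefore contained in $J^{\sh}(\bpsi)\vac$. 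So $V$ surjects onto $L^{\sh}(\bpsi)$.
\item Simplicity of $V$ forces this surjection to be an isomorphism.
\end{enumerate}

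The only subtle point is step (2): we must check that any such $V$ is indeed graded compatibly. This holds because the $\kappa^\pm_\ba$ act on $v$ by the fixed character $q^{(\bom,\ba)}$ and shift weight by $-\bn$ on $\CS_{-\bn}$, by \eqref{eqn:k shuffle}, so the weight decomposition of $V$ provides the grading. The non-degeneracy of the pairing on $\fh$ (Remark \ref{rem:enlarge cartan}) guarantees that distinct $\bn$ give distinct $\kappa$-eigencharacters.
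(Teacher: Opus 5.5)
Your proposal is correct and follows the route the paper intends. The paper states this corollary without a separate proof, as an immediate consequence of Proposition \ref{prop:ideal}: $W^{\sh}(\bpsi)$ is universal for these relations, and $J^{\sh}(\bpsi)\vac$ is its unique maximal graded submodule (proper, since $J^{\sh}(\bpsi)_{\b0}=0$). Your handling of the grading through the $\kappa^\pm_{\ba}$-eigencharacters and the non-degeneracy of the form on $\fh$ is a sound way to supply the detail the paper leaves implicit. In fact it shows that every submodule of $W^{\sh}(\bpsi)$ is automatically graded.
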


\medskip

\subsection{Residues revisited}
\label{eqn:residues revisited}

The following technical claim is an analogue of \cite[Lemma 3.10]{N Cat}, and it will be proved at the end of the present Subsection.

\medskip

\begin{lemma}
\label{lem:antipode}

For any $i_1,\dots,i_n \in I$ and $d_1, \dots,d_n \in \BZ$, we have
\begin{equation}
\label{eqn:antipode}
\left \langle e_{i_1,d_1} * \dots * e_{i_n,d_n} \prod_{i \in I} \prod_{a=1}^{n_i} \psi_i (z_{ia}) , F_1 * S (F_2) \right \rangle = \sum_{m=0}^n (-1)^{n-m} 
\end{equation}
\begin{multline*}
\sum_{\{1,\dots,n\} = \{a_1 < \dots < a_m\} \sqcup \{b_1 < \dots < b_{n-m}\}}  \int_{|z_{a_m}| \ll \dots \ll |z_{a_1}| \ll 1 \ll |z_{b_1}| \ll \dots \ll |z_{b_{n-m}}|}  \\
\frac {z_1^{d_1} \dots z_n^{d_n} F(z_1,\dots,z_n)}{\prod_{1\leq a < b \leq n} \zeta_{i_bi_a} \left(\frac {z_b}{z_a} \right)} \prod_{a=1}^n \psi_{i_a}(z_a)
\end{multline*}
for all $F \in \CS_{-\bs^{i_1} - \dots - \bs^{i_n}}$. On the bottom row, the notation $F(z_1,\dots,z_n)$ refers to plugging each symbol $z_a$ into a variable of the form $z_{i_a\bullet_a}$ of $F$, see \eqref{eqn:pairing shuffle formula}.

\end{lemma}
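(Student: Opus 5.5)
We plan to expand the left-hand side of \eqref{eqn:antipode} using the coproduct formula \eqref{eqn:coproduct shuffle minus}, a formula for the antipode on $\CS^-$, and the pairing formula \eqref{eqn:pairing shuffle formula}, keeping track of the prescribed expansion \eqref{eqn:expansion} of the $\psi$'s. Write $\Delta(F) = \sum_{\bm} F_1 \otimes F_2$, where $F_1$ involves the variables of a subset of colors-with-multiplicities $\bm$ and $F_2$ the complementary variables $\bn-\bm$. By \eqref{eqn:bialgebra 1}, the pairing of $E = e_{i_1,d_1} * \dots * e_{i_n,d_n}$ (times $\prod\psi$) with $F_1 * S(F_2)$ equals the pairing of $\Delta(E \prod \psi)$ with $F_1 \otimes S(F_2)$. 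The coproduct of $e_{i_1,d_1} * \dots * e_{i_n,d_n}$ is the product of the $\Delta(e_{i_a}(z_a)) = \ph^+_{i_a}(z_a) \otimes e_{i_a}(z_a) + e_{i_a}(z_a)\otimes 1$. Expanding this product gives a sum over splittings $\{1,\dots,n\} = A \sqcup B$, where $A$ indexes the factors that go to the right tensor factor (paired with $S(F_2)$) and $B$ those that stay on the left (paired with $F_1$), or vice versa according to the convention. This already produces the sum over $\{a_1<\dots<a_m\}\sqcup\{b_1<\dots<b_{n-m}\}$ in \eqref{eqn:antipode}.

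For each splitting, we first compute the pairing of $e_{i_{a_1}}(z_{a_1}) \cdots$ with $S(F_2)$. For this we will establish the auxiliary formula $\langle e_{j_1,c_1} * \dots * e_{j_k,c_k}, S(G)\rangle = (-1)^k$ times the same contour integral as \eqref{eqn:pairing shuffle formula}, but with contours ordered oppositely, $|z_1| \ll \dots \ll |z_k|$. It also carries Cartan factors $\prod \ph^-$, which after pairing with the $\ph^+$'s produce $\zeta$-factors. The proof uses \eqref{eqn:antipode pairing}, namely $\langle a, S(b)\rangle = \langle S^{-1}(a), b\rangle$, together with $S^{-1}(e_i(z)) = -e_i(z)\ph_i^+(z)^{-1}$ (from \eqref{eqn:antipode e}) and the anti-multiplicativity of $S^{-1}$. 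This reverses the order of the product, and commuting the $\ph^+$'s back via \eqref{eqn:rel 1 loop} exactly cancels the $\zeta$'s so as to reproduce the opposite contour ordering. This is the analogue of \cite[Lemma 3.10]{N Cat} and accounts for the sign $(-1)^{n-m}$.

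Next we combine the pieces. The cross terms come from three sources:
\begin{itemize}
\item the denominators $\zeta_{ij}$ in \eqref{eqn:coproduct shuffle minus}, expanded in $|z_{\text{left}}| \ll |z_{\text{right}}|$;
\item the $\ph^-$ factors attached to $F_1$'s variables;
\item the $\ph^+$ factors arising from $\Delta(e)$, paired against $\ph^-$ via \eqref{eqn:pairing ph}.
\end{itemize}
Together they should reassemble into the single factor $F(z_1,\dots,z_n)/\prod_{a<b}\zeta_{i_bi_a}(z_b/z_a)$ evaluated in the region where the two groups of variables are separated by $|z_{a}| \ll 1 \ll |z_{b}|$. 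Here the $\psi$'s are expanded near $0$ for the small variables and near $\infty$ for the large ones, which matches \eqref{eqn:expansion}. Summing over splittings and over the components of $\Delta(F)$ then collapses, by the usual pairing/coproduct bookkeeping (as in the proof of \eqref{eqn:pairing shuffle formula} itself), to the stated integrals.

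The main obstacle is this last bookkeeping step. One must check that the $\zeta$-factors from the coproduct of $F$, from the pairing of Cartan currents, and from the antipode cancel or combine into exactly $\prod_{a<b}\zeta_{i_bi_a}(z_b/z_a)^{-1}$ with the correct expansion directions. In particular, one must confirm that the resulting contour ordering within the small group is $|z_{a_m}|\ll\dots\ll|z_{a_1}|$ (reversed) while the large group stays $|z_{b_1}|\ll\dots\ll|z_{b_{n-m}}|$. We would first carry this out for $n=2$, with one variable in each group, to pin down the conventions, and then argue by induction on $n$ using multiplicativity of $\Delta$.
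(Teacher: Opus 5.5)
Your overall route is the paper's: apply \eqref{eqn:bialgebra 1}, expand $\prod_a \Delta(e_{i_a}(z_a))$ over splittings, move $S$ across as $S^{-1}$ via \eqref{eqn:antipode pairing} with $S^{-1}(e_i(z)) = -e_i(z)\bar{\ph}^+_i(z)$, then commute and pair the Cartan currents via \eqref{eqn:ph in pairing} and \eqref{eqn:pairing ph}. Your auxiliary formula for $\langle e_{j_1,c_1}*\dots*e_{j_k,c_k}, S(G)\rangle$ is correct. It is what \eqref{eqn:equation 5}--\eqref{eqn:equation 8} do on the right tensor factor. Note that it is the $e_{i_{b_y}}$'s, not the $e_{i_{a_x}}$'s, that pair with $S(F_2)$; this is what produces the sign $(-1)^{n-m}$ and the large contours. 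The gap is in the step you defer as bookkeeping. Your list of sources of cross factors is wrong, and it omits the one ingredient that makes the answer depend on how $\{a_x\}$ and $\{b_y\}$ interleave.

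The currents $\ph^+_{i_{b_y}}(z_{b_y})$ produced by $\Delta(e)$ sit in the \emph{left} tensor factor, interleaved with the $e_{i_{a_x}}(z_{a_x})$'s. There they are paired with $F_1$, which contains no $\ph^-$: the $\ph^-(z_{a_x})$'s of \eqref{eqn:coproduct shuffle minus} sit in $F_2$. So these $\ph^+$'s are never paired against $\ph^-$, contrary to your third bullet. Instead, each one must be commuted to the right past the $e_{i_{a_x}}(z_{a_x})$ with $a_x > b_y$, using \eqref{eqn:rel 1 loop}. This produces $\zeta_{i_{b_y}i_{a_x}}(z_{b_y}/z_{a_x})/\zeta_{i_{a_x}i_{b_y}}(z_{a_x}/z_{b_y})$, expanded in $|z_{a_x}| \ll |z_{b_y}|$, for exactly those pairs. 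Only after that does \eqref{eqn:ph in pairing} with trivial $\ph^-$ reduce them to the counit.

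The $\ph^-(z_{a_x})$'s in $F_2$ are paired instead with the $\bar{\ph}^+(z_{b_y})$'s coming from $S^{-1}$. This gives $\zeta_{i_{a_x}i_{b_y}}(z_{a_x}/z_{b_y})/\zeta_{i_{b_y}i_{a_x}}(z_{b_y}/z_{a_x})$ for all pairs, which turns the coproduct denominator into $\prod_{x,y}\zeta_{i_{b_y}i_{a_x}}(z_{b_y}/z_{a_x})$.

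Every source you list yields factors that depend only on the two index sets, not on their interleaving. So they cannot produce the target, whose cross factor is $\zeta_{i_{b_y}i_{a_x}}(z_{b_y}/z_{a_x})$ when $a_x<b_y$ but $\zeta_{i_{a_x}i_{b_y}}(z_{a_x}/z_{b_y})$ when $b_y<a_x$. The missing commutation factors supply exactly this correction. With this step inserted (the paper's \eqref{eqn:phi bar past e 1}--\eqref{eqn:equation 4}), the computation closes directly for all $n$, with no induction needed.
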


\medskip

 Before we prove Lemma \ref{lem:antipode}, let us use it to conclude the proof of Theorem \ref{thm:main intro}.

\medskip

\begin{proof} \emph{of Theorem \ref{thm:main intro}:} Since $\CS^+$ is spanned by shuffle products of $e_{i,d} = z_{i1}^d \in \CV_{\bs^i}$, we conclude that $F \in \CS_{-\bn}$ lies in $J^{\sh}(\bpsi)$ if and only if the left-hand side of \eqref{eqn:antipode} is 0 for all $i_1,\dots,i_n \in I$ and $d_1,\dots,d_n \in \BZ$. However, this condition is equivalent to the right-hand side of \eqref{eqn:antipode} being 0, which is tautologically the same as condition \eqref{eqn:condition borel}. This establishes the equality of vector spaces
\begin{equation}
\label{eqn:easy}
\bar{J}^{\neq 0}(\bpsi) = J^{\sh}(\bpsi)
\end{equation}
which together with \eqref{eqn:iso} implies \eqref{eqn:main intro}. The same analysis as in \eqref{eqn:big intersection} implies that
\begin{equation}
\label{eqn:big intersection shifted}
J^{\sh}(\bpsi)_{\bn} = \bigcap_{\bx \in (\BC^*)^{\bn}} J^{\sh}(\bpsi)_{\bx}
\end{equation}
where 
$$
J^{\sh}(\bpsi)_{\bx} = \left\{F \in \CS_{-\bn} \Big| \underset{z_n=x_n}{\text{Res}} \dots \underset{z_1=x_1}{\text{Res}} \frac {F (z_1,\dots,z_n)(\text{any polynomial})}{\prod_{1\leq a < b \leq n} \zeta_{i_bi_a} \left(\frac {z_b}{z_a} \right)} \prod_{a=1}^n \psi_{i_a}(z_a) = 0 \right\}
$$
Comparing the formula above with \eqref{eqn:j x}, we see that 
\begin{equation}
\label{eqn:j sh bar j}
J^{\sh}(\bpsi)_{\bx} = \bar{J}^{\neq 0}(\bpsi)_{\bx}
\end{equation}
for all $\bx \in (\BC^*)^{\bn}$. We conclude the following analogue of \eqref{eqn:decomposition x}
\begin{equation}
\label{eqn:decomposition x shifted}
L^{\sh}(\bpsi)_{\bn} = \bigoplus_{\bx \in (\BC^*)^{\bn}} L^{\sh}(\bpsi)_{\bx}
\end{equation}
where
\begin{equation}
\label{eqn:l x shifted}
L^{\sh}(\bpsi)_{\bx} = \CS_{-\bn} \Big/ J^{\sh}(\bpsi)_{\bx}
\end{equation}
As a consequence of \eqref{eqn:easy}, all the vector spaces with superscripts ``$\sh$" are isomorphic to the corresponding vector spaces with superscript ``$\neq 0$" from Subsection \ref{sub:residues}. Therefore, we conclude that the $q$-characters satisfy
\begin{equation}
\label{eqn:decomposition x q character shifted}
\chi_q(L^{\sh}(\bpsi)) = [\bpsi] \sum_{\bn \in \nn} \sum_{\bx \in (\BC^*)^{\bn}} \dim_{\BC} \left(L^{\sh}(\bpsi)_{\bx}\right) \prod_{i \in I} \prod_{a=1}^{n_i} A_{i,x_{ia}}^{-1} = \chi_q(L^{\neq 0}(\bpsi))
\end{equation}
This establishes \eqref{eqn:main intro q-characters}, precisely as predicted by \cite{H Shifted}.

\end{proof}

\medskip 

\begin{proof} \emph{of Lemma \ref{lem:antipode}:} Let us recall from \eqref{eqn:coproduct shuffle minus} the formula for $\Delta(F) = F_1 \otimes F_2$, and let us abbreviate it as
$$
\Delta(F) = F' \otimes F'' \ph
$$
where $F',F'' \in \CS^-$ and $\ph$ is a polynomial in the $\ph_{j,k}^-$'s. We may use \eqref{eqn:bialgebra 1} to write 
\begin{equation}
\label{eqn:equation 1}
\text{LHS of \eqref{eqn:antipode}} = \left \langle \Delta(e_{i_1,d_1}) \dots \Delta(e_{i_n,d_n}) \prod_{i \in I} \prod_{a=1}^{n_i} \psi_i (z_{ia}) , F' \otimes S (F'' \ph) \right \rangle 
\end{equation}
Formula \eqref{eqn:coproduct e} reads $\Delta(e_{i,d}) = e_{i,d} \otimes 1 + \sum_{k=0}^\infty \ph_{i,k}^+ \otimes e_{i,d-k}$, so \eqref{eqn:equation 1} becomes
\begin{equation}
\label{eqn:equation 2}
\text{LHS of \eqref{eqn:antipode}} = \sum_{\{1,\dots,n\} = \{a_1 < \dots < a_m\} \sqcup \{b_1 < \dots < b_{n-m}\}} \sum_{k_1,\dots,k_{n-m} = 0}^\infty
\end{equation}
$$
\Big \langle (e_{i_1,d_1} \text{ or } \ph_{i_1,k_1}^+) \dots (e_{i_n,d_n} \text{ or } \ph_{i_n,k_n}^+) \prod \psi , F' \Big \rangle \Big \langle \prod_{y=1}^{n-m} e_{i_{b_y},d_{b_y}-k_{b_y}} \prod \psi, S(F'' \ph)  \Big \rangle 
$$
where in each parenthesis $(e_{i_x,d_x} \text{ or } \ph_{i_x,k_x}^+)$ we declare that we choose $e_{i_x,d_x}$ if $x \in \{a_1,\dots,a_m\}$ and $\ph_{i_x,k_x}^+$ if $x \in \{b_1,\dots,b_{n-m}\}$. Using formula \eqref{eqn:rel 1 loop}, we have
\begin{equation}
\label{eqn:phi bar past e 1}
\ph_j^+(y) e_i(x) = e_i(x) \ph_j^+(y) \frac {\zeta_{ji} \left( \frac yx \right)}{\zeta_{ij} \left(\frac xy \right)}  \Rightarrow  \ph_{j,k}^+ e_{i,d} = \sum_{\ell=0}^k \gamma_{ij}^{(\ell)} e_{i,d+\ell} \ph_{j,k-\ell}^+
\end{equation}
where the complex numbers $\gamma_{ij}^{(\ell)}$ are defined by
\begin{equation}
\label{eqn:expansion zeta 1}
\frac {\zeta_{ji} \left( \frac yx \right)}{\zeta_{ij} \left(\frac xy \right)} = \sum_{\ell=0}^{\infty} \gamma_{ij}^{(\ell)} \frac {x^\ell}{y^\ell}
\end{equation}
We can use \eqref{eqn:phi bar past e 1} to move all the $\varphi$'s to the right of all the $e$'s in the second line of relation \eqref{eqn:equation 2}, and we thus obtain
\begin{equation}
\label{eqn:equation 3}
\text{LHS of \eqref{eqn:antipode}} = \sum_{\{1,\dots,n\} = \{a_1 < \dots < a_m\} \sqcup \{b_1 < \dots < b_{n-m}\}} \sum_{k_1,\dots,k_{n-m} = 0}^\infty \sum_{\{\ell_{x,y} \geq 0\}_{\forall a_x> b_y}}
\end{equation}
$$
 \prod_{1 \leq x \leq m, 1 \leq y \leq n-m}^{a_x > b_y} \gamma_{i_{a_x}i_{b_y}}^{(\ell_{x,y})} \left \langle \prod_{x=1}^m e_{i_{a_x}, d_{a_x}+\sum_{1 \leq y \leq n-m}^{a_x>b_y} \ell_{x,y}} \prod_{y=1}^{n-m} \ph_{i_{b_y},k_{b_y}-\sum_{1\leq x \leq m}^{a_x>b_y} \ell_{x,y}}^+\prod \psi , F' \right \rangle 
$$
$$
\left \langle \prod_{y=1}^{n-m} e_{i_{b_y},d_{b_y}-k_{b_y}} \prod \psi, S(F'' \ph)  \right \rangle 
$$
For any $E \in \CS^+$, $F \in \CS^-$ and for any $\ph^+,\ph^-$ polynomials in $\ph^+_{j,k}, \ph^-_{j,k}$ (respectively), we have the following identity
$$
\left \langle E \ph^+, F \ph^- \right \rangle \stackrel{\eqref{eqn:bialgebra 1}}=  \left \langle \Delta(E) \Delta(\ph^+), F \otimes \ph^- \right \rangle \stackrel{\eqref{eqn:coproduct shuffle plus}}= \left \langle (E \otimes 1) (\ph^+_1 \otimes \ph^+_2), F \otimes \ph^- \right \rangle 
$$
$$
= \left \langle E \ph^+_1, F \right \rangle \left \langle \ph^+_2, \ph^- \right \rangle \stackrel{\eqref{eqn:bialgebra 2}}= \left \langle E \otimes \ph^+_1, \Delta^{\text{op}}(F) \right \rangle \left \langle \ph^+_2, \ph^- \right \rangle \stackrel{\eqref{eqn:coproduct shuffle minus}}= \left \langle E \otimes \ph^+_1, F \otimes 1 \right \rangle \left \langle \ph^+_2, \ph^- \right \rangle 
$$
\begin{equation}
\label{eqn:ph in pairing}
= \left \langle E, F \right \rangle \varepsilon(\ph^+_1) \left \langle \ph^+_2, \ph^- \right \rangle =  \left \langle E, F \right \rangle \left \langle \ph^+, \ph^- \right \rangle
\end{equation}
where we write $\Delta(\ph^+) = \ph^+_1 \otimes \ph^+_2$ in Sweedler notation, and let $\varepsilon$ denote the counit. In particular, if $\ph^- = 1$, then the pairing above is zero unless $\ph^+$ is a polynomial in the $\ph^+_{j,0}$'s. Therefore, the second line of \eqref{eqn:equation 3} is 1 if $k_{b_y} = \sum_{1\leq x \leq m}^{a_x>b_y} \ell_{x,y}$ for all $y \in \{1,\dots,n-m\}$ and 0 otherwise. We conclude that \eqref{eqn:equation 3} can be rewritten as 
\begin{equation}
\label{eqn:equation 4}
\text{LHS of \eqref{eqn:antipode}} = \sum_{\{1,\dots,n\} = \{a_1 < \dots < a_m\} \sqcup \{b_1 < \dots < b_{n-m}\}} 
\end{equation}
$$
\sum_{\{\ell_{x,y} \geq 0\}_{\forall a_x> b_y}} \prod_{1 \leq x \leq m, 1 \leq y \leq n-m}^{a_x > b_y} \gamma_{i_{a_x}i_{b_y}}^{(\ell_{x,y})} \left \langle \prod_{x=1}^m e_{i_{a_x}, d_{a_x}+\sum_{1 \leq y \leq n-m}^{a_x>b_y} \ell_{x,y}} \prod \psi , F' \right \rangle 
$$
$$
 \left \langle \prod_{y=1}^{n-m} e_{i_{b_y},d_{b_y}-\sum_{1\leq x \leq m}^{a_x>b_y} \ell_{x,y}} \prod \psi, S(F'' \ph)  \right \rangle 
$$
Using property \eqref{eqn:antipode pairing} and the fact that the antipode is an anti-homomorphism, we may rewrite the expression above as
\begin{equation}
\label{eqn:equation 5}
\text{LHS of \eqref{eqn:antipode}} = \sum_{\{1,\dots,n\} = \{a_1 < \dots < a_m\} \sqcup \{b_1 < \dots < b_{n-m}\}} 
\end{equation}
$$
\sum_{\{\ell_{x,y} \geq 0\}_{\forall a_x> b_y}} \prod_{1 \leq x \leq m, 1 \leq y \leq n-m}^{a_x > b_y} \gamma_{i_{a_x}i_{b_y}}^{(\ell_{x,y})} \left \langle \prod_{x=1}^m e_{i_{a_x}, d_{a_x}+\sum_{1 \leq y \leq n-m}^{a_x>b_y} \ell_{x,y}} \prod \psi , F' \right \rangle 
$$
$$
\left \langle \prod_{y=n-m}^{1} S^{-1} \left( e_{i_{b_y},d_{b_y}-\sum_{1\leq x \leq m}^{a_x>b_y} \ell_{x,y}} \right) \prod \psi, F'' \ph  \right \rangle 
$$
where $\prod_{y=n-m}^1$ indicates that the leftmost factor in the product corresponds to $y=n-m$ and the rightmost factor corresponds to $y=1$. If we apply $S^{-1}$ to \eqref{eqn:antipode e}, we observe that for all $i \in I$ and $d \in \BZ$
$$
S^{-1}(e_i(z)) = - e_i(z)  \bar{\ph}^+_i(z) \quad \Rightarrow \quad S^{-1}(e_{i,d}) = - \sum_{k=0}^{\infty} e_{i,d-k} \bar{\ph}_{i,k}^+
$$
where we write $\left(\ph^+_i(z)\right)^{-1} =: \bar{\ph}^+_i(z) = \sum_{k=0}^{\infty} \frac {\bar{\ph}^+_{i,k}}{z^k}$. Therefore, \eqref{eqn:equation 5} becomes
\begin{equation}
\label{eqn:equation 6}
\text{LHS of \eqref{eqn:antipode}} = \sum_{\{1,\dots,n\} = \{a_1 < \dots < a_m\} \sqcup \{b_1 < \dots < b_{n-m}\}} 
\end{equation}
$$
\sum_{\{\ell_{x,y} \geq 0\}_{\forall a_x> b_y}} \prod_{1 \leq x \leq m, 1 \leq y \leq n-m}^{a_x > b_y} \gamma_{i_{a_x}i_{b_y}}^{(\ell_{x,y})} \left \langle \prod_{x=1}^m e_{i_{a_x}, d_{a_x}+\sum_{1 \leq y \leq n-m}^{a_x>b_y} \ell_{x,y}} \prod \psi , F' \right \rangle 
$$
$$
(-1)^{n-m} \sum_{k_1,\dots,k_{n-m} = 0}^{\infty} \left \langle \prod_{y=n-m}^{1} \left( e_{i_{b_y},d_{b_y}-k_y-\sum_{1\leq x \leq m}^{a_x>b_y} \ell_{x,y}} \bar{\ph}_{i,k_y}^+ \right) \prod \psi, F'' \ph \right \rangle 
$$
Using formula \eqref{eqn:rel 1 loop}, we have
\begin{equation}
\label{eqn:phi bar past e 2}
\bar{\ph}_j^+(y) e_i(x) = e_i(x) \bar{\ph}_j^+(y) \frac {\zeta_{ij} \left(\frac xy \right)}{\zeta_{ji} \left( \frac yx \right)}  \Rightarrow  \bar{\ph}_{j,k}^+ e_{i,d} = \sum_{\ell=0}^k \bar{\gamma}_{ij}^{(\ell)} e_{i,d+\ell} \bar{\ph}_{j,k-\ell}^+
\end{equation}
where the complex numbers $\bar{\gamma}_{ij}^{(\ell)}$ are defined by
\begin{equation}
\label{eqn:expansion zeta 2}
\frac {\zeta_{ij} \left(\frac xy \right)}{\zeta_{ji} \left( \frac yx \right)} = \sum_{\ell=0}^{\infty} \bar{\gamma}_{ij}^{(\ell)} \frac {x^\ell}{y^\ell}
\end{equation}
We can use \eqref{eqn:phi bar past e 2} to move $\bar{\ph}$'s to the right in the third row of \eqref{eqn:equation 6}, so we have
\begin{equation}
\label{eqn:equation 7}
\text{LHS of \eqref{eqn:antipode}} = (-1)^{n-m} \sum_{\{1,\dots,n\} = \{a_1 < \dots < a_m\} \sqcup \{b_1 < \dots < b_{n-m}\}} 
\end{equation}
$$
\sum_{\{\ell_{x,y} \geq 0\}_{\forall a_x> b_y}} \prod_{1 \leq x \leq m, 1 \leq y \leq n-m}^{a_x > b_y} \gamma_{i_{a_x}i_{b_y}}^{(\ell_{x,y})} \left \langle \prod_{x=1}^m e_{i_{a_x}, d_{a_x}+\sum_{1 \leq y \leq n-m}^{a_x>b_y} \ell_{x,y}} \prod \psi , F' \right \rangle 
$$
$$
 \sum_{k_1,\dots,k_{n-m} = 0}^{\infty} \sum_{\{\bar{\ell}_{y,y'} \geq 0\}_{1 \leq y < y' \leq n-m}} \prod_{1 \leq y < y' \leq n-m} \bar{\gamma}_{i_{b_y}i_{b_{y'}}}^{(\bar{\ell}_{y,y'})} 
$$
$$
\left \langle \prod_{y=n-m}^{1}  \bar{\ph}_{i_{b_y},k_y - \sum_{y' = 1}^{y-1} \bar{\ell}_{y',y}}^+  ,  \ph  \right \rangle  \left \langle \prod_{y=n-m}^{1} e_{i_{b_{y}},d_{b_{y}}-k_{y}-\sum_{1\leq x \leq m}^{a_x>b_{y}} \ell_{x,y}+\sum_{y'=y+1}^{n-m} \bar{\ell}_{y,y'}} \prod \psi, F'' \right \rangle  
$$
where in the last row we used \eqref{eqn:ph in pairing}. Recall from \eqref{eqn:coproduct shuffle minus} that
\begin{equation}
\label{eqn:recall coproduct}
F' \otimes F'' \ph = \frac {F(z_{i1},\dots , z_{im_i} \otimes z_{i,m_i+1}, \dots, z_{in_i})_{i \in I} \prod^{j \in I}_{1 \leq b \leq m_j} \ph^-_j(z_{jb})}{\prod^{i \in I}_{1\leq a \leq m_i} \prod^{j \in I}_{m_j < b \leq n_j} \zeta_{ij} \left( \frac {z_{ia}}{z_{jb}} \right)}
\end{equation}
with the variables expanded as $|z_{i1}|,\dots,|z_{im_i}| \ll |z_{i,m_i+1}|,\dots,|z_{in_i}|$ (we suppress the implied summation signs). Therefore, we use formulas \eqref{eqn:pairing ph} and \eqref{eqn:pairing shuffle formula} in order to evaluate the expressions on the second and third lines above, and we obtain
\begin{equation}
\label{eqn:equation 8}
\text{LHS of \eqref{eqn:antipode}} = (-1)^{n-m} \sum_{\{1,\dots,n\} = \{a_1 < \dots < a_m\} \sqcup \{b_1 < \dots < b_{n-m}\}} \sum_{\{\ell_{x,y} \geq 0\}_{\forall a_x> b_y}}
\end{equation}
$$
\mathop{\prod_{1 \leq x \leq m}^{a_x > b_y}}_{1 \leq y \leq n-m} \gamma_{i_{a_x}i_{b_y}}^{(\ell_{x,y})} \int_{1 \gg |z_{a_1}| \gg \dots \gg |z_{a_{m}}|} \frac {\prod_{x=1}^m z_{a_x}^{d_{a_x} + \sum_{1\leq y \leq n-m}^{a_x>b_y} \ell_{x,y}} F'(z_{a_1},\dots,z_{a_m})}{\prod_{1 \leq x < x' \leq m} \zeta_{i_{a_{x'}}i_{a_x}} \left(\frac {z_{a_{x'}}}{z_{a_x}} \right)} \prod_{x=1}^{m} \psi_{i_{a_x}}(z_{a_x})
$$
$$
\sum_{y_1,\dots,y_{n-m} = 0}^{\infty} \sum_{\{\bar{\ell}_{y,y'} \geq 0\}_{1 \leq y < y' \leq n-m}} \prod_{1 \leq y < y' \leq n-m} \bar{\gamma}_{i_{b_y}i_{b_{y'}}}^{(\bar{\ell}_{y,y'})} \prod_{1 \leq x \leq m, 1 \leq y \leq n-m} \frac {\zeta_{i_{a_x}i_{b_y}} \left(\frac {z_{a_x}}{z_{b_y}}\right)}{\zeta_{i_{b_y}i_{a_x}} \left(\frac {z_{b_y}}{z_{a_x}}\right)}
$$
$$
\int_{1 \ll |z_{b_1}| \ll \dots \ll |z_{b_{n-m}}|}  \frac {\prod_{y=1}^{n-m} z_{b_y}^{d_{b_y} - \sum_{y' = 1}^{y-1} \bar{\ell}_{y',y}-\sum_{1\leq x \leq m}^{a_x>b_{y}} \ell_{x,y}+\sum_{y'=y+1}^{n-m} \bar{\ell}_{y,y'}} F''(z_{b_1},\dots,z_{b_{n-m}})}{\prod_{1 \leq y < y' \leq n-m} \zeta_{i_{b_y}i_{b_{y'}}} \left(\frac {z_{b_{y}}}{z_{b_{y'}}} \right)} \prod_{y=1}^{n-m} \psi_{i_{b_y}}(z_{b_y})
$$
If we recall the definition of the coefficients $\gamma$ and $\bar{\gamma}$ from \eqref{eqn:expansion zeta 1} and \eqref{eqn:expansion zeta 2}, we obtain
\begin{equation}
\label{eqn:equation 9}
\text{LHS of \eqref{eqn:antipode}} = (-1)^{n-m} \sum_{\{1,\dots,n\} = \{a_1 < \dots < a_m\} \sqcup \{b_1 < \dots < b_{n-m}\}} 
\end{equation}
$$
\int_{|z_{a_{m}}| \ll \dots \ll |z_{a_1}| \ll 1 \ll |z_{b_1}| \ll \dots \ll |z_{b_{n-m}}|}  \prod_{1 \leq x \leq m, 1 \leq y \leq n-m}^{a_x < b_y} \frac {\zeta_{i_{a_x}i_{b_y}} \left( \frac {z_{a_x}}{z_{b_y}} \right)}{\zeta_{i_{b_y}i_{a_x}} \left( \frac {z_{b_y}}{z_{a_x}} \right)}
$$
$$
\frac {\prod_{x=1}^m z_{a_x}^{d_{a_x}}  F'(z_{a_1},\dots,z_{a_m})}{\prod_{1 \leq x < x' \leq m} \zeta_{i_{a_{x'}}i_{a_x}} \left(\frac {z_{a_{x'}}}{z_{a_x}} \right)} \frac {\prod_{y=1}^{n-m} z_{b_y}^{d_{b_y}} F''(z_{b_1},\dots,z_{b_{n-m}})}{\prod_{1 \leq y < y' \leq n-m} \zeta_{i_{b_{y'}}i_{b_y}} \left(\frac {z_{b_{y'}}}{z_{b_{y}}} \right)} \prod_{x=1}^{m} \psi_{i_{a_x}}(z_{a_x}) \prod_{y=1}^{n-m} \psi_{i_{b_y}}(z_{b_y}) 
$$
Once we recall formula \eqref{eqn:recall coproduct}, the right-hand side of the formula above is precisely the same as the right-hand side of \eqref{eqn:antipode}, which concludes the proof of Lemma \ref{lem:antipode}.

\end{proof}

\bigskip

\section{Grothendieck rings and extended $QQ$-systems}

We establish a ring isomorphism between the Grothendieck rings of $\mathcal{O}$ and $\mathcal{O}^{\sh}$, 
which is also compatible with respect to renormalization. This allows to formulate the conjectures in \cite{FH3} on generalized $QQ$-system in terms of the Borel category $\mathcal{O}$. 
As an application, we also establish an explicit solution of the $QQ$-system in the Borel category $\mathcal{O}$. This generalizes the result of \cite{FH2} from finite type to an arbitrary symmetrizable Kac-Moody Lie algebra $\fg$.

\subsection{Normalization factors}

Let us recall the normalization factors $\chi^{\br}$ that appear in \eqref{eqn:chi decomposition intro}, which in the present Section will be denoted by $\chi^\mu$ (with respect to the correspondence $\br \leftrightarrow \mu$ given by
$$
\br = (r_i \in \BZ)_{i \in I} \quad \leftrightarrow \quad \mu = \sum_{i \in I} r_i \omega_i^\vee 
$$
which will be in force throughout the present Section). The factor $\chi^{\bs^i} = \chi^{\omega_i^\vee}$ is the character of a positive prefundamental module, which corresponds to a prefundamental $\ell$-weight
\begin{equation} 
\label{eqn:ell weight1}
\bpsi_{i,a} =  \left(1,\dots,1,1- \frac az, 1, \dots, 1 \right) 
\end{equation}
with the non-trivial term situated on the $i$-th position. When $\fg$ is of finite type, it was calculated using limits of Kirillov-Reshetikhin modules in \cite{HJ}. On the other hand, in \cite{MY}, this factor was conjectured to be given by an explicit product formula over positive roots (the conjecture was proved case-by-case in all types except $E_8$, in several papers). The aforementioned formula was later generalized by \cite{W1} to the following conjecture
\begin{equation}
\label{eqn:conj factor}
\chi^\mu = \prod_{\alpha \in \Delta_+} \left(\frac 1{1 - [-\alpha]} \right)^{\text{max}(0,(\mu,\alpha))}
\end{equation}
where $\Delta_+$ is the set of positive roots of the finite type Lie algebra $\fg$, and 
$$
[-\alpha] = \left[ \Big( q^{-(\alpha,\bs^i)} \Big)_{i \in I} \right]
$$
is an $I$-tuple of constant power series. Formula \eqref{eqn:conj factor} was proved in \cite{N Char}. In fact, \emph{loc. cit.} shows that for all symmetrizable Kac-Moody Lie algebras $\fg$, we have
\begin{equation}
\label{eqn:conj factor general}
\chi^\mu = \prod_{\bn \in \nn} \left(\frac 1{1 - [-\bn]} \right)^{\text{certain exponents}}
\end{equation}
where the exponents have a shuffle algebra interpretation, and are expected to depend only on the horizontal subalgebra $\CB_{\boldsymbol{0}} \subset \UU$ (see \cite[Sections 1.4 and 3.1]{N Char} for details, and for a conjecture on the exponents in Kac-Moody types).

\subsection{Coproducts} Let $\Delta'$ be the Drinfeld-Jimbo coproduct on $\UU \cong \UUaff$ for finite type $\fg$, while for an arbitrary symmetrizable Kac-Moody Lie algebra $\fg$ we let 
\begin{equation}
\label{eqn:new}
\Delta' : \UU \rightarrow \UU \ \widehat{\otimes} \ \UU
\end{equation}
be the new new topological coproduct defined in \cite{N New} (the definition is unambiguous, since the new new coproduct was shown in \emph{loc. cit.} to match the Drinfeld-Jimbo coproduct in finite types). We will not recall the specific completion necessary in \eqref{eqn:new}, but suffice it to say that it gives rise to a well-defined action
$$
V,W \in \CO \quad \leadsto \quad V \otimes W \in \CO
.$$
We have the following formula for all $i \in I$
\begin{equation} 
\label{eqn:delta prime}
\Delta'(\ph_i(z)) \in \ph_i(z) \otimes \ph_i(z) + \UUm \otimes \UUp 
\end{equation}
which was proved for finite type $\fg$ in \cite{Da0} and for arbitrary symmetrizable Kac-Moody $\fg$ in \cite{N New}. Equation \eqref{eqn:delta prime} implies that $\ph_i(z)$ acts on $V \otimes W$ in a block triangular fashion (with respect to $V_\bpsi \otimes W_{\bpsi'}$ for various $\ell$-weights $\bpsi,\bpsi'$), with $\ph_i(z) \otimes \ph_i(z)$ on the diagonal blocks. Thus, by the argument of \cite{FR}, we conclude that
\begin{equation}
\label{eqn:q-char multiplicative}
\chi_q(V \otimes W) = \chi_q(V) \cdot \chi_q(W)
\end{equation}
for all $V,W \in \CO$.

\subsection{Grothendieck rings}

Consider the Grothendieck rings $K_0(\mathcal{O})$, $K_0(\mathcal{O}^{\sh})$. The $q$-character morphisms for category $\mathcal{O}$ and for category $\mathcal{O}^{\sh}$ are injective and have the same image (the arguments in \cite[Section 9.1]{GHL} and \cite[Theorem 4.19]{W2} hold for arbitrary symmetrizable Kac-Moody Lie algebras). 

\begin{proposition} There is a natural isomorphism of topological rings
$$
I : K_0(\mathcal{O}) \ \xrightarrow{\sim} \ K_0(\mathcal{O}^{\esh}),
$$
which commutes with the $q$-character morphisms.
\end{proposition}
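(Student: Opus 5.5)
The idea is to define $I$ on classes of simple modules through the $q$-character, using the factorization \eqref{eqn:chi factor} together with Theorem \ref{thm:main intro}. The topological Grothendieck rings $K_0(\CO)$ and $K_0(\CO^{\sh})$ are spanned (via countable sums with the usual finiteness conditions on weights) by classes $[L(\bpsi)]$, respectively $[L^{\sh}(\bpsi)]$, as $\bpsi$ runs over rational $\ell$-weights. Both $q$-character maps are injective with the same image, as recalled just above. So there is a unique bijective, additive and continuous map $I$ compatible with $\chi_q$, namely $I = \chi_q^{\sh,-1} \circ \chi_q$. It is a ring map because both $\chi_q$'s are ring homomorphisms: for $\CO$ by \eqref{eqn:q-char multiplicative}, and for $\CO^{\sh}$ by the analogous block-triangularity of the fusion product of \cite{H Shifted}.

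The content of the proposition, and what I would make explicit, is the natural form of $I$ on simple classes. Combining \eqref{eqn:chi decomposition intro} with \eqref{eqn:main intro q-characters} gives
$$
\chi_q(L(\bpsi)) = \chi^{\br}\, \chi_q(L^{\sh}(\bpsi)), \qquad \br = \ord \bpsi .
$$
The factor $\chi^{\br}$ is an ordinary character, a product of terms $(1-[-\bn])^{-1}$ by \eqref{eqn:conj factor general}. It is therefore the $q$-character of an element of $K_0(\CO^{\sh})$ built from one-dimensional modules with constant $\ell$-weights $[-\bn]$. Such modules exist in $\CO^{\sh}$ (take $\bpsi$ constant, with order $\b0$), and constant $\ell$-weights multiply in both rings. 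Hence $I([L(\bpsi)]) = \chi^{\br}[L^{\sh}(\bpsi)]$, where $\chi^{\br}$ is read as this formal combination of one-dimensional classes.

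The steps, in order, are as follows. First, I would check that the classes $[L(\bpsi)]$ and $[L^{\sh}(\bpsi)]$ form topological bases and that $\chi_q$ is continuous in the relevant topology. Second, I would verify that the formula above defines a continuous bijection: it is triangular with respect to the ordering by weights, with leading coefficient $1$ since $\chi^{\br}$ has constant term $1$, so it is invertible by the standard recursion. Third, I would check multiplicativity by comparing $q$-characters and using injectivity.

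The main obstacle is the convergence issue. Products and infinite sums in $K_0(\CO^{\sh})$ must be well-defined, since modules need not have finite length and multiplicities are only defined topologically. One must also ensure that $\chi^{\br}$ times an element of $K_0(\CO^{\sh})$ still satisfies the weight-support conditions. I would handle this by noting that all weights involved lie in finitely many cones $\bom^s - \nn$, so each weight component is a finite computation. I would then deduce all identities componentwise from the equality of $q$-characters.
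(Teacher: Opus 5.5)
Your proposal is correct and follows the paper's implicit argument: $I=(\chi_q^{\sh})^{-1}\circ\chi_q$ is a well-defined isomorphism of topological rings because both $q$-character maps are injective, multiplicative, and have the same image. The formula $I([L(\bpsi)])=\chi^{\br}[L^{\sh}(\bpsi)]$ that you extract from \eqref{eqn:chi decomposition intro} and \eqref{eqn:main intro q-characters} is exactly the paper's subsequent Theorem \ref{isomgr}, and your unitriangularity step recovers the equality of the two images directly, without needing the cited external result.
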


Note that the multiplicative structure is defined in a different way for $\mathcal{O}$ and for  $\mathcal{O}^{\sh}$: for the first one using the new new coproduct $\Delta'$, while for the second one using the fusion procedure derived from the Drinfeld coproduct $\Delta$, see \cite{H Shifted}. It would be interesting to compare these two operations at the level of categories $\CO$.

\medskip 

Recall  the Grothendieck ring $\mathcal{E}$ of the subcategory of $\mathcal{O}$ consisting of modules with constant $\ell$-weights (see \cite[Section 2.3]{FH3} for instance). There is an analogous (and equivalent) subcategory in $\mathcal{O}^{\sh}$. Its
simple objects are the one-dimensional invertible representations $[\omega]$ parameterized by weights $\omega$. Thus, as in \cite[Section 9.7]{Kac}, we will regard elements of $\mathcal{E}$ as formal sums
\[
 c = \sum_{\omega\in \text{Supp}(c)} c(\omega)[\omega].
\]
The multiplication is given by $[\omega][\omega'] = [\omega+\omega']$ and $\mathcal{E}$
is regarded as a subring of $K_0(\mathcal{O})$ (resp. of $K_0(\mathcal{O}^{\sh})$).

\medskip 

Each $\chi^\mu$ of \eqref{eqn:conj factor general} can be seen as an element of $\mathcal{E}$, and we may consider the subring $A$ that they generate. Then, we can realize $A$ as a subring of $K(\mathcal{O})$ and of $K_0(\mathcal{O}^{\sh})$. This makes $K_0(\mathcal{O})$ and $K_0(\mathcal{O}^{\sh})$ into $A$-modules. The morphism $I$ is in fact an isomorphism of $A$-modules.

\medskip

Formula \eqref{eqn:chi decomposition intro} implies the following (recall that $\chi^\mu = \chi^{\br}$ with $\mu = \sum_{i \in I} r_i \omega_i^\vee$).

\begin{theorem}\label{isomgr} For $L(\bpsi)$ simple in $\mathcal{O}$ with corresponding shift $\mu$, we have
$$I([L(\bpsi)]) = \chi^{\mu} [L^{\esh}(\bpsi)].$$
In particular, $I$ is an isomorphism of rings preserving the bases of simple classes (up to invertible factors in $A$).
\end{theorem}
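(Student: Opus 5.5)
The plan is to prove the identity at the level of $q$-characters and then pull it back to the Grothendieck rings. By the statement preceding the Proposition, the $q$-character morphisms $\chi_q : K_0(\CO) \to \mathcal{Y}$ and $\chi_q^{\sh} : K_0(\CO^{\sh}) \to \mathcal{Y}$ are injective with the same image. Hence $I$ is defined by $I = (\chi_q^{\sh})^{-1} \circ \chi_q$, and an identity in $K_0(\CO^{\sh})$ can be checked after applying $\chi_q^{\sh}$. It therefore suffices to show
$$
\chi_q(L(\bpsi)) = \chi^{\mu} \cdot \chi_q(L^{\sh}(\bpsi)),
$$
where $\chi^\mu \in \mathcal{E}$ is regarded as an element of both rings. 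This uses that $I$ is $A$-linear, i.e. the two embeddings of $\mathcal{E}$ have the same $q$-characters, which holds since the simple objects $[\omega]$ have $q$-character $[\omega]$ on both sides.

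The identity follows in two steps. Formula \eqref{eqn:chi factor}, resting on \cite[Proposition 4.8]{N Char}, gives $\chi_q(L(\bpsi)) = \chi^{\br} \chi_q(L^{\neq 0}(\bpsi))$, where $\br = \ord \bpsi$ corresponds to $\mu$. Then \eqref{eqn:main intro q-characters} of Theorem \ref{thm:main intro} replaces $\chi_q(L^{\neq 0}(\bpsi))$ with $\chi_q(L^{\sh}(\bpsi))$. One point needs care: the enlarged Cartan datum $\bom$ of Remark \ref{rem:enlarge simple} must match on both sides. It does, because $\vac$ has the same $\kappa$-eigenvalue by \eqref{eqn:simple module relations 3} and \eqref{eqn:simple module shifted property}.

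For the second assertion, $I$ is a ring isomorphism because both multiplications are intertwined by $\chi_q$. In $\CO$ this is \eqref{eqn:q-char multiplicative}, and in $\CO^{\sh}$ it holds for the fusion product by \cite{H Shifted}; injectivity of $\chi_q$ then transports the product. The image of the simple class $[L(\bpsi)]$ is $\chi^\mu [L^{\sh}(\bpsi)]$, and $\chi^\mu = \prod (1-[-\bn])^{-\text{exponents}}$ is invertible in the completed ring $A$. As $\bpsi$ ranges over all rational $\ell$-weights, the $L^{\sh}(\bpsi)$ exhaust the simples of $\CO^{\sh}$, together with the $\bom$ choices. So the simple basis is carried to the simple basis up to these invertible factors.

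The main obstacle is topological. Simple classes only form a topological basis and $K_0$ is a completed ring, so one must check that $I$ is continuous and that $\chi^\mu$ is invertible in the relevant completion of $\mathcal{E}$. I would handle this by working with the weight-support conditions of Definition \ref{def:category o borel}, which are preserved by multiplication by $\chi^{\pm\mu}$.
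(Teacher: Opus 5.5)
Your proposal is correct and follows the paper's argument. The paper deduces the theorem directly from the factorization \eqref{eqn:chi factor} combined with the $q$-character equality \eqref{eqn:main intro q-characters} of Theorem \ref{thm:main intro}, with $I$ defined via the injective $q$-character morphisms that have a common image. Your additional checks (matching of $\bom$, $A$-linearity, multiplicativity of $\chi_q$ on both sides, and the topological caveats) spell out points the paper leaves implicit.
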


\subsection{Finite length}

Assume that $\fg$ is of finite type throughout the present subsection. Recall the following.

\begin{theorem}\label{fls}\cite{HZ} The subcategory $\mathcal{O}^{\esh}_f$ of modules of finite length in $\mathcal{O}^{\esh}$ is stable under fusion product when $\fg$ is of finite type.
\end{theorem}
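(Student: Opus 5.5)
The statement is Theorem~\ref{fls}: the finite-length subcategory $\CO^{\sh}_f$ is stable under fusion product in finite type. I would not attack it directly inside $\CO^{\sh}$. Instead I would transfer the problem to the Borel category $\CO$, where tensor products are more accessible. The transfer uses the isomorphism $I$ of Grothendieck rings and Theorem~\ref{isomgr}.

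\textbf{Reduction to simple factors.} A finite-length module has a finite Jordan--H\"older series. Fusion is exact in each factor, since it comes from the Drinfeld coproduct with a renormalization that does not depend on the module's internal structure. Hence it suffices to show that $L^{\sh}(\bpsi) * L^{\sh}(\bpsi')$ has finite length for rational $\bpsi, \bpsi'$.

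\textbf{Translation into $q$-characters.} By \eqref{eqn:decomposition x q character shifted}, the $q$-character of the fusion product is $\chi_q(L^{\sh}(\bpsi))\,\chi_q(L^{\sh}(\bpsi'))$. By Theorem~\ref{thm:main intro}, this equals $\chi_q(L^{\neq 0}(\bpsi))\,\chi_q(L^{\neq 0}(\bpsi'))$. Because the $q$-character morphism is injective, the multiplicities of simple constituents are determined by this product. Finite length of the fusion product is therefore equivalent to the following claim:
\begin{quote}
the product $\chi_q(L^{\neq 0}(\bpsi))\,\chi_q(L^{\neq 0}(\bpsi'))$ decomposes as a \emph{finite} sum $\sum_k c_k\, \chi_q(L^{\neq 0}(\bpsi_k))$ with $c_k \in \BZ_{\geq 0}$.
\end{quote}

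\textbf{Main obstacle and strategy for it.} Proving this finiteness is the real content, and it fails in $\CO$ itself, where tensor products can have infinite length. The only candidate highest $\ell$-weights $\bpsi_k$ are of the form $\bpsi\bpsi' \prod A_{i,x}^{-1}$. So one must bound the $\bn$ for which a new highest $\ell$-weight can occur.

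My first attempt would use the residue description \eqref{eqn:j x}. The spaces $L^{\neq 0}(\bpsi)_{\bx}$ are nonzero only for $\bx$ lying in a finite set of $q$-strings attached to the zeros and poles of $\bpsi$. The hope is that, for large $|\bn|$, all $\ell$-weights of the product are already dominated by the two factors.

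Failing that, I would argue via $q$-characters. Write $\bpsi = \bpsi_0 \cdot (\text{prefundamental factors})$ with $\bpsi_0$ having no pole at $0$. In finite type, $L^{\sh}$ of a negative prefundamental weight is one-dimensional, and positive prefundamentals have known characters. Then use the known finite length of tensor products of finite-dimensional modules with negative prefundamentals to reduce to products of positive prefundamental $q$-characters. Those products are simple up to the factor $\chi^\mu$ (the $\CO^{\sh}$ analogue of \cite{FH}).

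I expect the bookkeeping of the $\chi^\mu$ factors through Theorem~\ref{isomgr} to be the delicate step. Dividing out $\chi^{\mu}$ is exactly what converts the infinite-length phenomena in $\CO$ into finite sums in $\CO^{\sh}$.
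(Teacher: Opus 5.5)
The paper gives no proof of this statement. It imports it from \cite{HZ}, and its logic runs \emph{from} Theorem~\ref{fls}, combined with Theorem~\ref{isomgr}, \emph{to} the $\CO$-side results: the $A$-submodule spanned by simple classes is a subring, and $\overline{K}_0(\CO)\cong K_0(\CO^{\sh}_f)$.

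Your proposal has a genuine gap, at exactly the step that carries the content. The reduction to simple factors and the restatement in terms of $q$-characters are only reformulations. (Also, multiplicativity of $\chi_q$ under fusion comes from \cite{H Shifted}, not from \eqref{eqn:decomposition x q character shifted}.) The finiteness claim itself, which is the whole theorem, is left as a ``hope''.
\begin{itemize}
\item The residue description gives finitely many $\bx$ for each fixed $\bn$, but no bound on $\bn$.
\item In the finite-dimensional theory only dominant monomials can be highest $\ell$-weights of constituents. In $\CO^{\sh}$, by contrast, every rational $\ell$-weight is the highest $\ell$-weight of a simple module. So there is no a priori criterion cutting down the candidates $\bpsi\bpsi'\prod A_{i,x}^{-1}$, which is precisely why the theorem is non-trivial.
\item Passing to $\CO$ through $I$ cannot supply the missing bound. $I$ only rescales simple classes by the elements $\chi^\mu$, and tensor products in $\CO$ genuinely have infinite length, e.g.\ $L(\bpsi_{i,a})\otimes L(\bpsi_{j,b}^{-1})$. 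The finiteness statements available on the $\CO$ side are the ones the paper \emph{deduces} from Theorem~\ref{fls}, so invoking them would be circular.
\end{itemize}

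Your fallback strategy also rests on a reversed fact. With the paper's conventions, $\chi_q(L^{\neq 0}(\bpsi))=[\bpsi]$ whenever $\bpsi$ is a polynomial in $z^{-1}$. By Theorem~\ref{thm:main intro}, it is therefore the \emph{positive} prefundamental module $L^{\sh}(\bpsi_{i,a})$, with $\bpsi_{i,a}=(1,\dots,1-\frac az,\dots,1)$, that is one-dimensional. The negative one, $L^{\sh}(\bpsi_{i,a}^{-1})$, is infinite-dimensional.

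Consequently, ``reducing to products of positive prefundamentals'' lands in a trivial case: fusions of one-dimensional modules, with no $\chi^\mu$ bookkeeping at all. You also do not say how finite length of $V\otimes L^-$ would carry out such a reduction.

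The real difficulty is the fusion of a one-dimensional module $L^{\sh}(\Phi)$, with $\Phi$ a product of the $\bpsi_{i,a}$, with the infinite-dimensional negative-prefundamental-type factors. This is the shifted counterpart of the infinite-length tensor products in $\CO$. Showing that $[\Phi]$ times the $q$-character of such a factor is a finite sum of simple $q$-characters requires genuine information about $q$-characters of these modules in finite type. That is the content of \cite{HZ}, and it is absent from your proposal.
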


In other words, the subgroup of $K_0(\mathcal{O}^{\sh})$ generated by simple classes is a subring. It will be denoted by $K_0(\mathcal{O}^{\sh}_f)$. Note that the analogous statement would not be true in $\mathcal{O}$: for example, the tensor product of a positive prefundamental module $L(\bpsi_{i,a})$ and a negative prefundamental module $L(\bpsi_{j,b}^{-1})$ (of the quantum affine Borel algebra) is not of finite length. However, by our results, we obtain the following analog of the Jordan-H\"older property for $\mathcal{O}$.

\begin{theorem} The sub $A$-module of $K_0(\mathcal{O})$ generated by simple classes is a subring of $K_0(\mathcal{O})$.
\end{theorem}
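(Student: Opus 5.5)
The idea is to transport the statement through the ring isomorphism $I$ of Theorem \ref{isomgr} and use Theorem \ref{fls} on the shifted side. Let $M \subseteq K_0(\CO)$ be the sub $A$-module generated by the simple classes $[L(\bpsi)]$, and let $M^{\sh} \subseteq K_0(\CO^{\sh})$ be the sub $A$-module generated by the simple classes $[L^{\sh}(\bpsi)]$.

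\emph{Step 1.} Show that $I(M) = M^{\sh}$. By Theorem \ref{isomgr}, $I([L(\bpsi)]) = \chi^\mu [L^{\sh}(\bpsi)]$, and $I$ is $A$-linear, so $I(M) \subseteq M^{\sh}$. For the reverse inclusion, I would use that every rational $\bpsi$ of any order $\br$ indexes both a simple in $\CO$ and a simple in $\CO^{\sh}$, so each $[L^{\sh}(\bpsi)]$ arises this way. I would then need $\chi^\mu$ to be invertible in $A$, which is the step needing care. The factors $\chi^\mu$ are products of terms $(1-[-\bn])^{-1}$, so their inverses are finite products of terms $1-[-\bn]$, which lie in $\mathcal{E}$. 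If $A$ is taken to be the subring generated by the $\chi^\mu$ together with their inverses (consistent with the phrase ``invertible factors in $A$'' in Theorem \ref{isomgr}), then $[L^{\sh}(\bpsi)] = I((\chi^\mu)^{-1}[L(\bpsi)]) \in I(M)$. Otherwise, the inclusion $I(M) \subseteq M^{\sh}$ still suffices if I can show $M^{\sh}$ is closed under multiplication and that $I^{-1}(M^{\sh}) \cap \ldots$ behaves well; I would fall back to arguing $M = I^{-1}(M^{\sh})$ by restricting to $A$-spans.

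\emph{Step 2.} Show that $M^{\sh}$ is a subring. By Theorem \ref{fls}, a fusion product of two simple modules in $\CO^{\sh}_f$ has finite length. Its class is therefore a finite $\BZ$-combination of simple classes, so $[L^{\sh}(\bpsi)][L^{\sh}(\bpsi')] \in M^{\sh}$. Since $A \subseteq \mathcal{E}$ is central and multiplication is $A$-bilinear, $M^{\sh}$ is closed under products, and it contains $1 = [L^{\sh}(\mathbf{1})]$.

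\emph{Step 3.} Conclude. Since $I$ is a ring isomorphism, $M = I^{-1}(M^{\sh})$ is a subring of $K_0(\CO)$. Concretely, for simples in $\CO$ with shifts $\mu$ and $\mu'$,
\[
[L(\bpsi)][L(\bpsi')] = I^{-1}\big(\chi^\mu\chi^{\mu'}[L^{\sh}(\bpsi)][L^{\sh}(\bpsi')]\big) = \sum_k c_k\, \chi^{\mu}\chi^{\mu'}(\chi^{\mu_k})^{-1}[L(\bpsi_k)],
\]
where $\mu_k$ is the shift of $\bpsi_k$. This exhibits the product as an $A$-combination of simple classes. The main obstacle is the invertibility and bookkeeping of the factors $\chi^{\mu}\chi^{\mu'}(\chi^{\mu_k})^{-1}$ in $A$. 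Since shifts are additive on $\ell$-weight orders, $\mu_k = \mu + \mu'$, and one can also check that $\chi^\mu\chi^{\mu'}/\chi^{\mu+\mu'}$ lies in $A$.
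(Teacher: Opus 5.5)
Your argument is the route the paper intends: transport through $I$ via Theorem \ref{isomgr}, and use Theorem \ref{fls} on the shifted side. One step is not settled as written, namely the treatment of $(\chi^\mu)^{-1}$. If $A$ is taken literally as the subring generated by the $\chi^\mu$, these inverses need not lie in $A$. For $\mathfrak{sl}_2$ one gets $A=\BZ[t]$ with $t=(1-[-\alpha])^{-1}$ transcendental, so $t^{-1}\notin A$. Consequently:
\begin{itemize}
\item your reverse inclusion $M^{\sh}\subseteq I(M)$ in Step 1 is only conditional;
\item the ``fallback'' you sketch there is not an argument;
\item the final assertion that $\chi^\mu\chi^{\mu'}/\chi^{\mu+\mu'}\in A$ is unproved.
\end{itemize}
The paper's phrase ``invertible factors in $A$'' glosses over the same point, so your conditional reading matches it. You can, however, remove the condition.

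The fix is short and needs no assumption on $A$. Here $\fg$ is of finite type, so \eqref{eqn:conj factor} gives $(\chi^\nu)^{-1}=\prod_{\alpha\in\Delta_+}(1-[-\alpha])^{\max(0,(\nu,\alpha))}$. This is a \emph{finite} $\BZ$-combination of classes $[\omega]$ of one-dimensional modules. Moreover, $[\omega][L(\bpsi)]=[L(\omega\bpsi)]$ is again a simple class: tensoring with an invertible one-dimensional module preserves simplicity, or compare $q$-characters and use injectivity of $\chi_q$. Hence $(\chi^\nu)^{-1}[L(\bpsi)]$ lies in the $\BZ$-span of simple classes, and in particular in $M$. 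Since $I$ is multiplicative and is the identity on $\mathcal{E}$, this gives $[L^{\sh}(\bpsi)]=I\bigl((\chi^\mu)^{-1}[L(\bpsi)]\bigr)\in I(M)$. So $I(M)=M^{\sh}$ holds unconditionally, and your Steps 2 and 3 go through. Equivalently, your concrete Step 3 formula already finishes the proof. Each summand $c_k\,\chi^\mu\chi^{\mu'}(\chi^{\mu_k})^{-1}[L(\bpsi_k)]$ is an element of $A$ times a finite $\BZ$-combination of simple classes, hence lies in $M$. You need neither $\mu_k=\mu+\mu'$ nor any claim about the ratio lying in $A$.
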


Consider the subgroup $\overline{K}_0(\mathcal{O})$ of $K_0(\mathcal{O})$ generated by the $\chi^{-\mu}[L(\Psi)]$, as $\Psi$ runs over $\ell$-weights and $\mu = \sum_{i \in I} r_i\omega_i^\vee$ for $\br = \ord \bpsi$. We also obtain the following.

\begin{theorem} $\overline{K}_0(\mathcal{O})$ is a subring of $K_0(\mathcal{O})$ isomorphic to 
$K_0(\mathcal{O}^{\esh}_f)$.
\end{theorem}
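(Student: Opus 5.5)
The plan is to work entirely at the level of $q$-characters, using that the $q$-character morphism on $K_0(\CO)$ is injective (as recalled above) and multiplicative by \eqref{eqn:q-char multiplicative}. First, for every rational $\ell$-weight $\bpsi$ of order $\br$ (shift $\mu$), Theorem \ref{isomgr} together with \eqref{eqn:chi decomposition intro} and Theorem \ref{thm:main intro} gives
$$
\chi_q\big(\chi^{-\mu}[L(\bpsi)]\big) = \chi_q(L^{\neq 0}(\bpsi)) = \chi_q(L^{\esh}(\bpsi)).
$$
Here we use that $\chi^{\mu}$ is an invertible element of $\mathcal{E}$: its leading term is $1$ and all other terms are in negative directions $[-\bn]$, so the inverse makes sense in the completed ring. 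Thus $I$ restricts to a group isomorphism from $\overline{K}_0(\CO)$, the span of the $\chi^{-\mu}[L(\bpsi)]$, onto the span of the simple classes $[L^{\esh}(\bpsi)]$. I expect to check, via the same injectivity, that the latter span is exactly $K_0(\CO^{\esh}_f)$.

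The second step shows that $\overline{K}_0(\CO)$ is closed under multiplication. Since $I$ is a ring isomorphism commuting with $q$-characters, and by Theorem \ref{fls} $K_0(\CO^{\esh}_f)$ is a subring, we can expand
$$
[L^{\esh}(\bpsi)][L^{\esh}(\bpsi')] = \sum_{\bpsi''} m_{\bpsi''}[L^{\esh}(\bpsi'')]
$$
as a finite sum with integer coefficients. Applying $I^{-1}$ then expresses $\chi^{-\mu}[L(\bpsi)]\cdot\chi^{-\mu'}[L(\bpsi')]$ as the corresponding finite integral combination of the elements $\chi^{-\mu''}[L(\bpsi'')]$, so $\overline{K}_0(\CO)$ is a subring. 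The unit $[L(\bone)]$ has order $\b0$ and $\chi^{0}=1$, so it lies in $\overline{K}_0(\CO)$.

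The main subtlety is making sure the orders and shifts match. Each $\bpsi''$ occurring in the product must have order $\ord \bpsi + \ord \bpsi'$, so that $\mu''=\mu+\mu'$; I will confirm this from the fact that all $\ell$-weights of $L^{\esh}(\bpsi)$ lie in $[\bpsi]\cdot\mathbb{Z}[A_{i,x}^{-1}]$, and the $A_{i,x}^{-1}$ have order $\b0$. Multiplicativity of $\chi^{\mu}$ in $\mu$ is not needed: only $I^{-1}([L^{\esh}(\bpsi)]) = \chi^{-\mu}[L(\bpsi)]$ for each $\bpsi$ individually enters the argument. Finally, I must check that $I$ is compatible with the topological completions, so that a finite sum in $K_0(\CO^{\esh}_f)$ pulls back to a finite sum in $K_0(\CO)$, which is clear since $I$ is additive.
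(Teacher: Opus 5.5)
Your proposal is correct and follows essentially the argument the paper leaves implicit. Theorem \ref{isomgr} gives $I(\chi^{-\mu}[L(\bpsi)]) = [L^{\sh}(\bpsi)]$, so $\overline{K}_0(\mathcal{O}) = I^{-1}\big(K_0(\mathcal{O}^{\sh}_f)\big)$, and this is a subring because $I$ is a ring isomorphism and $K_0(\mathcal{O}^{\sh}_f)$ is a subring by Theorem \ref{fls}. Your additional checks on matching orders and on completions are harmless but not needed, as you already note.
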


Let us call $\overline{K}_0(\mathcal{O})$ the renormalized Grothendieck ring of the category $\mathcal{O}$. It allows to study the ring structures in the shifted and Borel cases on equal footing. For example, in simply-laced types, we can also reformulate the monoidal categorification conjecture of \cite{GHL} for $\mathcal{O}^{\sh}$ in terms of $\mathcal{O}$. It is proved in \emph{loc. cit.} that there is an embedding of a cluster algebra of infinite rank $\mathcal{A}$ into $K_0(\mathcal{O}^{\sh})$
$$i : \mathcal{A}\rightarrow K_0(\mathcal{O}^{\sh}).$$
Moreover, the closure of the image of $i$ is $K_0(\mathcal{O}^{\sh})$ (more precisely, an integral subcategory $\mathcal{O}^{\sh}_{\mathbb{Z}}$ 
should be used instead of $\mathcal{O}^{\sh}$, but we will abuse notation and use the same symbol). By the discussion above, we have also an embedding
$$I^{-1}\circ i : \mathcal{A}\rightarrow K_0(\mathcal{O}).$$
The main Conjecture of \cite{GHL} states that the cluster monomials in $\mathcal{A}$ should correspond to simple classes in $K_0(\mathcal{O}^{\sh})$.
This can now be reformulated as follows.

\begin{conjecture}  The image by $I^{-1}\circ i$ of the cluster monomials in $\mathcal{A}$ are simple classes up to a factor $\chi^\mu$ and belong to $\overline{K}_0(\mathcal{O})$.
\end{conjecture}

\subsection{$QQ$-systems for finite types}

Still assuming that $\fg$ is of finite type, recall the $QQ$-systems \cite{FH2} and their extended versions \cite{FH3} in the Grothendieck ring $K_0(\mathcal{O})\simeq K_0(\mathcal{O}^{\sh})$. This is a system of algebraic relations
\begin{equation}
\label{eqn:qq system}
\mathcal{Q}_{ws_i(\omega_i^\vee),aq_i} \mathcal{Q}_{w(\omega_i^\vee),aq_i^{-1}} - 
[-w(\alpha_i)]  \mathcal{Q}_{ws_i(\omega_i^\vee),aq_i^{-1}}\mathcal{Q}_{w(\omega_i^\vee),aq_i}
\end{equation}
$$
= [-w(\alpha_i)]_+ \prod_{j \neq i} \prod_{s \in \{c_{ij} + 1, c_{ij}+3,\dots,-c_{ij}-3,-c_{ij}-1\}} \mathcal{Q}_{w(\omega_j^\vee),aq_i^s}
$$
where variables $\mathcal{Q}_{w(\omega_i^\vee),a}$ depend on $i\in I$, $a\in\mathbb{C}^*$ and a Weyl group element $w$ (here we consider the Weyl group of the underlying Lie algebra; it is isomorphic to the Weyl group its Langlands dual Lie algebra). We also use the notation  $[-w(\alpha_i)]_+ = 1$ if $w(\alpha_i)\in\Delta_+$ and $[-w(\alpha_i)]_+ = - [-w(\alpha_i)]$ if $w(\alpha_i)\in \Delta_-$, where $\Delta_\pm$ refers to a choice of positive/negative roots of $\fg$.

\medskip

A solution of the $QQ$-system was constructed in \cite{FH3} in $K_0(\mathcal{O})\simeq K_0(\mathcal{O}^{\sh})$, as follows: let us recall the prefundamental $\ell$-weight
\begin{equation} 
\label{eqn:ell weight}
\bpsi_{\omega_i^\vee,a} = \bpsi_{i,a} =  \left(1,\dots,1,1- \frac az, 1, \dots, 1 \right) 
\end{equation}
with the non-trivial term situated on the $i$-th position. Although we will not recall the formula, for any Weyl group element $w \in W$ one can define an $\ell$-weight
\begin{equation} 
\label{eqn:w ell weight}
\bpsi_{w(\omega_i^\vee),a}
\end{equation}
as in \cite{FH3} (in a nutshell, $\bpsi_{w(\omega_i^\vee),a}$ is constructed in \cite{FH3} by using an extension of the Chari braid group action \cite{C}; the shift associated to this $\ell$-weight is  $w(\omega_i^\vee)$).

\begin{conjecture}
\label{conj:qq shifted}

\cite[Conjecture 6.11]{FH3} For any $i\in I$, $a\in\mathbb{C}^*$, $w\in W$ we have
$$
\mathcal{Q}_{w(\omega_i^\vee),a} = [L^{\esh}(\bpsi_{w(\omega_i^\vee),a})]
$$
in $K_0(\mathcal{O}^{\esh})$.
\end{conjecture}

The results of the previous section allow us to infer that Conjecture \ref{conj:qq shifted} is equivalent to the following.

\begin{conjecture} 
\label{conj:qq borel}

For any $i\in I$, $a\in\mathbb{C}^*$, $w\in W$ we have the relation 
$$
\mathcal{Q}_{w(\omega_i^\vee),a} = (\chi^{w(\omega_i^\vee)})^{-1}[L(\bpsi_{w(\omega_i^\vee),a})]
$$
in $\overline{K}_0(\mathcal{O})$. 
\end{conjecture}

Conjecture \ref{conj:qq borel} is a more precise formulation of the conjectural solutions of the $QQ$-system in terms of the representation theory of quantum affine Borel algebra. Also, the second part of \cite[Conjecture 6.8]{FH3} for the character of $L^{\sh}(\bpsi_{w(\omega_i^\vee),a})$ can be reformulated as the following. 
Recall the series $\chi_{w(\omega_i^\vee)}$ introduced in \cite{FH3}. 

\begin{conjecture}\label{charexp} For $w\in W$, $i\in I$ and $a\in\mathbb{C}^*$, we have
$$
\chi(L(\bpsi_{w(\omega_i^\vee),a})) = \chi^{w(\omega_i^\vee)} \chi_{w(\omega_i^\vee)}.
$$

\end{conjecture}

For $w = s_i$ a simple reflection, as we know the $q$-character of $L^{\sh}(\bpsi_{s_i(\omega_i^\vee),a})$ by \cite[Example 5.2]{H Shifted}, we obtain the $q$-character and the character of  $L(\bpsi_{s_i(\omega_i^\vee),a})$. In particular, we have
$$
\chi(L(\bpsi_{s_i(\omega_i^\vee),a})) = \chi^{s_i(\omega_i^\vee)}\frac{1}{1 - [-\alpha_i]}.
$$
Thus, Conjecture \ref{charexp} is established for $w=s_i$. Moreover, this allows to make all the constants precise for the $QQ$-system established in \cite{FH2}. Consider the simple classes
$$
Q_{i,a} = [L(\bpsi_{i,a})] \quad \text{and} \quad   
\tilde{Q}_{i,a} = [L(\tilde{\bpsi}_{i,aq_i^{-2}})]
$$
with the notation as in \cite{FH2}. Indeed, it is established in \emph{loc. cit.} that we have in $K_0(\mathcal{O})$ the relation
\begin{equation}\label{QQ}\tilde{Q}_{i,aq_i} Q_{i,aq_i^{-1}} - 
[-\alpha_i]  \tilde{Q}_{i,aq_i^{-1}}Q_{i,aq_i}\end{equation}
$$
= \chi \prod_{j \neq i} \prod_{s \in \{c_{ij} + 1, c_{ij}+3,\dots,-c_{ij}-3,-c_{ij}-1\}} Q_{w(\omega_j^\vee),aq_i^s} 
$$
for a constant $\chi$. 

\begin{theorem} The constant in the $QQ$-system (\ref{QQ}) is equal to
\begin{equation}\label{solc}\chi = \chi^{\omega_i^\vee}\chi^{s_i(\omega_i^\vee)}(\chi^{s_i(\omega_i^\vee) + \omega_i^\vee})^{-1}.\end{equation}
\end{theorem}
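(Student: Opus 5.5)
The plan is to transport the $QQ$-system to the shifted side via the isomorphism $I$ of Theorem \ref{isomgr}, where it holds with no constant, and then read off $\chi$.

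\textbf{Step 1 (shifts).} Apply $I$ to both sides of \eqref{QQ}. By Theorem \ref{isomgr}, $I([L(\bpsi)]) = \chi^{\mu}[L^{\sh}(\bpsi)]$, where $\mu$ is the coweight attached to $\ord \bpsi$. The order is additive under multiplication of $\ell$-weights, and the shift of a product of simple classes is the sum of the shifts. The needed shifts are:
\begin{itemize}
\item $Q_{i,a}=[L(\bpsi_{i,a})]$ has shift $\omega_i^\vee$;
\item $\tilde{Q}_{i,a}$ has shift $s_i(\omega_i^\vee) = \omega_i^\vee - \alpha_i^\vee$, since its $\ell$-weight is $\bpsi_{s_i(\omega_i^\vee),\bullet}$ up to a constant $\ell$-weight factor, which does not change the order;
\item the right-hand side $\prod_{j\neq i}\prod_s Q_{j,aq_i^s}$ has shift $\sum_{j\neq i}(-c_{ij})\omega_j^\vee$.
\end{itemize}
The last sum equals $2\omega_i^\vee-\alpha_i^\vee = \omega_i^\vee + s_i(\omega_i^\vee)$. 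Hence every term in \eqref{QQ} is a product of simple classes with total shift $\nu := \omega_i^\vee + s_i(\omega_i^\vee)$. This homogeneity check is the key point.

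\textbf{Step 2 (transport).} Write $Q^{\sh}_{i,a}=[L^{\sh}(\bpsi_{i,a})]$, and similarly $\tilde{Q}^{\sh}$. The constant factors $[-\alpha_i]$ are one-dimensional classes in $\mathcal{E}$, with shift $0$ and $\chi^0=1$, so they map to themselves. The image of \eqref{QQ} under the ring map $I$ is
\begin{equation*}
\chi^{\omega_i^\vee}\chi^{s_i(\omega_i^\vee)}\Big(\tilde Q^{\sh}_{i,aq_i}Q^{\sh}_{i,aq_i^{-1}} - [-\alpha_i]\tilde Q^{\sh}_{i,aq_i^{-1}}Q^{\sh}_{i,aq_i}\Big) = \chi \prod_{j\neq i}\prod_{s}\chi^{\omega_j^\vee} Q^{\sh}_{j,aq_i^s}.
\end{equation*}
For the right-hand side we use $\prod_{j\neq i}(\chi^{\omega_j^\vee})^{-c_{ij}} = \chi^{\nu}$. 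This holds because $\chi^\mu$ is multiplicative in dominant $\mu$, which follows from \eqref{eqn:conj factor} since $\max(0,(\mu,\alpha))$ is additive on dominant coweights.

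\textbf{Step 3 (compare).} By \cite{H Shifted} (the case $w=\mathrm{id}$ of \eqref{eqn:qq system}, known for $w=s_i$ and $w=\mathrm{id}$), the shifted system holds with constant $1$:
\begin{equation*}
\tilde Q^{\sh}_{i,aq_i}Q^{\sh}_{i,aq_i^{-1}} - [-\alpha_i]\tilde Q^{\sh}_{i,aq_i^{-1}}Q^{\sh}_{i,aq_i} = \prod_{j\neq i}\prod_s Q^{\sh}_{j,aq_i^s}.
\end{equation*}
Substituting this into the transported relation gives
\begin{equation*}
\chi^{\omega_i^\vee}\chi^{s_i(\omega_i^\vee)}\,P = \chi\,\chi^{\nu}\,P, \qquad P:=\prod_{j\neq i}\prod_s Q^{\sh}_{j,aq_i^s}.
\end{equation*}
Now $P\neq 0$, and the $q$-character is injective with image in a ring that is a domain. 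Therefore we may cancel $P$, which yields \eqref{solc}.

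\textbf{Main obstacle.} The delicate point is normalization. One must check that the convention of \cite{FH2} for $\tilde{\bpsi}_{i,a}$ (up to constant $\ell$-weights $[\omega]$) matches the shifted $\tilde{Q}$ of \cite{H Shifted} exactly, so that no extra constant $[\omega]$ appears. If a constant does appear, it should be absorbed consistently on both sides, since it multiplies both terms on the left.
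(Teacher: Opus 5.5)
Your proposal is correct and follows the paper's route. You transport \eqref{QQ} through $I$ via Theorem \ref{isomgr}, check that every term has total shift $\omega_i^\vee+s_i(\omega_i^\vee)$, and compare with the constant-free shifted $QQ$-relation of \cite{H Shifted}; your use of \eqref{eqn:conj factor} to get $\prod_{j\neq i}(\chi^{\omega_j^\vee})^{-c_{ij}}=\chi^{\omega_i^\vee+s_i(\omega_i^\vee)}$ is legitimate in finite type, and is consistent with the right-hand side being the class of the simple module $L(\prod_{j,s}\bpsi_{j,aq_i^s})$.

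The one loose point is the cancellation of $P$. Instead of appealing to a domain property of the $q$-character ring, use that $\chi_q(L^{\sh}(\bpsi))=[\bpsi]$ for $\bpsi$ polynomial in $z^{-1}$: then $\chi_q(P)=[\prod_{j,s}\bpsi_{j,aq_i^s}]$ is a single invertible monomial.
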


\subsection{$QQ$-systems for general types} Let us now assume that $\fg$ is an arbitrary symmetrizable Kac-Moody Lie algebra. We conjecture that all the formulas in the preceding Subsection remain valid in this new generality. Specifically, we write (see \eqref{eqn:a weight})
\begin{equation}
\label{eqn:a is ratio}
A_{i,a}^{-1} = \frac {\left[{\widetilde{\bpsi}_{i,aq^{-2}_i}\bpsi_{i,a}^{-1}}\right]}{\left[{\widetilde{\bpsi}_{i,a}\bpsi_{i,aq_i^2}^{-1}}\right]}
\end{equation}
where the $\ell$-weight 
$$
\tilde{\bpsi}_{i,a} \quad \text{has }j\text{-component} \quad \begin{cases} \prod_{s \in \{{c_{ij}+2,c_{ij}+4,\dots,-c_{ij}-2,-c_{ij}}\}} \left(1-\frac {aq_i^s}z \right) &\text{if } j\neq i \\ {\left(1-\frac{a}{z} \right)^{-1}} &\text{if }j=i \end{cases}
$$
and one can run the machinery of \cite{FH, FH2, FH3} without modifications. In particular, we obtain a generalization to general symmetrizable Kac-Moody Lie algebras of the main result of \cite{FH2}.

\begin{theorem}
\label{thm:qq} 

For a general symmetrizable Kac-Moody Lie algebra, we have a solution of the $QQ$-relation 
(\ref{QQ}) in $K_0(\mathcal{O})$ given by the simple classes $Q_{i,a} = [L(\bpsi_{i,a})]$ and 
$\tilde{Q}_{i,a} = [L(\tilde{\bpsi}_{i,aq_i^{-2}})]$. The constant $\chi$ is given by \eqref{solc}.

\end{theorem}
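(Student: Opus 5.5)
The plan is to follow the strategy of \cite{FH2} and use the machinery of this paper only to pin down the constant. The relation is first established for the shifted simple modules in $K_0(\CO^{\sh})$, where it holds with no renormalization factor, and is then transported to $K_0(\CO)$ via Theorem \ref{isomgr}.

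\textbf{Step 1: the shifted relation.} For arbitrary symmetrizable $\fg$, show that
\[
[L^{\sh}(\tilde{\bpsi}_{i,aq_i^{-1}})][L^{\sh}(\bpsi_{i,aq_i^{-1}})] - [-\alpha_i][L^{\sh}(\tilde{\bpsi}_{i,aq_i^{-3}})][L^{\sh}(\bpsi_{i,aq_i})] = \prod_{j \neq i} \prod_{s} [L^{\sh}(\bpsi_{j,aq_i^s})]
\]
holds in $K_0(\CO^{\sh})$, with the indices shifted as in (\ref{QQ}). Since the $q$-character map on $\CO^{\sh}$ is injective (the arguments of \cite{GHL, W2} apply in general type), it suffices to compare $q$-characters.

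By Theorem \ref{thm:main intro} together with \eqref{eqn:chi decomposition intro}, these $q$-characters equal $\chi_q(L^{\neq 0}(\bpsi))$. They can be computed directly from the residue description \eqref{eqn:j x} and \eqref{eqn:decomposition x q character}.

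For the prefundamental $\bpsi_{j,b}$, the function $\psi_j$ is a polynomial in $z^{-1}$. The residue conditions then kill everything in positive degree, so $\chi_q(L^{\sh}(\bpsi_{j,b})) = [\bpsi_{j,b}]$.

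For $\tilde{\bpsi}_{i,b}$, the only poles come from $(1-b/z)^{-1}$ in color $i$, together with the $\zeta$ factors. I expect to show that $L^{\sh}(\tilde{\bpsi}_{i,b})_{\bx}$ is one-dimensional exactly when $\bx = (b, bq_i^{-2}, \dots, bq_i^{-2(k-1)})$ in color $i$, and zero otherwise. This parallels \cite[Example 5.2]{H Shifted}, giving
\[
\chi_q(L^{\sh}(\tilde{\bpsi}_{i,b})) = [\tilde{\bpsi}_{i,b}] \sum_{k \geq 0} \prod_{t=0}^{k-1} A^{-1}_{i,bq_i^{-2t}}.
\]
Substituting these $q$-characters and using \eqref{eqn:a is ratio}, the left-hand side telescopes to the right-hand side, exactly as in \cite{FH2}.

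\textbf{Step 2: transport to $\CO$.} Apply $I^{-1}$ and use Theorem \ref{isomgr}, which gives $I^{-1}[L^{\sh}(\bpsi)] = (\chi^{\mu})^{-1}[L(\bpsi)]$. The relevant shifts are as follows:
\begin{itemize}
\item $\ord \bpsi_{i,b} = \bs^i$, so the shift is $\omega_i^\vee$;
\item $\tilde{\bpsi}_{i,b}$ has shift $s_i(\omega_i^\vee)$, since its order is $-\bs^i$ in color $i$ and $-c_{ij}$ in color $j$, that is, $\omega_i^\vee - \alpha_i^\vee$;
\item on the right-hand side, the product over $j \neq i$ of $-c_{ij}$ prefundamentals of color $j$ has total shift $\sum_{j \neq i} (-c_{ij})\omega_j^\vee = s_i(\omega_i^\vee) + \omega_i^\vee$.
\end{itemize}
Each term on the left acquires the same factor $(\chi^{\omega_i^\vee}\chi^{s_i(\omega_i^\vee)})^{-1}$, and the right-hand side acquires $(\chi^{s_i(\omega_i^\vee)+\omega_i^\vee})^{-1}$. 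Multiplying through by the left factor (these elements are invertible in $\mathcal E$) yields (\ref{QQ}) with $\chi$ as in \eqref{solc}. The products here are fusion products in $\CO^{\sh}$ and $\Delta'$-tensor products in $\CO$; $I$ is a ring map because both $q$-character maps are multiplicative, by \eqref{eqn:q-char multiplicative} and \cite{H Shifted}.

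\textbf{Main obstacle.} The hard part is the residue computation in Step 1 for $\tilde{\bpsi}_{i,b}$ in general symmetrizable type. The difficulty is showing that the shuffle algebra $\CS^-$, which is not explicitly known in general type, still gives exactly one-dimensional spaces on the $q_i^{-2}$-strings, and nothing in mixed colors.

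My first attempt would be to observe that only color-$i$ variables can produce residues, since the other components of $\tilde{\bpsi}_{i,b}$ are polynomial in $z^{-1}$. The computation then reduces to the rank-one subalgebra generated by $f_{i,d}$, whose shuffle algebra is explicit: symmetric Laurent polynomials times $\prod \zeta$. There the count is the standard $U_{q_i}(L\mathfrak{sl}_2)$ one.

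For mixed colors I would argue that any iterated residue that involves a color-$j$ variable needs a pole in that variable. Such a pole can only come from a $\zeta_{ij}$ factor, and I expect these to be cancelled by the wheel-type vanishing that every element of $\CS^-$ satisfies. If this cancellation fails, the fallback is to compare with the ordinary category $\CO$ argument of \cite{FH2}, which uses only \cite{FR}-type properties that extend to the Kac–Moody setting by \cite{N Cat}.
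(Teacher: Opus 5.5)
Your two-step structure is the paper's. You establish the relation without constant in $K_0(\CO^{\sh})$ by comparing $q$-characters and telescoping with \eqref{eqn:a is ratio}, which is the argument of \cite[Section 5.4]{H Shifted}. You then pull back along $I$ using Theorem \ref{isomgr}. Your shift bookkeeping is correct: $\omega_i^\vee$ for $\bpsi_{i,b}$, $s_i(\omega_i^\vee)=\omega_i^\vee-\alpha_i^\vee$ for $\tilde\bpsi_{i,b}$, and $\sum_{j\neq i}(-c_{ij})\omega_j^\vee=s_i(\omega_i^\vee)+\omega_i^\vee$ on the right. The last step identifies $\prod_{j\neq i}(\chi^{\omega_j^\vee})^{-c_{ij}}$ with $\chi^{s_i(\omega_i^\vee)+\omega_i^\vee}$, which the paper also does implicitly, and this reproduces \eqref{solc}. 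Where you depart from the paper is in deriving $\chi_q(L^{\sh}(\tilde\bpsi_{i,b}))$ from residues, instead of importing it from \cite[Example 5.2]{H Shifted}. The step you flag there is not closed by the mechanism you propose.

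Wheel conditions alone cannot kill the mixed-colour residues. Take $\bn=\bs^i+\bs^j$ with $c_{ij}\neq 0$ and write $F=\rho/(z_1-z_2)$. There is no wheel condition in this degree. In the ordering $(i,j)$, after $\operatorname{Res}_{z_1=b}$ the integrand has a simple pole at $z_2=bq_i^{-c_{ij}}$, coming from $1/\zeta_{ji}(z_2/z_1)$. Its residue is a nonzero multiple of $\rho(b,bq_i^{-c_{ij}})\,\psi_j(bq_i^{-c_{ij}})$. It vanishes only because the $j$-component of $\tilde\bpsi_{i,b}$ vanishes at $bq_i^{-c_{ij}}$; for $\bpsi_{i,b}^{-1}$, whose $j$-components are $1$, such mixed weight spaces really are nonzero.

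In general, once the colour-$i$ prefix is forced onto $b,bq_i^{-2},\dots,bq_i^{-2(m-1)}$, the first non-$i$ variable (of colour $j$) has simple poles at $bq_i^{-c_{ij}-2t}$ for $0\le t<m$. Those with $t<-c_{ij}$ are cancelled by the zeros of $\tilde\psi_j$ at $bq_i^{s}$, $s\in\{c_{ij}+2,\dots,-c_{ij}\}$. Only those with $t\ge -c_{ij}$ are cancelled by \eqref{eqn:wheel 1}, which does hold for all of $\CS^-$ in any symmetrizable type. This is because only then do $1-c_{ij}$ earlier colour-$i$ points form the required $q_i^2$-string. Also, a small correction: the colour-$i$ part of $\CS^-$ consists of all symmetric Laurent polynomials, with no $\zeta$ factors.

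A cleaner, type-independent alternative is what lets \cite{H Shifted} go through verbatim. Let $U$ be the colour-$i$ part of $L^{\sh}(\tilde\bpsi_{i,b})$. By the rank-one computation its $\ell$-weight spaces are one-dimensional, and by the same cancellation their $j$-components $\tilde\psi_j\prod_t (A^{-1}_{i,bq_i^{-2t}})_j$ are polynomials in $z^{-1}$. Hence the right-hand side of \eqref{eqn:rel 3 loop shifted} vanishes on $U$ for $j\neq i$. By induction on degree, every $f_{j,d}u$ with $u\in U$ is killed by all $e_{k,d'}$. Such a vector is therefore $0$ in the simple module, so $U=L^{\sh}(\tilde\bpsi_{i,b})$.
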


\begin{proof} We first prove the $QQ$-relation {\it without constant} in $K_0(\mathcal{O}^{\sh})$, 
with $Q_{i,a}$, $\tilde{Q}_{i,a}$ corresponding to simple representations. The proof is the same as in 
\cite[Section 5.4]{H Shifted}, provided the crucial relation (\ref{eqn:a is ratio}). Then we conclude 
using Theorem \ref{isomgr}.
\end{proof}


\begin{thebibliography}{XXX}

\bibitem{B}
Beck J.,
{\em Braid group action and quantum affine algebras},
Comm.\ Math.\ Phys.\ 165 (1994), no.~3, 555-568.

\bibitem{BS}
Burban I., Schiffmann O.,
{\em On the Hall algebra of an elliptic curve, I}
Duke Math. J. 161(7): 1171-1231.

\bibitem{C}
Chari V., 
{\em Braid group actions and tensor products}, 
Int. Math. Res. Not. 2002 (2002) 357-382.

\bibitem{CP94} 
Chari V. and Pressley A., 
{\em Quantum affine algebras and their representation}, 
CMS Conf. Proc. 16 (1994), 59--78.

\bibitem{CP}
Chari V., Pressley A.,
{\em A guide to quantum groups},
Cambridge University Press (1995).

\bibitem{Da0} Damiani I., 
{\em La R-matrice pour les algèbres quantiques de type affine non tordu}, 
Ann. Sci. Ecole Norm. Sup. 31 (1998), no. 4, 493-523.

\bibitem{Da} Damiani I., 
{\em From the Drinfeld realization to the Drinfeld-Jimbo presentation of affine quantum algebras : Injectivity}, Publ. Res. Inst. Math. Sci. 51 (2015), 131--171.

\bibitem{Da2} Damiani I., 
On the Drinfeld coproduct. Pure Appl. Math. Q. 20 (2024), no. 1, 171--232.

\bibitem{Dr}
Drinfeld V.,
{\em A new realization of Yangians and of quantum affine algebras}, 
(Russian) Dokl. Akad. Nauk SSSR 296 (1987), no. 1, 13–17; translation in Soviet Math. Dokl. 36 (1988), no. 2, 212–216.

\bibitem{E}
Enriquez B.,
{\em On correlation functions of Drinfeld currents and shuffle algebras}, 
Transform.\ Groups 5 (2000), no.~2, 111-120.

\bibitem{FF} 
Feigin B., Frenkel E., 
{\em Quantization of soliton systems and Langlands duality}, in  Adv. Stud. Pure Math. 61, 185-274
  Math. Soc. Japan, Tokyo, 2011.

\bibitem{FJMM} 
Feigin B., Jimbo M., Miwa T. and Mukhin E., 
{\em Finite type modules and Bethe Ansatz equations},
Ann. Henri Poincar\'e 18 (2017) 2543--2579.

\bibitem{FO}
Feigin B., Odesskii A.,
{\em Quantized moduli spaces of the bundles on the elliptic curve and their applications}, 
NATO Sci. Ser. II Math. Phys. Chem., 35, 123-137, Kluwer Acad. Publ., Dordrecht, 2001.	

\bibitem{FT} 
Finkelberg M., Tsymbaliuk A., 
{\em Multiplicative slices, relativistic Toda and shifted quantum affine algebras}, in Progr. Math. 330 (2019), 133--304.

\bibitem{FH}
Frenkel E., Hernandez D.,
{\em  Baxter’s relations and spectra of quantum integrable models}, 
Duke Math. J. 164 (2015), no. 12, 2407-2460.

\bibitem{FH2}
Frenkel E., Hernandez D.,
{\em  Spectra of quantum KdV Hamiltonians, Langlands duality, and affine opers}, 
Comm. Math. Phys. 362 (2018), no. 2, 361--414.

\bibitem{FH3}
Frenkel E., Hernandez D.,
{\em  Extended Baxter relations and QQ-systems for quantum affine algebras}, 
Comm. Math. Phys. 405 (2024), 190.

\bibitem{FM}
Frenkel E., Mukhin E.,
{\em Combinatorics of $q$-characters of finite-dimensional representations of quantum affine algebras}, 
Commun. Math. Phys. 216, 23-57 (2001).

\bibitem{FR}
Frenkel E., Reshetikhin N.,
{\em The $q$-characters of representations of quantum affine algebras and deformations of $W$-Algebras}, 
in Recent Developments in Quantum Affine Algebras and related topics, Contemp. Math. 248 (1999), 163-20.

\bibitem{GHL}
Geiss C., Hernandez D., Leclerc B.,
{\em Representations of shifted quantum affine algebras and cluster algebras I. The simply-laced case}, 
 Proc. Lond. Math. Soc. (3) 129 (2024), no. 3, Paper No. e12630.

\bibitem{HTG} Hernandez D., {\em Representations of quantum affinizations and fusion product}, 
Transform. Groups 10 (2005), no. 2, 163--200.

\bibitem{HLMS} 
Hernandez D., 
{\em Drinfeld coproduct, quantum fusion tensor category and applications}, 
Proc. Lond. Math. Soc. (3) 95 (2007), no. 3, 567--608.

\bibitem{H Shifted}
Hernandez D.,
{\em Representations of shifted quantum affine algebras},
Int. Math. Res. Not., Vol. 2023, No. 13, pp. 11035-11126.

\bibitem{H25} Hernandez D., 
{\em Representations and characters of quantum affine algebras at the crossroads between cluster categorification and quantum integrable models}, 
to appear in Proccedings of the ICM 2026 (arXiv:2510.06437).

\bibitem{HJ}
Hernandez D., Jimbo M.,
{\em Asymptotic representations and Drinfeld rational fractions}, 
Comp. Math. 2012; 148(5):1593-1623.

\bibitem{HL Borel}
Hernandez D., Leclerc B.,
{\em Cluster algebras and category $\CO$ for representations of Borel subalgebras of quantum affine algebras},
Algebra Number Theory 10(9): 2015-2052 (2016).

\bibitem{HZ}
Hernandez D., Zhang H.,
{\em Jordan-Hölder property for shifted quantum affine algebras}, Preprint arXiv:2501.16859.

\bibitem{Kac}
Kac V., 
{\em Infinite dimensional Lie algebras}, Third Edition, Cambridge University Press, Cambridge, New York, 1990.

\bibitem{mrv}
Masoero D., Raimondo A., Valeri D., 
{\em Bethe Ansatz and the Spectral Theory of affine Lie algebra valued connections}, 
Commun. Math. Phys. 344 (2016) 719--750.

\bibitem{MY}
Mukhin E., Young C., 
{\em Affinization of category $\CO$ for quantum groups}, 
Trans. Amer. Math. Soc. 366 (2014), no. 9, 4815-4847.

\bibitem{N Loop}
Negu\cb{t} A.,
{\em Quantum loop groups for symmetric Cartan matrices}, 
J. Reine Angew. Math. (2026).

\bibitem{N Arbitrary}
Negu\cb{t} A.,
{\em Quantum loop groups for arbitrary quivers}, 
From representation theory to mathematical physics and back, 287–324. Contemp. Math., 817, American Mathematical Society, Providence RI, 2025.

\bibitem{N Cat}
Negu\cb{t} A.,
{\em Category $\CO$ for quantum loop algebras},
ar$\chi$iv:2501.00724.

\bibitem{N Char}
Negu\cb{t} A.,
{\em Characters of quantum loop algebras},
ar$\chi$iv:2503.17518.

\bibitem{N New}
Negu\cb{t} A.,
{\em A new new coproduct on quantum loop algebras},
ar$\chi$iv:2602.01130.

\bibitem{NT}
Negu\cb{t} A., Tsymbaliuk A.,
{\em Quantum loop groups and shuffle algebras via Lyndon words}, 
Adv.\ Math.\ 439 (2024), 109482, 69 pp.

\bibitem{W2}
Wang K.
{\em QQ-systems for twisted quantum affine algebras}, 
Commun. Math. Phys. 400 (2023), 1137--1179.

\bibitem{W1}
Wang K.,
{\em Weyl group twists and representations of quantum affine Borel algebras},
Algebr. Represent. Theor. (2025).



\end{thebibliography}
\end{document}